\theoremstyle{definition}
\newtheorem{theorem}{Theorem}
\newtheorem*{theorem*}{Theorem}
\newtheorem{definition}[theorem]{Definition}
\newtheorem*{definition*}{Definition}
\newtheorem{prop}[theorem]{Proposition}
\newtheorem{lemma}[theorem]{Lemma}
\newtheorem{cor}[theorem]{Corollary}
\newtheorem{example}[theorem]{Example}
\newtheorem{rem}[theorem]{Remark}
\newtheorem{notation}[theorem]{Notation}
\numberwithin{theorem}{section}
\numberwithin{equation}{section}
\newcommand{\dashededgea}[0]{
\begin{tikzpicture}[x=0.75pt,y=0.75pt,yscale=0.3,xscale=0.3, baseline=-3pt] 

\draw [dash pattern={on 4pt off 3pt}]  (10,0)--(90,0);

\draw  (0,0) circle (10);
\draw  (100,0) circle (10);
\end{tikzpicture}}
\newcommand{\dashededgeb}[0]{
\begin{tikzpicture}[x=0.75pt,y=0.75pt,yscale=0.3,xscale=0.3, baseline=-3pt] 

\draw [dash pattern={on 4pt off 3pt}]   (0,0)--(90,0);

\draw  [fill={rgb, 255:red, 0; green, 0; blue, 0 }  ,fill opacity=1 ] (0,0) circle (10);
\draw  [fill={rgb, 255:red, 0; green, 0; blue, 0 }  ,fill opacity=1 ] (100,0) circle (10);
\end{tikzpicture}}
\newcommand{\dashededgec}[0]{
\begin{tikzpicture}[x=0.75pt,y=0.75pt,yscale=0.3,xscale=0.3, baseline=-3pt] 

\draw [dash pattern={on 4pt off 3pt}]   (10,0)--(90,0);

\draw  (0,0) circle (10);
\draw  [fill={rgb, 255:red, 0; green, 0; blue, 0 }  ,fill opacity=1 ] (100,0) circle (10);
\end{tikzpicture}}
\newcommand{\solidedge}[0]{
\begin{tikzpicture}[x=0.75pt,y=0.75pt,yscale=0.3,xscale=0.3, baseline=-3pt] 

\draw   (0,0)--(90,0);

\draw  [fill={rgb, 255:red, 0; green, 0; blue, 0 }  ,fill opacity=1 ]  (0,0) circle (10);
\draw  [fill={rgb, 255:red, 0; green, 0; blue, 0 }  ,fill opacity=1 ] (100,0) circle (10);
\end{tikzpicture}}
\newcommand{\graphd}[0]{
\begin{tikzpicture}[x=1pt,y=1pt,yscale=-0.3,xscale=0.4,baseline=10pt, line width = 1pt]

\draw  [fill= {rgb, 255:red, 0; green, 0; blue, 0 }  ,fill opacity=1 ] (-50, -150) circle (5);
\node at (-65,-150) {$6$};
\draw  [fill= {rgb, 255:red, 0; green, 0; blue, 0 }  ,fill opacity=1 ] (-50, -50) circle (5);
\node at (-65,-50) {$5$};
\draw  [fill= {rgb, 255:red, 0; green, 0; blue, 0 }  ,fill opacity=1 ] (-50, 50) circle (5);
\node at (-65,50) {$4$};
\draw  [fill= {rgb, 255:red, 0; green, 0; blue, 0 }  ,fill opacity=1 ] (50, -150) circle (5);
\node at (65,-150) {$3$};
\draw [fill= {rgb, 255:red, 0; green, 0; blue, 0 }  ,fill opacity=1 ] (50, -50) circle (5);
\node at  (65,-50) {$2$};
\draw [fill= {rgb, 255:red, 0; green, 0; blue, 0 }  ,fill opacity=1 ] (50, 50) circle (5);
\node at  (65,50) {$1$};

\draw [-Stealth] (-50,50) -- (-50,-50);
\draw [-Stealth] (-50,-50) -- (-50,-150);

\draw [-Stealth] (50,50) -- (50,-50);
\draw [-Stealth] (50,-50) -- (50,-150);

\draw [-Stealth]  [dash pattern={on 4pt off 3pt}]  (50,50) -- (-50,50);
\draw [-Stealth] [dash pattern={on 4pt off 3pt}]  (50,-50) -- (-50,-50);
\draw [-Stealth]  [dash pattern={on 4pt off 3pt}]  (50,-150) -- (-50,-150);

\end{tikzpicture}}
\newcommand{\graphe}[0]{
\begin{tikzpicture}[x=1pt,y=1pt,yscale=-0.3,xscale=0.4,baseline=10pt, line width = 1pt]

\draw  [fill= {rgb, 255:red, 0; green, 0; blue, 0 }  ,fill opacity=1 ] (-50, -150) circle (5);
\node at (-65,-150) {$4$};
\draw  [fill= {rgb, 255:red, 0; green, 0; blue, 0 }  ,fill opacity=1 ] (-50, -50) circle (5);
\node at (-65,-50) {$5$};
\draw  [fill= {rgb, 255:red, 0; green, 0; blue, 0 }  ,fill opacity=1 ] (-50, 50) circle (5);
\node at (-65,50) {$6$};
\draw  [fill= {rgb, 255:red, 0; green, 0; blue, 0 }  ,fill opacity=1 ] (50, -150) circle (5);
\node at (65,-150) {$3$};
\draw [fill= {rgb, 255:red, 0; green, 0; blue, 0 }  ,fill opacity=1 ] (50, -50) circle (5);
\node at  (65,-50) {$2$};
\draw [fill= {rgb, 255:red, 0; green, 0; blue, 0 }  ,fill opacity=1 ] (50, 50) circle (5);
\node at  (65,50) {$1$};

\draw [Stealth-] (-50,50) -- (-50,-50);
\draw [Stealth-] (-50,-50) -- (-50,-150);

\draw [-Stealth] (50,50) -- (50,-50);
\draw [-Stealth] (50,-50) -- (50,-150);

\draw [-Stealth]  [dash pattern={on 4pt off 3pt}]  (50,50) -- (-50,-50);
\draw [-Stealth] [dash pattern={on 4pt off 3pt}]  (50,-50) -- (-50,50);
\draw [-Stealth]  [dash pattern={on 4pt off 3pt}]  (50,-150) -- (-50,-150);

\end{tikzpicture}}
\newcommand{\graphg}[0]{
\begin{tikzpicture}[x=1pt,y=1pt,yscale=-0.2,xscale=0.4,baseline=10pt, line width = 0.7pt]

\draw  [fill= {rgb, 255:red, 0; green, 0; blue, 0 }  ,fill opacity=1 ] (0, -170) circle (5);
\draw  [fill= {rgb, 255:red, 0; green, 0; blue, 0 }  ,fill opacity=1 ] (0, 75) circle (5);

\draw  (-50, -50) circle (5);
\draw  (0, -100) circle (5);
\draw  (50, -50) circle (5);
\draw  (0, 0) circle (5);

\draw  [dash pattern={on 4pt off 3pt}]  (0,75) -- (0,5);
\draw [dash pattern={on 4pt off 3pt}]  (45,-50) -- (-45,-50);
\draw  [dash pattern={on 4pt off 3pt}]  (0,-170) -- (0,-105);

\draw   [dash pattern={on 4pt off 3pt}]  (-45,-50) -- (-5,-95);
\draw [dash pattern={on 4pt off 3pt}]  (5,-95) -- (45,-50);
\draw  [dash pattern={on 4pt off 3pt}]  (45,-45) -- (5,0);
\draw  [dash pattern={on 4pt off 3pt}]  (-5,0) -- (-45,-45);

\end{tikzpicture}}
\newcommand{\chorddiagrama}[0]{
\begin{tikzpicture}[x=1pt,y=1pt,yscale=0.2,xscale=0.3,baseline=20pt, line width = 1pt]
\draw [color={rgb, 255:red, 0; green, 0; blue, 255 }, line width=1pt]  [-Stealth] (0,-100)--(0, 400);

\draw [color={rgb, 255:red, 0; green, 0; blue,0 }, line width=1pt]   [Stealth-] (0,300)..controls (100,200)..(0,100) ;
\node at (120,200) {$(2)$};
\draw [color={rgb, 255:red, 0; green, 0; blue,0 }, line width=1pt]   [-Stealth] (0,0)..controls (100,100)..(0,200) ;
\node at (120,100) {$(1)$};

\draw  [fill={rgb, 255:red, 0; green, 0; blue, 0 }  ,fill opacity=1 ] (0,0) circle (5) ;
\node at (-20,0) {$O$};
\draw  [fill={rgb, 255:red, 0; green, 0; blue, 0 }  ,fill opacity=1 ] (0,100) circle (5) ;
\node at (-20,100) {};
\draw [fill={rgb, 255:red, 0; green, 0; blue, 0 }  ,fill opacity=1 ] (0,200) circle (5) ;
\node at (-20,200) {};
\draw [fill={rgb, 255:red, 0; green, 0; blue, 0 }  ,fill opacity=1 ] (0,300) circle (5);
\node at (-20,300) {};
\end{tikzpicture}}
\newcommand{\chorddiagramb}[0]{
\begin{tikzpicture}[x=1pt,y=1pt,yscale=0.2,xscale=0.3,baseline=20pt, line width = 1pt]
\draw [color={rgb, 255:red, 0; green, ; blue, 255 }, line width=1pt]  [-Stealth] (0,-100)--(0, 400);
\draw [color={rgb, 255:red, 0; green, 0; blue, 255 }, line width=1pt]  [-Stealth] (200,-100)--(200, 400);
\draw [color={rgb, 255:red, 0; green, 0; blue,0 }, line width=1pt]   [Stealth-] (0,300)--(200,0);
\node at (160,300) {$(1)$};
\draw [color={rgb, 255:red, 0; green, 0; blue,0 }, line width=1pt]   [-Stealth] (0,0)--(200,300) ;
\node at (40,300) {$(3)$};
\draw [color={rgb, 255:red, 0; green, 0; blue,0 }, line width=1pt]   [-Stealth] (0,150)--(200,150) ;
\node at (40,180) {$(2)$};

\draw  [fill={rgb, 255:red, 0; green, 0; blue, 0 }  ,fill opacity=1 ] (0,0) circle (5) ;
\node at (-20,0) {$O$};
\draw  [fill={rgb, 255:red, 0; green, 0; blue, 0 }  ,fill opacity=1 ] (0,150) circle (5) ;
\draw [fill={rgb, 255:red, 0; green, 0; blue, 0 }  ,fill opacity=1 ] (0,300) circle (5);

\draw  [fill={rgb, 255:red, 0; green, 0; blue, 0 }  ,fill opacity=1 ] (200,0) circle (5) ;
\node at (220,0) {$O$};
\draw  [fill={rgb, 255:red, 0; green, 0; blue, 0 }  ,fill opacity=1 ] (200,150) circle (5) ;
\draw [fill={rgb, 255:red, 0; green, 0; blue, 0 }  ,fill opacity=1 ] (200,300) circle (5);

\end{tikzpicture}}
\begin{document} 
\title{Cocycles of the space of long embeddings and BCR graphs with more than one loop}
\author{Leo Yoshioka}
\address{Graduate School of Mathematical Sciences, The University of Tokyo}
\email{yoshioka@ms.u-tokyo.ac.jp}

\begin{abstract}
The purpose of this paper is to construct non-trivial cocycles of the space $\text{Emb} (\mathbb{R}^j, \mathbb{R}^{n})$ of long embeddings. We construct the cocycles through configuration space integrals, associated with Bott-Cattaneo-Rossi graphs with more than one loop. As an application, we give a non-trivial cycle of long embeddings in the path component of the trivial long embedding, for odd $n,j$ with $n-j \geq 2$ and $j \geq 3$. This cycle is constructed from a chord diagram on directed lines. The non-triviality is shown by cocycle-cycle pairing, described by pairing between graphs and chord diagrams.
\end{abstract}

\maketitle


\tableofcontents

\part*{Introduction}
\addcontentsline{toc}{part}{Introduction}

A long embedding is an embedding of $\mathbb{R}^j$ into $\mathbb{R}^n$, which is standard outside a disk in $\mathbb{R}^j$. A  long $n$-knot is a  long embedding $\mathbb{R}^{n} \rightarrow \mathbb{R}^{n+2}$. In this paper, we produce some geometric non-trivial cocycles of the space $\text{Emb}(\mathbb{R}^j, \mathbb{R}^{n})$ of long embeddings through integrals associated with graphs. This kind of approach, which we call configuration space integrals, has its quantum field theoretical origin in E. Witten's work \cite{Wit} and its mathematical formulation has been developed by M. Kontsevich \cite{Kon}, D. Bar-Natan \cite{Bar}, R. Bott \cite{Bot}, C. Taubes \cite{BT}, G. Kuperberg, D. Thurston \cite{KT}, A. S. Cattaneo, C. A. Rossi \cite{CR} and some others from the 1990s to 2000s. Recently, this approach attracted much attention thanks to T. Watanabe \cite{Wat 5}, who in 2018 disproved the 4-dimensional Smale conjecture based on a series of his works \cite{Wat 2, Wat 3, Wat 4}.

Bott \cite{Bot}, Cattaneo and Rossi \cite{CR} defined 1-loop graphs (graphs with $b_1 = 1$) with two types of edges and vertices. We call these graphs BCR graphs. Using specific linear combinations of BCR graphs called graph cocycles, they defined configuration space integral invariants for long $n$-knots. K. Sakai and Watanabe \cite{Sak 2, SW} generalized their construction and systematically gave non-trivial higher cocycles of $\text{Emb}(\mathbb{R}^j, \mathbb{R}^{n})$ for $n-j\geq 3$ from BCR graph cocycles.

The graphs used in this paper are similar to theirs, but our graphs have more than one loop ($b_1 \geq 2$). Using such higher-loop graphs plays an essential role in getting cocycles of higher degree when the codimension $n-j$ is exactly two. The framework of applying higher-loop  BCR graphs was already considered by Sakai \cite{Sak 2}. However, no non-trivial cocycle corresponding to higher-loop graphs has been given so far. 

In this paper, we show the following. (See Section \ref{Main Result}.)
\begin{theorem}
Let $n$, $j$ be odd and satisfy $n-j\geq 2$, $ j\geq 3$. Then the linear combination $H$ of $2$-loop BCR graphs gives an explicit non-trivial $3n-2j-7$ (co)cycle of $\overline{\text{Emb}}(\mathbb{R}^j, \mathbb{R}^{n})$, the space of long embeddings modulo immersions. 
\end{theorem}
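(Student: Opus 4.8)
The plan is to realize $H$ as a configuration space integral over a bundle of compactified configuration spaces and to prove closedness by the generalized Stokes theorem, reducing the whole computation to a coboundary in the BCR graph complex together with the vanishing of the anomalous faces; the non-triviality is then extracted by an explicit cocycle--cycle pairing against the chord-diagram cycle. First I would attach to each $2$-loop BCR graph $\Gamma$ occurring in $H$ an integral $I(\Gamma)$, defined as integration along the fiber of the bundle over $\overline{\text{Emb}}(\mathbb{R}^j,\mathbb{R}^n)$ whose fiber over $f$ is a Fulton--MacPherson/Axelrod--Singer compactification $C_\Gamma(f)$ of the space of configurations of the external vertices (on the image of $f$) and internal vertices (in $\mathbb{R}^n$), with each edge decorated by the pullback of the rotation-invariant volume form on $S^{n-1}$ along the appropriate direction map. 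A standard degree count, in which each edge contributes its propagator degree while each internal vertex costs $n$ and each external vertex costs $j$ of fiber dimension, shows that $I(H)$ is a form of the claimed degree $3n-2j-7$.

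For closedness I would apply $d\,I(\Gamma)=\int_{\partial(\mathrm{fiber})}\bigwedge_{\mathrm{edges}}(\mathrm{propagator})=\sum_F I(\partial_F\Gamma)$, the sum running over the codimension-one boundary faces $F$ of the compactification. The \emph{principal faces}, where exactly two vertices collide or a single internal vertex escapes to infinity, reassemble into $I(\delta H)$, where $\delta$ is the coboundary operator on BCR graphs; choosing $H$ to be a graph cocycle, so that $\delta H=0$, kills this contribution. Passing to the quotient by immersions is what removes the global tangential anomaly (the face in which all external vertices collapse along the knot), so no correction term survives there. It then remains to show that every \emph{hidden} (anomalous) face — where three or more vertices collide, or a connected subgraph collapses to a point — contributes zero; here I would combine dimension counting, the vanishing of integrals that are odd under the $S^{n-1}$-symmetry of the propagator, sign-reversing involutions on the collapsing subgraphs, and the parity constraints forced by $n$ and $j$ both being odd.

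For non-triviality I would build the cycle $Z$ as a family of long embeddings parametrized by the prescribed chord diagram on directed lines, and evaluate the pairing $\langle I(H),[Z]\rangle$. By a standard-position/degeneration argument the integral localizes to the finitely many configurations in which the graph $\Gamma$ maps combinatorially onto the chord diagram, so the pairing becomes a finite signed count of such graph-to-chord-diagram maps. I would organize this count as the pairing between the graph part $H$ and the chord diagram, verify that the signs are consistent, and check that the total is nonzero; this shows simultaneously that $[Z]\neq 0$ and that the cohomology class of $I(H)$ is nontrivial.

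The hard part will be the vanishing of the hidden faces in the $2$-loop, codimension-two setting. For $n-j\ge 3$ these faces die by a crude dimension count, but at codimension exactly two the count is sharp, and the extra loop produces new collapsing configurations not present in the one-loop theory of Bott--Cattaneo--Rossi and Sakai--Watanabe. Controlling precisely these faces — isolating which ones survive the dimension bound and then cancelling them in pairs by involutions or forcing them to vanish by the $S^{n-1}$-symmetry and the oddness of $n,j$ — is the technical heart of the argument, and is exactly the point at which previous attempts to use higher-loop graphs stalled.
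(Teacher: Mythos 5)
Your overall architecture (configuration space integral, Stokes' theorem, principal faces cancelling against $\delta H=0$, symmetry/involution arguments for hidden faces, and a localized graph-to-chord-diagram count for the pairing) matches the paper's. But there is a genuine gap in your treatment of the anomalous face. You assert that ``passing to the quotient by immersions is what removes the global tangential anomaly\dots so no correction term survives there.'' That is not what happens, and it is not true for this $H$: the anomalous face contribution is $\pi_{\ast}D^{\ast}b_{\ast}\omega(H)$, the pullback under the differential map $D$ of a closed form living on $\text{Inj}(\mathbb{R}^j,\mathbb{R}^n)\simeq V_{n,j}$, and for $2$-loop graphs (even first Betti number) the symmetry arguments that kill it in the $1$-loop case do not apply. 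Working on $\overline{\text{Emb}}(\mathbb{R}^j,\mathbb{R}^n)$ does not make this face vanish; what it buys you is that the path of immersions lets you integrate over $[0,1]$ and build an explicit correction term $\overline{c}(H)=\pm\,\pi_{\ast}{p_{23}}_{\ast}\overline{D}^{\ast}b_{\ast}\omega(H)$, so that the closed form is $r^{\ast}I(H)+\overline{c}(H)$ rather than $r^{\ast}I(H)$ alone. This forces a further step you omit in the non-triviality argument: you must show the correction term pairs to zero with the cycle, which the paper does by comparing $\overline{\psi}$ with a fully resolved, null-homotopic cycle $\overline{\psi}_0$ having the same differential data up to a small-support isotopy, yielding $(r^{\ast}I(H)+\overline{c}(H))(\overline{\psi})=I(H)(\psi)$. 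Without these two steps your form is not closed and your pairing computation does not evaluate the right class.

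Two smaller points. First, BCR graphs carry two kinds of edges: dashed edges pull back $\omega_{S^{n-1}}$ via direction maps in the target, but solid edges pull back $\omega_{S^{j-1}}$ via direction maps in the source $\mathbb{R}^j$; decorating every edge with an $S^{n-1}$ propagator, as you write, would not produce the degree $k(n-j-2)+(g-1)(j-1)=3n-2j-7$. Second, for the hidden faces the decisive mechanism at codimension two is not a sharp dimension count but the $(n,j)$-odd involutions: disconnected or uni/bivalent collapsing subgraphs die by translation, rescaling, or reflection through a midpoint, $1$-loop collapsing subgraphs die by the central symmetry through a black vertex, and one then checks by inspection that every $2$-loop subgraph of the graphs in $H$ falls into one of the first cases. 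You correctly flag this as the technical heart, but the resolution is a case analysis of the subgraphs of the specific cocycle $H$, not a refinement of the dimension bound.
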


This theorem is shown by cocycle-cycle pairing. We systematically construct cycles of the space of long embeddings, which we call generalized ribbon cycles, from chord diagrams on directed lines. This diagrammatic construction makes it possible to compute cocycle-cycle pairing through pairing between graphs and diagrams. This diagrammatic computation is analogous to the diagrammatic description of cohomology-homology pairing of configuration spaces that  D.P. Sinha introduced in \cite{Sin}.

As a corollary, we give a non-trivial $S^{2}$-family of trivial long $3$-knots. Budney, Gabai \cite{BG} and Watanabe \cite{Wat 6} already showed that the $(n-1)$-th homotopy group of the path component of the trivial long $n$-knots, $\pi_{n-1}(\text{Emb}(\mathbb{R}^n, \mathbb{R}^{n+2})_{\iota})$\, $(n\geq 2)$, has an infinite-rank subgroup, by different approaches from ours. However, their families of long $n$-knots are constructed through $\text{Diff}_{\partial}(D^{n+1}\times S^1)$ or $\text{Emb}_{\partial}(D^1, D^{n+1}\times S^1)$. It would be interesting to compare our construction of families with theirs. Note that our approach has the possibility to produce an infinite-rank subgroup of $\pi_{n-1}(\text{Emb}(\mathbb{R}^n, \mathbb{R}^{n+2})_{\iota})$ using more general 2-loop graphs. 
 
Moreover, our graph cocycles with more than one loop further support the interesting similarity between the BCR graph complex and the graph complex introduced by G. Arone and V. Turchin \cite{AT1, AT2}. Their complex arises from a homotopy theoretical approach to embedding spaces: embedding calculus developed by Goodwillie, Klein and Weiss \cite{GW, GKW, Wei}. The significant point of their complex is that its graph homology has the full information about the dimension of the rational homology of $\text{Emb}(\mathbb{R}^j, \mathbb{R}^n)$ for $n\geq 2j+2$ \footnote{In \cite{FTW}, Fresse, Turchin and Willwacher generalized the results of \cite{AT2} to $n- j \geq 3$. Note that their approach consists of (i) Goodwillie--Weiss embedding calculus tower (ii) its mapping space model.  (ii) is applicable to any codimension, though (i) works for $n-j \geq 3$.}.  They mentioned in \cite{AT2} that some of their graphs are very similar to BCR graphs with no more than one loop. The point there was that Arone and Turchin's graph complex allows graphs with any number of loops. The $2$-loop graph cocycle we give in this paper has the $2$-loop graph \graphg{} similar to a graph which appears as a non-trivial element in \cite{AT2}. Since BCR graph cohomology can give information for any codimension more than one, we would be able to say that the stability of high codimensions that is described by this graph also survives in the range $n-j =2$.

Although we can apply our framework to the case when $n$ and $j$ are even, we have got no non-trivial cocycles of the space of long embeddings. The main cause is that we have not got a non-trivial graph cocycle for $n,j$ even. But one will be able to guess non-trivial cocycles from the computation of the 2-loop part of Arone and Turchin's graph homology \cite{CCTW}. Note that the graph homology is infinite dimensional.

The paper is organized as follows. First, in Section \ref{Background}, we review the framework of configuration space integrals associated with BCR graphs.  After that, some definitions and notation are given in Section \ref{Definition}. Main Result is stated in Section \ref{Main Result}. Then in Section \ref{Construction of the cocycles}, we construct the well-defined cocycles of $\text{Emb}(\mathbb{R}^n, \mathbb{R}^{n+2})$. After reviewing Sakai and Watanabe's construction of cycles in Section~\ref{Review of Sakai and Watanabe's construction of the wheel-like cycles}, we construct cycles from chord diagrams on directed lines in Section~\ref{Construction of the cycles}. In Section \ref{Some lemmas for computing cocycle-cycle pairing}, we show preliminary lemmas used in the proof of Main Result.  Finally, in Section \ref{Proof of Main Result}, we give the proof of Main Result by pairing the cocycles and the cycles of Section \ref{Construction of the cocycles} and \ref{Construction of the cycles}, described by graph-diagram pairing. 

\section*{Acknowledgement}
 The author is deeply grateful to the author's supervisor, Mikio Furuta, for his continuous support and encouragement for years. The author would like to thank Tadayuki Watanabe for teaching the author a lot about his work.  Discussion with Watanabe highly motivated the author during the preparation of this paper. The author thanks Keiichi Sakai for giving the author a great opportunity to talk and discuss at Shinshu Topology Seminar. Sakai also taught the author a lot about embedding spaces. The author also thanks Masahito Yamazaki and Tatsuro Shimizu for helpful suggestions and stimulating conversations. The author thanks Victor Turchin for informing the author about the papers \cite{FTW, CCTW} and for giving the author a lot of advice for better exposition. 
 
This research was supported by Forefront Physics and Mathematics Program to Drive Transformation (FoPM), a World-leading Innovative Graduate Study (WINGS) Program, the University of Tokyo.

\section{Background}
\label{Background}

\subsection{The BCR graph complex and the de Rham complex of $\text{Emb}(\mathbb{R}^j, \mathbb{R}^n)$}
Sakai in \cite{Sak 2} defined the BCR graph complex $(\mathcal{D}^{k,l}_g, \delta)$ and constructed a linear map from this space to the de Rham complex of $\text{Emb}(\mathbb{R}^j, \mathbb{R}^n)$ through configuration space integrals.
This correspondence gives a higher dimensional analog (in the sense $j \geq 2$) of the correspondence developed by Cattaneo, Cotta-Ramusino and Longoni \cite{CCL}. Refer to subsections \ref{BCR graphs} and \ref{The BCR graph complex} for the definition of the BCR graph complex (and the definitions of $k$, $g$ and $l$).
\begin{theorem}\cite[Theorem 1.2] {Sak 2}
\label{Sakai's result}
Let $k\geq1$, $g\geq0$. Configuration space integrals give a linear map 
\[
I: \mathcal{D}^{k,l}_g \rightarrow \Omega^{k(n-j-2) + (g-1)(j-1) +l}_{dR}(\text{Emb}(\mathbb{R}^j, \mathbb{R}^n)).
\]
Moreover, if one of 
\begin{itemize}
\item $n-j$ is even, $n-j\geq2$, $j\geq 2$ and $g=0$ 
\item $n$ and $j$ are odd, $n>j\geq 3$ and $g=1$
\end{itemize}
is satisfied, the map $I$ gives a cochain map.
\end{theorem}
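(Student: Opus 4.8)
The plan is to realize $I(\Gamma)$ as a fiber integral (pushforward of a differential form along a bundle of compactified configuration spaces) and to extract both the degree formula and the cochain property from that description.

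First, for each BCR graph $\Gamma$ I would construct the bundle over $\text{Emb}(\mathbb{R}^j,\mathbb{R}^n)$ whose fiber over an embedding $\psi$ is the Bott--Taubes (Fulton--MacPherson--Axelrod--Singer) compactification of the configuration space of the vertices of $\Gamma$, with the external vertices constrained to $\psi(\mathbb{R}^j)$ and the internal vertices free in $\mathbb{R}^n$. To each edge I associate a propagator, the pullback of a unit volume form on a sphere under the normalized direction map of its two endpoints, of degree determined by the edge type; their product is a closed form $\omega_\Gamma$ on the total space. Setting $I(\Gamma)=\pi_*\omega_\Gamma$, the fiber integral along the projection $\pi$, the output is a smooth form on $\text{Emb}(\mathbb{R}^j,\mathbb{R}^n)$ once one checks that the direction maps extend smoothly over the blown-up faces and that the fiber is compact. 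The asserted degree is then bookkeeping: it equals the total degree of $\omega_\Gamma$ minus the fiber dimension (internal vertices contributing $n$ each and external vertices $j$ each), and rewriting the resulting count in terms of the graph invariants $k$, $g$, $l$ yields $k(n-j-2)+(g-1)(j-1)+l$. This establishes the linear map for all $k\geq 1$, $g\geq 0$.

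For the cochain property I would apply the generalized Stokes theorem for fiber integration,
\[
d\,\pi_*\omega_\Gamma \;=\; \pi_*(d\omega_\Gamma)\;\pm\;\int_{\partial(\mathrm{fiber})}\omega_\Gamma .
\]
Because each propagator is closed, $d\omega_\Gamma=0$ and only the boundary term survives. The codimension-one boundary of the compactified fiber is stratified by the faces along which a subset $S$ of the vertices collides at a point or escapes to infinity. I would then match the principal faces, where exactly two vertices collide, with the terms of the graph differential: the moves defining $\delta$ (edge contraction and vertex expansion) all arise in this way, giving $\int_{\mathrm{principal}}\omega_\Gamma=\pm I(\delta\Gamma)$.

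The crux is to show that every remaining face contributes zero, so that $dI(\Gamma)=\pm I(\delta\Gamma)$. These are the hidden faces, where three or more vertices collide, together with the faces at infinity and the anomalous faces. I would dispose of them by the standard repertoire: faces at infinity vanish because the relevant direction map degenerates to a constant and the integrand factors through a space of too small dimension; hidden faces collapsing only internal vertices vanish by a Kontsevich-type symmetry argument (the localized integrand is odd under an antipodal involution, or has image of insufficient dimension); and the surviving hidden faces cancel in pairs under orientation-reversing involutions of the graph. This is precisely where the parity hypotheses enter: when $n-j$ is even with $g=0$, or when $n$ and $j$ are both odd with $g=1$, the degrees and orientations of the propagators make these cancellations exact and leave no anomalous (in particular no tadpole) contribution. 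I expect the vanishing of the hidden and anomalous faces to be the main obstacle, and the source of the restriction to these two parity ranges; outside them the hidden-face contributions need not cancel, which is exactly the difficulty that the higher-loop graphs of the present paper are designed to confront.
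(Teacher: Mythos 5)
Your proposal is correct and follows essentially the same route as the paper, which states this result as a citation of [Sak 2] and recapitulates exactly this framework: fiber integration of products of pulled-back sphere volume forms over the compactified configuration-space bundle, Stokes' theorem identifying the principal-face contributions with $I(\delta\Gamma)$, and vanishing of the infinite, hidden, and anomalous faces by dimension counts, rescaling actions, and the Kontsevich-type point-reflection involutions that require the stated parity hypotheses. The only small imprecisions are that the graph differential $\delta$ here is given purely by edge contractions (collisions of two non-adjacent vertices contribute zero rather than a ``vertex expansion'' term), and that the cancelling involutions act on the configuration spaces rather than on the graphs themselves; neither affects the argument.
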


Based on the above theorem, it is expected that we can obtain non-trivial cohomology classes of $\text{Emb}(\mathbb{R}^j, \mathbb{R}^n)$ if the graph cohomology is non-trivial and the map on the cohomology level induced from $I$ satisfies injectivity.

From this perspective, Sakai and Watanabe conducted further research on $g=1$(and $l=0$) in \cite{SW}, where configuration space integral invariants are put together into the universal one. Their work is based on the theory of finite type invariants for ribbon $n$-knots developed by Habiro, Kanenobu and Shima \cite{HS, HKS}.
\begin{theorem}\cite[Theorem 1.1] {SW}
Let $(n, j, k)$ satisfy some conditions. Let $\mathcal{A}^k_1$ be the quotient of the space of $1$-loop graphs by some relations (such as the STU relations of Jacobi diagrams). Then there exists a linear and injective map
\[
\alpha : \mathcal{A}^k_1 \rightarrow H_{(n-j-2)k}(\text{Emb}(\mathbb{R}^j, \mathbb{R}^n),\mathbb{R}). 
\]
In fact, the universal configuration space integral invariant $z_k$ gives a closed $(n-j-2)k$ form, and detects the non-triviality of the above cycles. We review the construction of the cycles in Section \ref{Review of Sakai and Watanabe's construction of the wheel-like cycles}.
\end{theorem}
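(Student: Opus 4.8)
The plan is to define $\alpha$ by realizing each $1$-loop graph as an explicit homology cycle of $\text{Emb}(\mathbb{R}^j,\mathbb{R}^n)$, and then to establish injectivity by pairing these cycles against the cohomology classes produced by the configuration space integral $z_k$, thereby reducing the statement to a nondegeneracy property of a purely combinatorial pairing on graphs. Since the hypotheses on $(n,j,k)$ place us in the regime $g=1$, $l=0$ of the earlier theorem, I may assume throughout that $z_k$ is closed, so that it represents a class in $H^{(n-j-2)k}_{dR}(\text{Emb}(\mathbb{R}^j,\mathbb{R}^n))$ and integration of $z_k$ over a cycle is well defined.

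\textbf{Step 1: construction of $\alpha$ on generators.} For a connected $1$-loop graph $\Gamma$ I would build a smooth family of long embeddings parametrized by a closed oriented manifold $W_\Gamma$ of dimension $(n-j-2)k$, and set $\alpha(\Gamma) := [W_\Gamma]$, which then lands in the correct degree. Following the wheel-like / ribbon construction reviewed in Section~\ref{Review of Sakai and Watanabe's construction of the wheel-like cycles}, one starts from the trivial long embedding and inserts the localized Borromean-type moves prescribed by $\Gamma$, each contributing a sphere factor $S^{n-j-2}$; the $1$-loop (wheel) shape of $\Gamma$ dictates how many such moves occur and how they are chained and glued at the vertices. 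The resulting parameter space is (a quotient of) a product of these spheres, of total dimension $(n-j-2)k$, and I would then verify that $W_\Gamma$ has no boundary, so that it is a genuine cycle.

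\textbf{Step 2: descent to $\mathcal{A}^k_1$.} Next I would verify that $\alpha$ respects the relations cutting $\mathcal{A}^k_1$ out of the free space of $1$-loop graphs, namely the STU relation and the usual antisymmetry/IHX-type relations of Jacobi diagrams. Each such relation should be realized geometrically: a linear combination of graphs that vanishes in $\mathcal{A}^k_1$ corresponds to a collection of families $W_\Gamma$ that cobound, so that $\alpha$ sends it to $0$ in homology. Here I would lean on the finite-type theory of ribbon $n$-knots of Habiro--Kanenobu--Shima, which supplies exactly the combinatorics and the grading by $k$ matching the relations among Jacobi diagrams; the STU relation in particular should arise from a one-parameter family sliding one local move past a trivalent vertex.

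\textbf{Step 3: injectivity via cocycle--cycle pairing, and the main obstacle.} To prove $\alpha$ injective it suffices to exhibit functionals separating the classes $\alpha(\Gamma)$, and these are provided by $z_k$: I would compute $\langle z_k, \alpha(\Gamma)\rangle = \int_{W_\Gamma} z_k$. Because the configuration space integral is local and $W_\Gamma$ has a product structure, the integral localizes onto the boundary strata of the Fulton--MacPherson/Axelrod--Singer compactified configuration space where the graph edges align with the inserted moves; the principal contribution occurs precisely when the integrand graph coincides with $\Gamma$, while mismatched graphs contribute integrals that vanish by degree count or by antisymmetry of the propagators. This yields a pairing matrix between a basis of $\mathcal{A}^k_1$ and the classes $[z_k]$ that is, after suitable ordering, triangular with nonzero diagonal, hence nondegenerate, so $\alpha$ is injective. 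The hard part will be exactly this vanishing analysis: one must control every boundary face of the compactified configuration space bundle over $W_\Gamma$ --- principal, hidden, and anomalous faces --- and show that all off-diagonal and spurious contributions cancel or vanish for dimensional or symmetry reasons, leaving only the diagonal term. A secondary difficulty is completeness in Step~2, namely checking that the geometric cobordisms account for all the defining relations of $\mathcal{A}^k_1$ and for nothing more, so that $\alpha$ is genuinely defined on the quotient and the pairing of Step~3 is taken against the correct target.
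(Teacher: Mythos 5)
Your proposal follows essentially the same route as the source: the paper itself states this result without proof (it is quoted from \cite{SW}), and the strategy you describe --- wheel-like ribbon cycles parametrized by products of $S^{n-j-2}$, well-definedness on $\mathcal{A}^k_1$ via the Habiro--Kanenobu--Shima moves on ribbon presentations, and injectivity by evaluating the universal integral $z_k$ on these cycles and reducing to a nondegenerate combinatorial graph--diagram pairing with all spurious boundary-face contributions vanishing --- is precisely the one the paper reviews in Section \ref{Review of Sakai and Watanabe's construction of the wheel-like cycles} and reprises for $2$-loop graphs via the localizing, pairing, and counting lemmas of Section \ref{Some lemmas for computing cocycle-cycle pairing}.
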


\begin{rem}
In Theorem \ref{Sakai's result}, we can observe that when $n-j=2$ we have to use graphs with $g\geq2$ or $l\geq1$ to obtain cocycles of higher degree. 
\end{rem}

\begin{rem}
The case $l\geq1$ is out of the scope of this paper. For the case $j=1$, some progresses are made by Longoni \cite{Lon}, Sakai \cite{Sak 1, Sak 3}, Pelatt and Sinha \cite{PS}. In these works, new types of cycles are constructed generalizing the case $l=0$. 
\end{rem}

\section{Definition} 
\label{Definition}
\subsection{The space of long embeddings}
\label{The space of long embeddings}

First, We define the space of long embeddings and some related spaces.

\begin{definition}
A long embedding is an embedding $\mathbb{ R}^j \rightarrow \mathbb{R}^n$ which coincides with the standard linear embedding $\iota: \mathbb{R}^j \subset \mathbb{R}^n$ outside a disk in $\mathbb{R}^j$. A long embedding $\mathbb{R}^n \rightarrow \mathbb{ R}^{n+2}$ is called a long $n$-knot. We equip the space $\text{Emb}(\mathbb{R}^j, \mathbb{R}^n)$ of long embeddings with the induced topology from the weak $C^{\infty}$ topology. We define the space of long immersions $\text{Imm}(\mathbb{R}^j, \mathbb{R}^n)$ similarly. 
\end{definition}

\begin{rem}
Budney showed in \cite{Bud} that $\text{Emb}(S^j, S^n) \simeq SO_{n+1} \times_{SO_{n-j}} \text{Emb}(\mathbb{R}^j, \mathbb{R}^n)$. That is, the difference between the two spaces  $\text{Emb}(S^j, S^n)$ and $\text{Emb}(\mathbb{R}^j, \mathbb{R}^n)$ is given by the Stiefel manifold $SO_{n+1}/SO_{n-j}$, whose homotopy groups are well-studied.
\end{rem}

\begin{rem}
We often consider $\text{Emb}_{\partial}(D^j, D^n)$, the space of embeddings $D^j\rightarrow D^n$ which are standard near the boundary, instead of $\text{Emb}(\mathbb{R}^j, \mathbb{R}^n)$. The homotopy types of these two spaces are equivalent.
\end{rem}

For technical reasons, sometimes we think of the space of embeddings modulo immersions.
\begin{definition}
\label{embeddings modulo immersions}
We write $\overline{\text{Emb}}(\mathbb{R}^j, \mathbb{R}^n)$ for the homotopy fiber of the inclusion
\[
\text{Emb}(\mathbb{R}^j, \mathbb{R}^n) \hookrightarrow \text{Imm}(\mathbb{R}^j, \mathbb{R}^n)
\]
at the standard linear embedding $\iota: \mathbb{R}^j \subset \mathbb{R}^n$. That is, an element of $\overline{\text{Emb}}(\mathbb{R}^j, \mathbb{R}^n)$  is a one-parameter family $\{\overline{K}_t\}_{t \in [0,1]}$ of long immersions which satisfies $\overline{K}_0 =~\iota$, $\overline{K}_1\in \text{Emb}(\mathbb{R}^j, \mathbb{R}^n$). Let 
\[
r:  \overline{\text{Emb}}(\mathbb{R}^j, \mathbb{R}^n)  \rightarrow  \text{Emb}(\mathbb{R}^j, \mathbb{R}^n)
\]
be the natural projection.

\begin{notation}
Write $\iota: \mathbb{R}^j \rightarrow \mathbb{R}^n$ for the standard linear embedding. The path component of $\text{Emb}(\mathbb{R}^j, \mathbb{R}^n)$ which $\iota$ belongs to is denoted by $\text{Emb}(\mathbb{R}^j, \mathbb{R}^n)_{\iota}$. We often call this component the unknot component. Write $\overline{\text{Emb}}(\mathbb{R}^j, \mathbb{R}^n)_{\iota}$ for the component of the trivial family of the standard linear immersion. 
\end{notation}

\end{definition}

\subsection{BCR graphs}
\label{BCR graphs}
We move on to the definition of BCR graphs and their complex. See Section~\ref{Main Result} for examples of BCR graphs. Although we basically follow the definitions of \cite{Bot, CR, Sak 2, SW, Wat 1}, we restate them since there are some differences in conventions among these papers. 

\begin{definition}
A BCR graph is a graph which satisfies the following conditions.
\begin{itemize}
\item [(1)] There are two types of vertices, white and black. 
\item [(2)] There are two types of edges, dashed and solid. 
\item [(3)] Each white vertex has only dashed edges, and its valency is at least three. Each black vertex has at least one dashed edge and has an arbitrary number of solid edges.
\item [(4)] White vertices have no small loop
\begin{tikzpicture}[x=1pt,y=1pt,yscale=1,xscale=1, baseline=1pt]
\draw [dash pattern = on 3pt off 2pt] (0,0) .. controls (-20,20) and (20,20)  .. (0,0);
\draw (0,-1) circle (1.5);
\end{tikzpicture}. 
Black vertices have no small loop consisting of a solid edge only
\begin{tikzpicture}[x=1pt,y=1pt,yscale=1,xscale=1, baseline=1pt]
\draw  (0,0) .. controls (-20,20) and (20,20)  .. (0,0);
\draw  [fill={rgb, 255:red, 0; green, 0; blue, 0 }  ,fill opacity=1 ] (0,0) circle (1);
\end{tikzpicture}.
(See Remark \ref{rmk about loops} below.)
\end{itemize} 
\end{definition}

\begin{example}
Here are typical examples of vertices of BCR graphs.

\begin{tikzpicture}[x=0.75pt,y=0.75pt,yscale=0.4,xscale=0.4, baseline=20pt, line width = 1pt] 
\draw [dash pattern={on 5pt off 4pt}]  (0,0)--(95,95);
\draw [dash pattern={on 5pt off 4pt}]  (200,0)--(105,95);
\draw [dash pattern={on 5pt off 4pt}]  (100,200)--(100,105);
\draw [color={rgb, 255:red, 0; green, 0; blue, 0 }  ,draw opacity=1 ] (100, 100) circle (7);

\begin{scope}[xshift=300]
\draw [dash pattern={on 3pt off 2pt}]  (0,100)--(200,100);
\draw [fill={rgb, 255:red, 0; green, 0; blue, 0 }  ,fill opacity=1 ] (0,100) circle(7); 
\end{scope}

\begin{scope}[xshift=500]
\draw [dash pattern={on 3pt off 2pt}]  (0,0)--(95,95);
\draw [-] (105,95)--(200,0);
\draw [fill={rgb, 255:red, 0; green, 0; blue, 0 }  ,fill opacity=1 ] (100,100) circle(7); 
\end{scope}

\begin{scope}[xshift=700]
\draw [-]  (0,0)--(95,95);
\draw [dash pattern={on 3pt off 2pt}] (100,200)--(100,100);
\draw []  (105,95)--(200,0);
\draw [fill={rgb, 255:red, 0; green, 0; blue, 0 }  ,fill opacity=1 ] (100,100) circle(7); 
\end{scope}
\end{tikzpicture}
\end{example}

\begin{rem}
\label{rmk about loops}
Black vertices are allowed to have small loops
\begin{tikzpicture}[x=1pt,y=1pt,yscale=1,xscale=1, baseline=1pt]
\draw  [dash pattern = on 3pt off 2pt] (0,0) .. controls (-20,20) and (20,20)  .. (0,0);
\draw  [fill={rgb, 255:red, 0; green, 0; blue, 0 }  ,fill opacity=1 ] (0,0) circle (1);
\end{tikzpicture}
and double loops
\begin{tikzpicture}[x=1pt,y=1pt,yscale=1,xscale=1, baseline=1pt]
\draw [dash pattern = on 3pt off 2pt] (0,0) .. controls (-20,20) and (20,20)  .. (0,0);
\draw  (0,0) .. controls (-10,10) and (10,10)  .. (0,0);
\draw  [fill={rgb, 255:red, 0; green, 0; blue, 0 }  ,fill opacity=1 ] (0,0) circle (1);
\end{tikzpicture}
. We write the set of small loops of $\Gamma$ as $L_s(\Gamma)$ and the set of double loops as $L_d(\Gamma)$. Set $L(\Gamma) = L_s(\Gamma) \sqcup L_d(\Gamma)$. In our case ($n,j$ are odd), double loops vanish by the third relation in Definition \ref{admissible graphs}.
\end{rem}

\begin{notation}
Let $\Gamma$ be a BCR graph. We denote the set of solid edges of $\Gamma$ by $E_\eta(\Gamma)$, and the set of dashed edges by $E_\theta(\Gamma)$. Edges of small loops and double loops are included in these sets. The set of edges $E(\Gamma)$ is decomposed as 
\[
E(\Gamma) = E_{\eta}(\Gamma) \sqcup E_{\theta}(\Gamma).
\]
We denote the set of black vertices of $\Gamma$ by $B(\Gamma)$, and the set of white vertices by $W(\Gamma)$. The set of vertices $V(\Gamma)$ is decomposed as 
\[
V(\Gamma) = B(\Gamma) \sqcup W(\Gamma).
\]
\end{notation}

\begin{definition}[Defect of a graph]
Let $v$ be a vertex of a BCR graph $\Gamma$. We define the defect $l(v)$ of $v$ as
\begin{equation*}
l(v) = \begin{cases}
\# E_{\theta}(v) -3 & (v\in W(\Gamma)) \\
\# E_{\theta}(v)-1 & (v\in B(\Gamma)).\\
\end{cases}
\end{equation*}
Here $E_{\theta}(v)$ is the number of dashed edges $v$ has.
The defect $l(\Gamma)$ of $\Gamma$ is defined as
\[
l(\Gamma) = \sum_{v\in V(\Gamma)}l(v),
\]
and is equal to
\[
2 \#E_{\theta}(\Gamma) - 3\#W(\Gamma) - \#B(\Gamma).
\]
 \end{definition}
 
\begin{rem}
The defect of a graph measures how the graph is different, in terms of edge contractions, from graphs in which each black vertex has exactly one dashed edge and each white vertex has exactly three dashed edges.
\end{rem}

\begin{definition} [Order of a graph]
\label{Order of a graph}
The order $k(\Gamma)$ of a BCR graph $\Gamma$ is given by
\[
k(\Gamma) =\#E_{\theta}(\Gamma) - \#W(\Gamma).
\]
\end{definition}

\begin{rem}
When the defect is $0$, we have $k = \frac{\#V(\Gamma)}{2}$ . 
\end{rem}

\begin{definition} [$g$-loop graphs]
\label{g-loop graphs}
A BCR graph is a $g$-loop graph if the first Betti number of any component is $g$. Here, we consider small and double loops as points, and we do not count them as loops.
\end{definition}

\begin{definition} [Labels of a graph]
We give a label of $\Gamma$ by an ordering of $V(\Gamma)$, an ordering of $E(\Gamma)$, an orientation of each edge $e\in E(\Gamma)$ and an ordering of $L_s(\Gamma)$.
Here, an ordering of a set $X$ with $m$ elements is a bijective map from $\{1,2,\dots, m\}$ to $X$. We choose an ordering so that black vertices and solid edges come first.
\end{definition}

\begin{definition} [Orientations of a graph]
An orientation $s$ of a graph $\Gamma$ is a choice
\begin{equation*}
s \in \text{det} \left(\mathbb{R}V(\Gamma)\bigoplus  \oplus_{e\in E(\Gamma)} \mathbb{R}H(e) \right).
\end{equation*}
Here $H(e)$ is the set of two half-edges of $e$. A labeled graph determines an orientation of the underlying graph. 
\end{definition}

\begin{rem}
In this paper, we only handle the case $n,j$ are odd. The above definitions of labels and orientations are only for this case. We can define orientation of graphs so that the definition depends only on the parities of $n,j$.
\end{rem}

\begin{rem}
An ordering of $L_s(\Gamma)$ is used to define the configuration space integral associated with $\Gamma$. However, since we define configuration space integrals for graphs of defect $0$, we do not use this data in the rest of this paper.
\end{rem}

\subsection{The BCR graph complex}
\label{The BCR graph complex}

\begin{definition}
\label{admissible graphs}
We define relations of the vector space spanned by labeled BCR graphs of order $k$ and defect $l$. 
We denote the quotient space by $\mathcal{D}^{k,l}$.
\begin{itemize}
\item [(1)] Two labels with the same orientation are identified, and ones with opposite orientations are given opposite signs. 
\item [(2)] $\Gamma = 0$ for any graph $\Gamma$ with a double edge.

\begin{center}
\begin{tikzpicture}[x=1pt,y=1pt,yscale=1.5,xscale=1.5]
\draw   (-1,0) circle (1) ;
\draw   (50,0) circle (1) ;
\draw  [dash pattern = on 3pt off 2pt] (0,0) .. controls (20,5) and (30,5)  .. (50,0);
\draw  [dash pattern = on 3pt off 2pt](0,0) .. controls (20,-5) and (30,-5)  .. (50,0);
\begin{scope}[xshift= 100]
\draw  [fill={rgb, 255:red, 0; green, 0; blue, 0 }  ,fill opacity=1 ] (-1,0) circle (1) ;
\draw  [fill={rgb, 255:red, 0; green, 0; blue, 0 }  ,fill opacity=1 ] (50,0) circle (1) ;
\draw  (0,0) .. controls (20,5) and (30,5)  .. (50,0);
\draw  (0,0) .. controls (20,-5) and (30,-5)  .. (50,0);
\end{scope}
\end{tikzpicture}
\end{center}

\item [(3)] $\Gamma = 0$ for any graph $\Gamma$ with a double loop. (On the other hand,  a graph with small loops may survive.)

\item [(4)] When $l=0$, we set $\Gamma = 0$ for any ``non-admissible'' graph $\Gamma$. A graph $\Gamma$ is non-admissible (i) if it has a black vertex whose eta-valency is at least three such as 
\begin{tikzpicture}[x=0.75pt,y=0.75pt,yscale=0.4,xscale=0.4, baseline=20pt, line width = 1pt] 
\draw [dash pattern={on 3pt off 2pt}] (100,150)--(100,100);
\draw []  (0,0)--(95,95);
\draw [-]  (100,100)--(100,0);
\draw []  (105,95)--(200,0);
\draw [fill={rgb, 255:red, 0; green, 0; blue, 0 }  ,fill opacity=1 ] (100,100) circle(7); 
\end{tikzpicture},
 or (ii) if it has a multiple edge
 \begin{tikzpicture}[x=1pt,y=1pt,yscale=1.5,xscale=1.5, baseline=-1pt]
\draw  [fill={rgb, 255:red, 0; green, 0; blue, 0 }  ,fill opacity=1 ] (-1,0) circle (1) ;
\draw  [fill={rgb, 255:red, 0; green, 0; blue, 0 }  ,fill opacity=1 ] (50,0) circle (1) ;
\draw  [dash pattern = on 3pt off 2pt] (0,0) .. controls (20,5) and (30,5)  .. (50,0);
\draw (0,0) .. controls (20,-5) and (30,-5)  .. (50,0);
\end{tikzpicture}.

\end{itemize}

\end{definition}

\begin{notation}
We write the subspace of $\mathcal{D}^{k,l} $ generated by graphs with $g$ loops as $\mathcal{D}^{k,l}_g$. (Here, we count neither small nor double loops as loops.)
\end{notation}

\begin{rem}
The fourth condition is just to make the computation of the top cohomology easier, and it follows the conventions of \cite{SW}. So far, the author has yet to find precise reasons to exclude the possibility that we can construct a graph cocycle (of defect $0$) essentially using these non-admissible graphs. But one possible reason is that we can change non-admissible graphs to admissible graphs by Arnol'd relation (see subsection \ref{Comparing with the Arone-Turchin graph complex}) with respect to solid edges. 
\end{rem}

\begin{rem}
One possible variant of $\mathcal{D}$ is the graph complex $\mathcal{\overline{D}}$ that is the same as $\mathcal{D}$ for $l\geq0$ but has defect $(-1)$ part $\overline{\mathcal{D}}^{-1}$. Here a graph with defect $(-1)$ is a graph that has vertices of defect $0$ and exactly one black vertex with three solid edges and no dashed edge. Contractions of the three solid edges adjacent to the black vertex yield Arnol'd relation. 
\end{rem}

Next, we define the coboundary map  $\delta: \mathcal{D}^{k,l}_g\rightarrow \mathcal{D}^{k,l+1}_g$ of the graph complex.
\begin{definition}[Contraction of a BCR graph  \cite{Sak 2}]
\label{Contraction of a BCR graph}
We define the contraction $\Gamma/e$ of a BCR graph $\Gamma$ at an edge $e$ as follows.
\begin{itemize}
\item [(a)] When $e = (p,q)\in E_\eta(\Gamma)$ (except multiple edges).

Collapse $e$ to the black vertex at the ends of $e$ with a smaller label, and then reassign labels to vertices of the collapsed graph in a natural way. (That is, assign $\text{min}\{p, q\}$ to this collapsed vertex, and subtract $1$ from all vertex-labels greater than $\text{max}\{p,q\}$.

\begin{center}
\begin{tikzpicture}[x=1pt,y=1pt,yscale=1.5,xscale=1.5]
\draw  [fill={rgb, 255:red, 0; green, 0; blue, 0 }  ,fill opacity=1 ] (0,0) circle (1) node [anchor = north] {$p$};
\draw  [fill={rgb, 255:red, 0; green, 0; blue, 0 }  ,fill opacity=1 ] (50,0) circle (1) node [anchor = north] {$q$};
\draw  (0,0)--(50,0);
\draw (25, -10) node {$\Gamma$};

\begin{scope}[xshift=100]
\draw  [fill={rgb, 255:red, 0; green, 0; blue, 0 }  ,fill opacity=1 ] (0,0) circle (1) node [anchor = west] {$\text{min}\{p, q\}$};
\draw (0, -10) node {$\Gamma/e$};
\end{scope}
\end{tikzpicture}
\end{center}

\item [(b)] When  $e = (p,q)\in E_{\theta}(\Gamma)$ and at least one of $p$ and $q$ is white.

If exactly one is white, collapse $e$ to the black vertex. If both are white, collapse $e$ to the white vertex with a smaller label. The way to reassign labels is similar to (a).

\begin{center}
\begin{tikzpicture}[x=1pt,y=1pt,yscale=1.5,xscale=1.5]
\draw  (-1,0) circle (1) node [anchor = north] {$p$};
\draw  [fill={rgb, 255:red, 0; green, 0; blue, 0 }  ,fill opacity=1 ] (50,0) circle (1) node [anchor = north] {$q$};;
\draw  [dash pattern = on 3pt off 2pt] (0,0)--(50,0);
\draw (25, -10) node {$\Gamma$};

\begin{scope}[xshift=90]
\draw  [fill={rgb, 255:red, 0; green, 0; blue, 0 }  ,fill opacity=1 ] (0,0) circle (1) node [anchor = west] {$\text{min}\{p, q\}=q$};
\draw (0, -10) node {$\Gamma/e$};
\end{scope}

\end{tikzpicture}
\end{center}

\begin{center}
\begin{tikzpicture}[x=1pt,y=1pt,yscale=1.5,xscale=1.5]
\draw  (-1,0) circle (1) node [anchor = north] {$p$};
\draw  (50,0) circle (1) node [anchor = north] {$q$};;
\draw  [dash pattern = on 3pt off 2pt] (0,0)--(50,0);
\draw (25, -10) node {$\Gamma$};

\begin{scope}[xshift=100]
\draw (0,0) circle (1) node [anchor = west] {$\text{min}\{p, q\}$};
\draw (0, -10) node {$\Gamma/e$};
\end{scope}

\end{tikzpicture}
\end{center}

\item[(c)] When $e = (p,q)\in E_\theta(\Gamma)$ and both $p$ and $q$ are black vertices.

We construct a small loop \footnote{Though \cite{Sak 2} introduces sign of small loops, it seems unnecessary.}. If the number of small loops of $\Gamma$ is $a$, the label of this loop is $(a+1)$.

\begin{center}
\begin{tikzpicture}[x=1pt,y=1pt,yscale=1.5,xscale=1.5]
\draw  [fill={rgb, 255:red, 0; green, 0; blue, 0 }  ,fill opacity=1 ] (-1,0) circle (1) node [anchor = north] {$p$};
\draw  [fill={rgb, 255:red, 0; green, 0; blue, 0 }  ,fill opacity=1 ] (50,0) circle (1) node [anchor = north] {$q$};;
\draw  [dash pattern = on 3pt off 2pt] (0,0)--(50,0);
\draw (25, -10) node {$\Gamma$};

\begin{scope}[xshift=100]
\draw [dash pattern = on 3pt off 2pt] (0,0) .. controls (-20,20) and (20,20)  .. (0,0);
\draw (0, 20) node {$(a+1)$};
\draw  [fill={rgb, 255:red, 0; green, 0; blue, 0 }  ,fill opacity=1 ] (0,0) circle (1) node [anchor = west] {$\text{min}\{p, q\}$};
\draw (0, -10) node {$\Gamma/e$};
\end{scope}

\end{tikzpicture}
\end{center}

\item[(d)] When $e = (p,q)$ is the solid edge of a multiple edge.

We construct a double loop, which vanishes by the third relation in Definition \ref{admissible graphs}. We do not perform contractions of dashed edges of multiple edges. 

\end{itemize}

\end{definition}

\begin{definition}  [Coboundary map \cite{Sak 2}] 
\label{Coboundary operator}
We define the coboundary map $\delta : \mathcal{D}^{k, l}_g \rightarrow \mathcal{D}^{k, l+1}_g$ of $\mathcal{D}^{k, l}_g$ as follows:
\[
\delta(\Gamma) = \sum_{e\in E(\Gamma)\setminus\{\text{loops}\}} \sigma(e)\,\Gamma/e.
\]

Here, the sign $\sigma(e)$ of an edge $e$ is defined as follows. 

\begin{equation*}
\sigma(e=(p, q))= 
\begin{cases} 
(-1)^q  &  \text{if $p<q$ }\\
(-1)^{p+1} & \text{if $p>q$}.\\
\end{cases}
\end{equation*}
\end{definition}

\begin{prop}
$(\mathcal{D}^{k, \ast}_g , \delta)$ is a cochain complex.
\end{prop}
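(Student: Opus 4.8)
The plan is to verify that $\delta^2 = 0$ by showing that the terms arising from iterated contractions cancel in pairs. Since $\delta(\Gamma) = \sum_{e} \sigma(e)\,\Gamma/e$, expanding $\delta^2(\Gamma)$ gives a double sum
\[
\delta^2(\Gamma) = \sum_{e}\sum_{e'} \sigma(e)\,\sigma(e')\,(\Gamma/e)/e',
\]
where $e$ ranges over non-loop edges of $\Gamma$ and, for each $e$, the edge $e'$ ranges over non-loop edges of $\Gamma/e$. The essential observation is that contracting two distinct edges $e, f$ of $\Gamma$ in either order yields the same underlying contracted graph, so the terms organize into unordered pairs $\{e,f\}$, each contributing $(\Gamma/e)/f$ and $(\Gamma/f)/e$. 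The goal is then to check that for each such pair the two signs are opposite, so the contributions cancel.

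First I would make precise the claim that distinct contractions commute as unlabeled oriented graphs: I would check case-by-case, using Definition \ref{Contraction of a BCR graph}, that collapsing $e$ then $f$ produces the same graph (vertices identified, loops created, labels reassigned) as collapsing $f$ then $e$, including the bookkeeping for which vertex-labels get decremented and how new small-loop labels are assigned. Here I must be careful about degenerate interactions: when $e$ and $f$ share a vertex, or when contracting one of them turns the other into a loop (case (c)) or into an edge of a multiple edge (case (d)), or when one contraction creates a double edge (killed by relation (2)) or double loop (killed by relation (3)). Terms that die by these relations must die symmetrically in both orders so as not to spoil the pairing; I would confirm that a pair $\{e,f\}$ either contributes a genuine cancelling pair or else both of its terms vanish.

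The sign analysis is the computational heart. The sign $\sigma(e)$ in Definition \ref{Coboundary operator} depends on the labels $p,q$ of the endpoints and their ordering, and crucially the labels change under the first contraction, so $\sigma(f)$ computed in $\Gamma$ differs from $\sigma(f)$ computed in $\Gamma/e$. I would track exactly how the relabeling (assigning $\min\{p,q\}$ to the collapsed vertex and decrementing all labels above $\max\{p,q\}$) transforms the label of $f$'s endpoints, and then compare the composite sign $\sigma_\Gamma(e)\,\sigma_{\Gamma/e}(f)$ with $\sigma_\Gamma(f)\,\sigma_{\Gamma/f}(e)$. One should also account for the induced orientation sign on the underlying oriented graph, since the two contraction orders may assign the contracted graph orientations differing by a sign via the relation in Definition \ref{admissible graphs}(1). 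The expected outcome is that the label-shift contribution together with the orientation contribution conspire to make these two composite signs negatives of each other; this is the standard mechanism (as in the Koszul-type sign for simplicial or graph-complex differentials) by which the antisymmetry of the orientation cancels the seemingly order-dependent sign convention.

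The main obstacle I anticipate is precisely the sign verification in the presence of label reassignment, together with handling the special cases uniformly. The sign $\sigma(e)$ is defined asymmetrically in $p<q$ versus $p>q$, so I would first confirm that $\sigma(e)$ is well-defined independent of the chosen orientation of $e$ once combined with the graph orientation, and then reduce the general cancellation to a clean combinatorial identity in the endpoint labels. The cases (c) and (d) require extra attention: in (c) a dashed edge between two black vertices becomes a small loop (which is then excluded from the next $\delta$-sum), and in (d) a double loop appears and is set to zero, so I would check that whenever such a configuration arises in one contraction order it arises compatibly in the other, ensuring no orphaned term survives. Once the commuting-contractions identity and the opposite-sign identity are both established across all cases, $\delta^2(\Gamma)=0$ follows term by term, and since $\delta$ visibly preserves both the loop number $g$ (contraction does not change the first Betti number away from the loops it is designed to track) and raises defect by one, the proposition is complete.
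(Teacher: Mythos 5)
Your proposal is correct and is essentially a detailed expansion of the paper's own argument, which simply states that reversing the order of two contractions changes the sign. The pairing of terms by unordered pairs of edges, the commutation of contractions, and the opposite-sign verification (including the degenerate cases) are exactly the content behind that one-line proof.
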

\begin{proof}
This follows from the fact that reversing the order of two contractions changes the sign. 
\end{proof}

\subsection{Comparing with the Arone-Turchin graph complex}
\label{Comparing with the Arone-Turchin graph complex}
Arone and Turchin  \cite{AT1, AT2} defined two graph complexes $\mathcal{E}^{j,n}$ and $HH^{j,n}$ ($j$ and $n$ come from $\text{Emb}(\mathbb{R}^j, \mathbb{R}^n)$). These complexes both arise from a homotopy theoretical approach: the description of the space of long embeddings as a space of derived maps between modules over an operad.

The complex $\mathcal{E}^{j,n}$ consists of so-called hairy graphs with two types of vertices and with one type of edges. Graphs of $HH^{j,n}$ have two types of edges as graphs of $\mathcal{D}$.
The differences between $H H^{j,n}$ and our complex $\mathcal{D} = \mathcal{D}^{j,n}$ are
\begin{itemize}
\item [1.] Their graphs have only black vertices.
\item [2.] Their graphs do not have cycles that consist of one type of edges. In particular, there are no small loops, double loops or double edges.
\item [3.] Their coboundary map is the sum of contractions of only solid edges.
\item [4.] Their complex has another relation called Arnol'd relation \cite{Arn} as follows. (The edges in the figure must be the same type, either dashed or solid.)

\begin{tikzpicture}[x=0.75pt,y=0.75pt,yscale=1,xscale=1, baseline=20pt, line width = 1pt] 
\draw [-Stealth]  (0,0)--(95,0) ;
\draw(50,0) node [anchor = north] {$(1)$} ;
\draw [-Stealth]  (100,0)--(50,48) ;
\draw (75,30) node [anchor = west]  {$(2)$} ;

 ;
\draw [fill={rgb, 255:red, 0; green, 0; blue, 0 }  ,fill opacity=1 ] (0,0) circle(4) node [anchor = south] {$i$}; 
\draw [fill={rgb, 255:red, 0; green, 0; blue, 0 }  ,fill opacity=1 ] (100,0) circle(4) node [anchor = south] {$j$}; ; 
\draw [fill={rgb, 255:red, 0; green, 0; blue, 0 }  ,fill opacity=1 ] (50,50) circle(4) node [anchor = south] {$k$} ; 
\end{tikzpicture}
+
\begin{tikzpicture}[x=0.75pt,y=0.75pt,yscale=1,xscale=1, baseline=20pt, line width = 1pt] 
\begin{scope}[xshift=100]
\draw [-Stealth]  (95,0)--(50,50);
\draw (75,30) node [anchor = west]  {$(1)$} ;
\draw [-Stealth]  (50,50)--(5,0);
\draw (5,30) node [anchor = west]  {$(2)$} ;

\draw [fill={rgb, 255:red, 0; green, 0; blue, 0 }  ,fill opacity=1 ] (0,0) circle(4) (0,0) circle(5) node [anchor = south] {$i$}; ; 
\draw [fill={rgb, 255:red, 0; green, 0; blue, 0 }  ,fill opacity=1 ] (100,0) circle(4) node [anchor = south] {$j$} ; 
\draw [fill={rgb, 255:red, 0; green, 0; blue, 0 }  ,fill opacity=1 ] (50,50) circle(4) node [anchor = south] {$k$} ; 
\end{scope}
\end{tikzpicture}
+
\begin{tikzpicture}[x=0.75pt,y=0.75pt,yscale=1,xscale=1, baseline=20pt, line width = 1pt] 
\begin{scope}[xshift=200]
\draw [-Stealth]  (0,0)--(95,0);
\draw(50,0) node [anchor = north] {$(2)$} ;
\draw [-Stealth]  (50,50)--(5,0);
\draw (5,30) node [anchor = west]  {$(1)$} ;
\draw [fill={rgb, 255:red, 0; green, 0; blue, 0 }  ,fill opacity=1 ] (0,0) circle(4) node [anchor = south] {$i$} ; 
\draw [fill={rgb, 255:red, 0; green, 0; blue, 0 }  ,fill opacity=1 ] (100,0) circle(4) node [anchor = south] {$j$} ; 
\draw [fill={rgb, 255:red, 0; green, 0; blue, 0 }  ,fill opacity=1 ] (50,50) circle(4) node [anchor = south] {$k$} ; 
\end{scope}
\end{tikzpicture}
= 0.
 
\end{itemize}
Note that our convention for solid and dashed edges are opposite from \cite{AT1} and \cite {AT2}: we follow the convention that solid edges correspond to $\mathbb{R}^j$ of the domain.

\subsection{Configuration space integrals}
\label{Configuration space integral}
From now on, for simplicity, we assume that graphs used to define configuration space integrals are connected and satisfy $l=0$.

\begin{notation}
We write $C_s(\mathbb{R}^n)$ for the configuration space of $s$ points in $\mathbb{R}^n$:
\[
C_s(\mathbb{R}^n) = (\mathbb{R}^n)^{s}\setminus \text{all diagonals}.
\]
\end{notation}

\begin{definition}[Configuration space]
Let $K:\mathbb{R}^j \rightarrow\mathbb{R}^{n}$ be a long embedding. Then we define the configuration space $C_{s,t}(K, \mathbb{R}^n)$ as follows.
\[
 C_{s,t}(K, \mathbb{R}^n)= \left\{(x_1, x_2, \dots, x_s, x_{s+1}, \dots, x_{s+t}) \in (\mathbb{R}^n)^{s+t}\setminus \text{all diagonals}\, \middle| 
\begin{array}{l}\forall i \in \{1, \dots, s\}, x_i \in K\end{array}\right\}. 
\]
Hence $ C_{s,t}(K, \mathbb{R}^n)$ is the pullback of the following diagram:
\begin{center}
\begin{tikzpicture}[auto]
\node (c) at (0, 2) {$C_{s,t}(K, \mathbb{R}^n)$}; \node (d) at (3, 2) {$C_{s+t}(\mathbb{R}^n)$}; 
\node (a) at (0, 0) {$C_{s}(\mathbb{R}^j)$}; \node (b) at (3, 0) {$C_{s}(\mathbb{R}^n)$};

\draw[->] (c)  to (d);
\draw[->] (a) to node {$K^{\times s}$} (b);
\draw [->] (c) to  (a);
\draw [->] (d) to  node {restriction} (b);
\end{tikzpicture}
\end{center}
\end{definition}

\begin{definition} [Direction map]
\label{Direction map}
Let $\Gamma$ be a labeled BCR graph and $s=\#B(\Gamma), t=\#W(\Gamma)$.
Let $e = (i, j)$ be an edge of $\Gamma$. Depending on $e\in E_{\theta}(\Gamma)$ or $e \in E_{\eta}(\Gamma)$, the direction map $\phi_e = \phi_{i j}$ from $C_{s,t}(K, \mathbb{R}^n)$ to a sphere is defined as follows:
\begin{itemize}
\item If $e = (i, j)\in E_{\theta}(\Gamma)$,
\[
\phi_{ij} : C_{s,t}(K, \mathbb{R}^n) \longrightarrow C_2(\mathbb{R}^n)=C_{0,2}(\emptyset, \mathbb{R}^n) \longrightarrow S^{n-1}.
\]
\item If $e =(i, j)\in E_{\eta}(\Gamma)$,

\[
\phi_{ij} : C_{s,t}(K, \mathbb{R}^n) \longrightarrow C_2(\mathbb{R}^j) =C_{2,0}(K, \mathbb{R}^n) \longrightarrow S^{j-1}.
\]
\end{itemize}
In both cases, the first map is the restriction
\[
(x_1, x_2, \dots, x_s, x_{s+1}, \dots, x_{s+t}) \mapsto (x_i, x_j),
\]
and the second is defined as 
\[
(x,y) \mapsto \frac{y-x}{||y-x||}.
\]

Intuitively, dashed edges measure the direction between two points in $\mathbb{R}^n$, while solid edges measure the direction in $\mathbb{R}^j$.


\begin{figure}[htpb]
\labellist
\small \hair 10pt
\pinlabel $(1)$ at 200 230
\pinlabel $(2)$ at 300 230
\pinlabel $(3)$ at 700 300
\pinlabel $(4)$ at 850 320
\pinlabel $(5)$ at 920 350
\pinlabel $K$ at 500 360
\endlabellist
\centering
\includegraphics[width = 10cm]{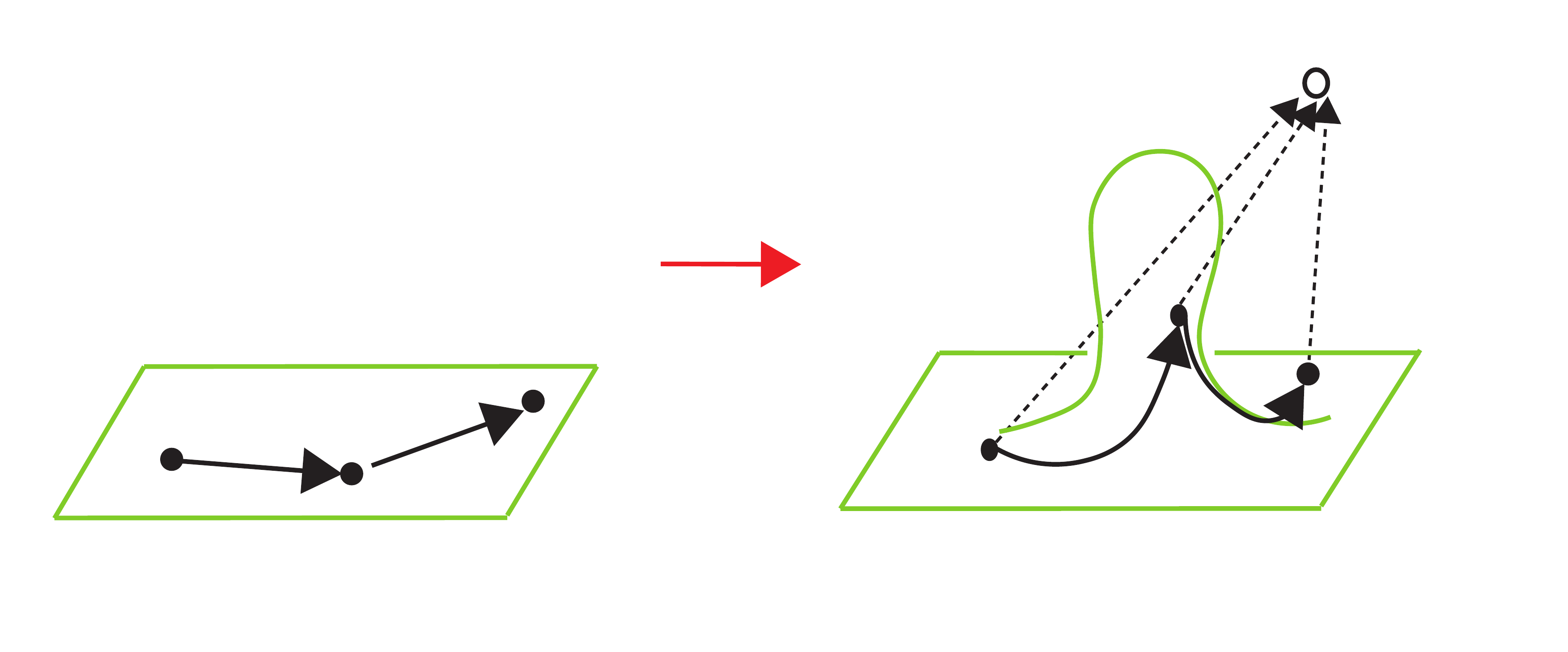}
\caption{Graph and its direction maps}
\end{figure}

\end{definition}

\begin{definition} [Propagator]
\label{Propagator}
We define the differential form $\omega (K, \Gamma)$ on $C_{s,t}(K, \mathbb{R}^n)$ of degree $(n-1) \# E_{\theta}(\Gamma) + (j-1) \# E_{\eta}(\Gamma)$ by
\[
\omega (K, \Gamma) = \bigwedge_{e\in E_{\eta}(\Gamma)} \phi_e^{\ast}\omega_{S^{j-1}} \bigwedge_{e\in E_{\theta}(\Gamma)} \phi_e^{\ast} \omega_{S^{n-1}}. 
\]
Here, the order of the wedge products is arbitrary, since both $\omega_{S^{j-1}}$ and $\omega_{S^{n-1}}$ are even forms when $n$ and $j$ are odd. We can use the order of edges, for example.
\end{definition}

\begin{rem}
The forms $\phi_e^{\ast}\omega_{S^{j-1}}$ and $\phi_e^{\ast} \omega_{S^{n-1}}$ are often called propagators. Leturcq \cite{Let} has given more flexible propagators to define invariants of embeddings into rational homology $n$-spheres.
\end{rem}

The next proposition ensures the convergence of integrals we later define.

\begin{prop} \cite{AS, Sin 2}, \cite[Proposition1.2]{BT}

\noindent There exists a canonical compactification $\overline{C}_{s,t}(K, \mathbb{R}^n)$ of $C_{s,t}(K, \mathbb{R}^n)$ such that $\phi_{ij}$ can be extended to the whole $\overline{C}_{s,t}(K, \mathbb{R}^n)$. The codimension 1 part of $\partial\overline{C}_{s, t}(K, \mathbb{R}^n)$ is 
\[
\bigsqcup_{A\subset \{1, \dots, s, s+1, \dots, s+t, \infty\}} \widetilde{C}_A (K, \mathbb{R}^n), 
\]
where $\widetilde{C}_A (K, \mathbb{R}^n)$ is ``the configuration space with $A$ being infinitely close''. (See subsection \ref{Construction of a correction term} for details.)
\end{prop}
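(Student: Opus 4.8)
The plan is to follow the standard Fulton--MacPherson/Axelrod--Singer compactification in the relative form developed by Bott and Taubes for configuration spaces attached to a knot. Rather than compactifying the mixed space $C_{s,t}(K,\mathbb{R}^n)$ directly, I would first compactify the two spaces appearing in the defining pullback square, namely $C_{s+t}(\mathbb{R}^n)$ and $C_s(\mathbb{R}^j)$, and then take an appropriate closure of the fibre product $C_{s+t}(\mathbb{R}^n)\times_{C_s(\mathbb{R}^n)}C_s(\mathbb{R}^j)$.

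First I would build the ambient factor $\overline{C}_{s+t}(\mathbb{R}^n)$ as the closure of the image of the embedding of $C_{s+t}(\mathbb{R}^n)$ into $(S^n)^{s+t}\times\prod_{i<k}S^{n-1}\times\prod[0,\infty]$, where the sphere coordinates record the pairwise directions and the interval coordinates record ratios of pairwise distances. Replacing $\mathbb{R}^n$ by $S^n=\mathbb{R}^n\cup\{\infty\}$ absorbs the non-compactness at infinity; because $K$ is a \emph{long} embedding it agrees with the standard linear embedding near $\infty$, so the point at infinity is added cleanly and accounts for the index $\infty$ in the boundary formula. Equivalently, $\overline{C}_{s+t}(\mathbb{R}^n)$ is obtained by iterated spherical blow-up of $(S^n)^{s+t}$ along all diagonals, performed in increasing order of dimension; I would do the same for the knot factor to get $\overline{C}_s(\mathbb{R}^j)$, and then define $\overline{C}_{s,t}(K,\mathbb{R}^n)$ as the closure of $C_{s,t}(K,\mathbb{R}^n)$ inside $\overline{C}_{s+t}(\mathbb{R}^n)\times_{\overline{C}_s(\mathbb{R}^n)}\overline{C}_s(\mathbb{R}^j)$, the fibre product taken along $K$.

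With this construction the extension of each direction map $\phi_{ij}$ is essentially automatic: the blow-up is designed precisely so that the a priori undefined limit $\tfrac{x_j-x_i}{\lVert x_j-x_i\rVert}$ on a diagonal becomes a genuine coordinate on the exceptional divisor, and the solid-edge maps extend for the same reason on the $\overline{C}_s(\mathbb{R}^j)$ factor. Hence every $\phi_{ij}$ extends continuously, indeed smoothly, to all of $\overline{C}_{s,t}(K,\mathbb{R}^n)$, which is a compact manifold with corners. I would then identify the codimension-one boundary: each codimension-one face arises from a single innermost degeneration, in which a subset $A$ of the indices becomes mutually infinitely close at a common rate (or, for $A\ni\infty$, escapes to infinity), with the finer relative configuration retained as a \emph{screen}. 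This produces exactly one stratum $\widetilde{C}_A(K,\mathbb{R}^n)$ for each admissible $A$, giving the asserted disjoint union.

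The hard part will be the bookkeeping forced by the mixed nature of the space: the points indexed by $\{1,\dots,s\}$ are constrained to lie on $K(\mathbb{R}^j)$ while those indexed by $\{s+1,\dots,s+t\}$ are free. When a colliding subset $A$ consists purely of knot points, the limiting screen must live in a copy of $\mathbb{R}^j$ recorded through the tangent directions along $K$ (this is what feeds the solid-edge propagators), whereas a collision involving free points, or all points converging onto one knot point, yields a screen in $\mathbb{R}^n$ tangent to $K$. Checking that these degenerations assemble into a clean manifold-with-corners structure, and that each admissible $A$ contributes \emph{precisely one} codimension-one face while the genuinely higher-codimension (``hidden'') degenerations do not, is the technical heart of the argument; it is carried out in detail in \cite{AS, Sin 2} and \cite[Proposition 1.2]{BT}, whose constructions I would invoke, with the description of each $\widetilde{C}_A(K,\mathbb{R}^n)$ deferred to subsection \ref{Construction of a correction term}.
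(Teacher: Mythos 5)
Your proposal is correct and takes essentially the same route as the paper, which gives no independent proof of this proposition but simply invokes \cite{AS, Sin 2} and \cite[Proposition 1.2]{BT}: your sketch (compactify the ambient and source configuration spaces in the style of Axelrod--Singer and Sinha, take the closure of the fibre product along $K$, and read off each codimension-one stratum from a single subset $A$ collapsing at a common rate or escaping to infinity) is precisely the construction those references carry out. The only point worth tightening is that even when $A$ consists purely of points on $K$ the limiting screen sits in $\mathbb{R}^n$ with its knot points constrained to the tangent $j$-plane $\mathrm{im}(dK)$ --- this is how the bundle $B_A \to \text{Inj}(\mathbb{R}^j,\mathbb{R}^n)$ of subsection \ref{Construction of a correction term} is set up --- but you flag exactly this bookkeeping as the technical heart and correctly defer it to the cited sources.
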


\begin{definition}
We classify $\widetilde{C}_A (K, \mathbb{R}^n) $ ($A\subset \{1, \dots, s, s+1, \dots, s+t, \infty\}$ and $\# A\geq 2$) into four types as follows ($\Gamma_A$ is the subgraph of $\Gamma$ generated by vertices of $A$).
\begin{itemize}
\item   Infinite faces: $\infty \in A$.
\item   Principal faces: $\# A = 2$.
\item   Hidden faces: $\# A \geq 3$ and $\Gamma_A$ is not the whole $\Gamma$; $\Gamma_A \subsetneq \Gamma$.
\item   Anomalous  faces: $\# A \geq 3$ and $\Gamma_A$ is the whole $\Gamma$; $ \Gamma_A = \Gamma$.
\end{itemize}
\end{definition}

\begin{notation}
We write $E_{s, t}(\mathbb{R}^j , \mathbb{R}^n)$ for the bundle over $\text{Emb}(\mathbb{R}^j, \mathbb{R}^n)$ whose fiber at a long embedding $K$ is the configuration space $C_{s,t}(K, \mathbb{ R}^n)$. More precisely, $E_{s, t}(\mathbb{R}^j , \mathbb{R}^n)$ is the pullback of the following diagram:
\begin{center}
\begin{tikzpicture}[auto]
\node (c) at (0, 2) {$E_{s,t}(\mathbb{R}^j, \mathbb{R}^n)$}; \node (d) at (5, 2) {$C_{s+t}(\mathbb{R}^n)$}; 
\node (a) at (0, 0) {$C_{s}(\mathbb{R}^j) \times \text{Emb}(\mathbb{R}^j , \mathbb{R}^n$)}; \node (b) at (5, 0) {$C_{s}(\mathbb{R}^n)$};

\draw[->] (c)  to (d);
\draw[->] (a) to node {evaluation} (b);
\draw [->] (c) to  (a);
\draw [->] (d) to  node {restriction} (b);
\end{tikzpicture}
\end{center}

By abusing notation, we also write $E_{s, t}(\mathbb{R}^j , \mathbb{R}^n)$ for the bundle whose fiber is $\overline{C}_{s,t}(K, \mathbb{R}^n)$. We write $\widetilde{E}_A(\mathbb{R}^j , \mathbb{R}^n)$ for the bundle over $\text{Emb}(\mathbb{R}^j, \mathbb{R}^n)$ whose fiber at a long embedding $K$ is the configuration space $\widetilde{C}_A(K, \mathbb{R}^n)$. 
\end{notation}

As in Definition \ref{Direction map}, we can construct the direction map $\phi_e: E_{s, t}(\mathbb{R}^j , \mathbb{R}^n) \rightarrow S^{n-1}$ or $\phi_e: E_{s, t}(\mathbb{R}^j , \mathbb{R}^n) \rightarrow S^{j-1}$ depending on the type (solid or dashed) of $e$.

\begin{notation}
We write $\omega(\Gamma)$ for the from on $E_{s, t}(\mathbb{R}^j , \mathbb{R}^n)$ defined  by pulling back the standard volume forms $\omega_{S^{n-1}}$ and $\omega_{S^{j-1}}$ (in the same way as Definition \ref{Propagator}).
\end{notation}

\begin{definition} [Configuration space integrals]
\label{configurationspaceintegras}
Let $C_{s, t}$ (and $\overline{C}_{s, t}$)  be a typical fiber of the bundle $\pi: E_{s, t}(\mathbb{R}^j , \mathbb{R}^n) \rightarrow \text{Emb}(\mathbb{R}^j, \mathbb{R}^n)$. Let $\Gamma$ be a labeled BCR graph. We define the configuration space integral associated with $\Gamma$ by the fiber integral
\begin{equation*}
I(\Gamma)= \int_{\overline{C}_{s, t}} \omega(\Gamma)= \int_{C_{s, t}} \omega(\Gamma).
\end{equation*}
The form $I(\Gamma)$ on $\text{Emb}(\mathbb{R}^j, \mathbb{R}^n)$ is sometimes written as $\pi_{\ast} \omega(\Gamma)$.
 \end{definition}

By Stokes' theorem (see \cite[Appendix]{BT} and  \cite[Appendix A.2]{Wat 1}), we have
\begin{align*}
&dI(\Gamma) \\
=& \pi_{\ast} d\omega(\Gamma) + (-1)^r \pi^{\partial}_{\ast} \omega(\Gamma) \\
= & \int_{\overline{C}_{s, t}} d \omega(\Gamma) + (-1)^r \int_{\partial \overline{C}_{s, t}} \omega(\Gamma)\\
= &(-1)^r \int_{\partial \overline{C}_{s, t}} \omega(\Gamma)\\
= &(-1)^r  \sum_A \int_{\widetilde{C}_{A}} \omega(\Gamma)
\end{align*}
where $r$ is the degree of $\int_{\partial \overline{C}_{s, t}} \omega(\Gamma)$. Hence for the closedness of $I(\Gamma)$, we should check that the integral over each $\widetilde{C}_A$ vanishes or is canceled.

Contributions of infinite faces vanish for dimensional reasons. (Recall that all our embeddings are standard outside a disk.) 

\begin{theorem}\cite[Theorem 5.10, Lemma 5.18]{Sak 2}
If $\infty \in A$, the integral $\int_{\widetilde{C}_A} \omega(\Gamma)$ vanishes.
\end{theorem}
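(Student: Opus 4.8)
The plan is to establish the vanishing by a dimension count on the infinite face $\widetilde{C}_A$, where the hypothesis that every long embedding agrees with the standard linear embedding $\iota$ outside a disk is the decisive ingredient. First I would describe the face geometrically: $\widetilde{C}_A$ consists of configurations in which the points indexed by $A \setminus \{\infty\}$ recede to infinity while staying infinitely close to one another, so that they converge to a single point $\theta$ of the sphere at infinity $S^{n-1}_{\infty}$. Since each black vertex of $\Gamma$ sits on the image of $K$, and $K = \iota$ near infinity, a receding black vertex must escape along the linear subspace $\mathbb{R}^j \subset \mathbb{R}^n$. Consequently, whenever $A$ meets $B(\Gamma)$, the common limit $\theta$ is forced onto the equatorial subsphere $S^{j-1}_{\infty} \subset S^{n-1}_{\infty}$, lowering the dimension of the escape locus by $n-j \geq 1$. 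This built-in loss of dimension, available only because of standardness at infinity, is what I expect to drive the argument.

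Next I would examine the direction maps $\phi_e$ on $\widetilde{C}_A$, separating the edges of $\Gamma$ into those internal to $A$, those internal to the complement of $A$, and the crossing edges with exactly one endpoint in $A$. For a crossing edge the two endpoints sit at infinitely different scales, so $\phi_e$ converges to $\pm\theta$, the single escape direction, independently of the fine relative configuration of the receding cluster. Hence every crossing dashed edge pulls back $\omega_{S^{n-1}}$ through the \emph{same} map, and every crossing solid edge pulls back $\omega_{S^{j-1}}$ through its $\mathbb{R}^j$-component; as soon as two crossing edges of the same type occur, the integrand contains $g^{\ast}(\omega \wedge \omega) = 0$ for one map $g$ into a single sphere, and therefore vanishes pointwise. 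The same phenomenon disposes of internal dashed edges joining two receding black vertices: at infinity these measure a direction lying in $\mathbb{R}^j$, so $\phi_e$ lands in the equator $S^{j-1}_{\infty}$ and $\phi_e^{\ast}\omega_{S^{n-1}} = 0$. These pointwise cancellations reduce the problem to faces attached to their complement by at most one dashed and one solid edge and carrying no such internal configuration.

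For the surviving faces I would finish with a global count. Writing $\deg\omega(\Gamma) = (n-1)\#E_{\theta}(\Gamma) + (j-1)\#E_{\eta}(\Gamma)$ and recalling that the fibre of $\widetilde{C}_A$ has dimension $j\#B(\Gamma) + n\#W(\Gamma) - 1$, I would show that on each such fibre the form $\omega(\Gamma)$ factors through a product of the escape sphere ($S^{j-1}_{\infty}$ when $A$ meets $B(\Gamma)$, and $S^{n-1}_{\infty}$ otherwise) with the reduced configuration space of the receding cluster and the configuration space of the bounded points. Using the collapse of the crossing edges to the constant $\theta$, together with the translation and scaling invariance of the internal propagators and the confinement of the black vertices to $\mathbb{R}^j$, this factoring space has dimension strictly smaller than the fibre dimension. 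Hence $\omega(\Gamma)$ carries no component of top degree along the fibre, and the fibre integral $\int_{\widetilde{C}_A}\omega(\Gamma)$ is zero.

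The main obstacle is this final count: one must confirm the strict inequality uniformly over all combinatorially distinct infinite faces, separately tracking the contributions of $B(\Gamma)$ versus $W(\Gamma)$ and of $E_{\eta}(\Gamma)$ versus $E_{\theta}(\Gamma)$, and treating faces joined to their complement by a single edge with particular care. I expect the argument to split according to whether the receding cluster meets $B(\Gamma)$: when it does, the confinement of the black vertices to $\mathbb{R}^j$ — that is, the hypothesis that $K$ is standard outside a disk — supplies the missing dimension, whereas for an all-white cluster the translation and scaling invariance of the internal propagators must be leveraged instead. Verifying that these two sources of dimensional deficiency together cover every infinite face is the crux of the proof.
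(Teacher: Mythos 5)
The paper itself does not prove this statement: it is imported verbatim from \cite[Theorem 5.10, Lemma 5.18]{Sak 2}, with only the remark that infinite faces die for dimensional reasons because long embeddings are standard outside a disk. Your sketch therefore has to stand on its own, and it contains a genuine gap at the start that propagates through the rest. The codimension-one face $\widetilde{C}_A$ with $\infty\in A$ is \emph{not} the locus where the points of $A\setminus\{\infty\}$ all converge to a single direction $\theta$ at infinity. In the compactification of configurations in $S^n=\mathbb{R}^n\cup\{\infty\}$, this face records a full ``screen'' of the escaping cluster --- the inverted positions $x_i/|x_i|^2$ modulo dilation, with $\infty$ pinned at the screen origin --- so the points may escape in different directions and at different (comparable) rates; the single-direction picture describes only a deeper corner stratum, of positive codimension in $\widetilde{C}_A$. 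Consequently the step you lean on most heavily --- that all crossing edges of a given type pull back the sphere volume form through one and the same map, so that two of them force $g^{\ast}(\omega\wedge\omega)=0$ pointwise --- fails on the generic point of the face: two crossing edges whose escaping endpoints are \emph{distinct} vertices have distinct limiting directions. The same issue affects your treatment of edges internal to the cluster.

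What does survive is the part that genuinely uses standardness at infinity: any dashed edge whose escaping endpoint is a black vertex, and whose other endpoint is either bounded or an escaping black vertex, has limiting direction in $S^{j-1}\subset S^{n-1}$, so its pullback of $\omega_{S^{n-1}}$ vanishes pointwise; since every black vertex carries a dashed edge, this disposes of every infinite face whose escaping cluster contains only black vertices. The remaining case --- clusters containing white vertices, whose screen positions are unconstrained in $\mathbb{R}^n$ and whose dashed edges have nondegenerate limits (even when the other endpoint is an escaping black vertex) --- is exactly where the translation/dilation invariance and the global dimension count must be carried out, and this is the part you explicitly defer as ``the crux of the proof.'' As written, the proposal asserts the strict inequality rather than establishing it, and the pointwise cancellations it does offer are not available on the full face. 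To repair it, replace the single-direction model by the screen description and then run the case analysis and count of \cite[Lemma 5.18]{Sak 2}, checking for each $A$ that the restriction of $\omega(\Gamma)$ to $\widetilde{C}_A$ factors through a space of fibrewise dimension strictly less than $j\,\#B(\Gamma)+n\,\#W(\Gamma)-1$.
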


On the other hand, contributions of principal faces are canceled when we sum up the configuration space integrals for graphs in a graph cocycle (with their coefficients).

\begin{theorem}\cite[Theorem 5.11]{Sak 2}
Let $\# A = 2$. The integral $\int_{\widetilde{C}_{A}} \omega(\Gamma)$ survives only when $A = e$ for some edge $e$ of $\Gamma$. Moreover, we have
\[
\int_{\widetilde{C}_{e}} \omega(\Gamma) = - \int_{C_{V(\Gamma/e)}} \omega(\delta_e\Gamma). \footnote{We follow the inward normal convention for orientations of boundaries, which causes the sign of the right-hand side.}
\]
Here $C_{V(\Gamma/e)}$ is the configuration space of vertices of $\Gamma/e$.
\end{theorem}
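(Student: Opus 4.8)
The plan is to analyze each principal face $\widetilde{C}_A$ (with $A=\{i,j\}$, $\#A=2$) as a sphere bundle and to compute $\int_{\widetilde{C}_A}\omega(\Gamma)$ by integrating first along the collision sphere. First I would describe the local structure of the blown-up face. When the two colliding vertices $i,j$ are both black, they are constrained to the knot $K$ and so collide along it; after blowing up the diagonal, the fiber over the merged configuration is the sphere $S^{j-1}$ of unit tangent directions to $K$. When at least one of $i,j$ is white, the collision is free in $\mathbb{R}^n$ and the fiber is $S^{n-1}$. In either case $\widetilde{C}_A$ is the total space of a bundle over the configuration space $C_{V(\Gamma/e)}$ in which $i$ and $j$ have merged into a single vertex, with fiber the collision sphere $S^{d}$, $d\in\{j-1,n-1\}$, and all direction maps $\phi_{e'}$ extend smoothly to it by the canonical compactification $\overline{C}_{s,t}$.

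Next I would split $\omega(\Gamma)$ according to this fibration and push forward along the fiber. The key observation is that for every edge $e'$ that does \emph{not} join $i$ and $j$, the direction map $\phi_{e'}$ extends continuously across the face with value determined by the merged configuration, so $\phi_{e'}^{\ast}\omega$ is pulled back from the base and carries no fiber (vertical) degree. Hence the only fiber degree in $\omega(\Gamma)|_{\widetilde{C}_A}$ comes from edges joining $i$ and $j$. This immediately yields the vanishing statement: if no edge of $\Gamma$ joins $i$ and $j$, then $\omega(\Gamma)|_{\widetilde{C}_A}$ has fiber degree $0$ while $\dim S^{d}=d\geq 1$, so the fiber integral is zero. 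Thus the face survives only when $A=e$ is a genuine edge of $\Gamma$.

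For the surviving case $A=e$ I would evaluate the fiber integral of the single edge form $\phi_e^{\ast}\omega$ by the type of $e$. If $e$ is solid (so $i,j\in B(\Gamma)$), then $\phi_e$ restricts on $S^{j-1}$ to the identity and $\int_{S^{j-1}}\phi_e^{\ast}\omega_{S^{j-1}}=1$; if $e$ is dashed and joins a white vertex to a black or white vertex, then $\phi_e$ restricts on $S^{n-1}$ to the identity and $\int_{S^{n-1}}\phi_e^{\ast}\omega_{S^{n-1}}=1$. In both cases the remaining horizontal factor is exactly $\omega(\Gamma/e)$ over the collapsed configuration space $C_{V(\Gamma/e)}$, matching cases (a) and (b) of the contraction. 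In the final case, $e$ dashed between two black vertices, $\phi_e$ factors on the fiber as $S^{j-1}\xrightarrow{dK}S^{n-1}$ through the tangent map of $K$, and the fiber integral $\int_{S^{j-1}}\phi_e^{\ast}\omega_{S^{n-1}}$ is the pushforward $(n-j)$-form attached to the small loop created by the contraction, matching case (c). So in every case one obtains $\int_{\widetilde{C}_e}\omega(\Gamma)=\pm\int_{C_{V(\Gamma/e)}}\omega(\Gamma/e)$, and it remains only to fix the sign, using the inward-normal convention together with the label/orientation data of $\Gamma$ to recover $\delta_e\Gamma=\sigma(e)\,\Gamma/e$ and the global factor $-1$.

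I expect the main obstacle to be the orientation and sign bookkeeping in this last step: one must compare the inward-normal boundary orientation of $\partial\overline{C}_{s,t}$ with the product orientation ``collapsed configuration space $\times$ collision sphere'', and then track how the ordering of $V(\Gamma)$, the edge orientations, and the determinant-line orientation of $\Gamma$ transform under collapsing $e$, in order to produce precisely the sign $\sigma(e)$ of the coboundary together with the overall $-1$. A secondary technical point, needed to justify the case analysis, is establishing rigorously that the blown-up face is a smooth $S^{d}$-bundle to which all $\phi_{e'}$ extend smoothly, and that in the dashed black--black case the tangential factorization through $dK$ identifies the fiber integral with the small-loop form rather than making it vanish.
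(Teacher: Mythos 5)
Your proposal is correct and follows essentially the same fiber-integration argument as the cited source: the paper itself does not reprove this statement but defers it to \cite[Theorem 5.11]{Sak 2}, where the principal face is likewise treated as a collision-sphere bundle ($S^{j-1}$ for two black vertices, $S^{n-1}$ otherwise) over $C_{V(\Gamma/e)}$, the non-collision edge forms are pulled back from the base so that a fiber-degree count gives the vanishing when $A$ is not an edge, and the three surviving cases (solid, dashed with a white endpoint, dashed black--black producing the small-loop $(n-j)$-form) match the contraction rules (a)--(c). The sign bookkeeping you defer is exactly the content of the inward-normal footnote together with the definition of $\sigma(e)$, so nothing essential is missing.
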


\section{Main Result}
\label{Main Result}

Recall $r: \overline{\text{Emb}}(\mathbb{R}^j, \mathbb{R}^n) \rightarrow \text{Emb}(\mathbb{R}^j, \mathbb{R}^n)$ is the natural projection defined in Definition \ref{embeddings modulo immersions}.

\begin{figure}[htpb]
\begin{center}
\includegraphics[width = 13cm]{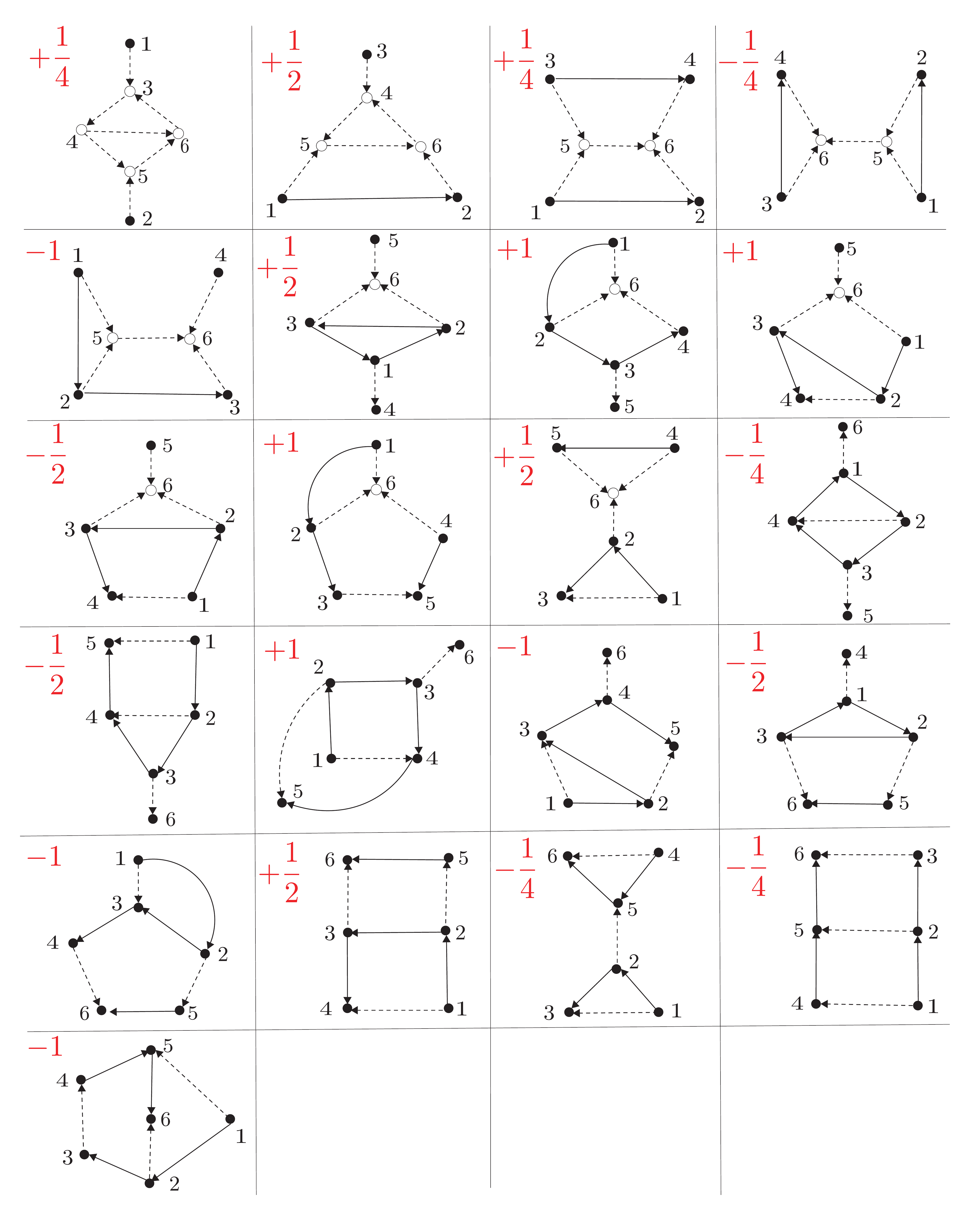}
\caption{Non-trivial graph cocycle $H$ (odd, odd)}
\label{2-loop graph cocycle of order 3}
\end{center}
\end{figure}

\begin{theorem} 
\label{main result 2}

Let $(n,j) = (\text{odd},  \text{odd})$ with $j \geq 3$. 
\begin{itemize}
\item[1] .
The linear combination of 2-loop BCR graphs of order 3
\[
 H = \sum_{i=1}^{21} w(\Gamma_i)\Gamma_i
\] 
is a graph cocycle. For the definition of  $\Gamma_i$ and its coefficient $w(\Gamma_i)$, see Figure \ref{2-loop graph cocycle of order 3}. 
\item [2]. 
Let $\overline{c}(H)$ be the correction term defined in Definition \ref{definition of the correction term}. Then
\[\overline{z}^3_2 = r^{\ast} I(H) + \overline{c}(H) \in \Omega^{3n-2j-7}_{dR} \overline{\text{Emb}}(\mathbb{ R}^j, \mathbb{R}^n)
\]
is closed and  $[\overline{z}^3_2] \in H^{3n-2j-7}_{dR} \overline{\text{Emb}}(\mathbb{R}^j, \mathbb{R}^n)$ is non-trivial. In fact we can explicitly construct  $\psi \in H_{3n-2j-7} \text{Emb}(\mathbb{R}^j, \mathbb{R}^n)$ and its lift $\overline{\psi}$, so that  the pairing $\overline{z}^3_2(\overline{\psi}) $ is non-trivial. See Figure \ref{Cycle psi} for $\psi$, which is constructed from the presentation in Figure \ref{Ribbon presentations P2}.
\item [3]. Moreover, if  $H^{3n-j-6}_{dR}(V_{n,j}(\mathbb{R})) = 0$, we can obtain a non-trivial element $z^3_2 \in H^{3n-2j-7}_{dR}(\text{Emb}(\mathbb{R}^j, \mathbb{R}^n))$. Here $V_{n,j} = V_{n,j}(\mathbb{R})$ is the real Stiefel manifold $\frac{SO(n)}{SO(n-j)}$. 
\end{itemize}
\end{theorem}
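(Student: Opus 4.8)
The plan is to prove the three parts in sequence, the whole argument being an instance of cocycle--cycle pairing. For Part 1, I would verify the cocycle condition $\delta H = 0$ directly: by Definition \ref{Coboundary operator}, $\delta H = \sum_i w(\Gamma_i)\sum_{e}\sigma(e)\,\Gamma_i/e$ is a signed sum of edge contractions landing in the defect-$1$, order-$3$, $2$-loop part of the complex. I would organize the computation by the isomorphism type of the contracted graph $\Gamma'=\Gamma_i/e$: for each target $\Gamma'$, its coefficient in $\delta H$ is a finite sum of terms $\pm w(\Gamma_i)$ over the pairs $(\Gamma_i,e)$ with $\Gamma_i/e\cong \Gamma'$. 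The content of Part 1 is that the $21$ weights $w(\Gamma_i)$ of Figure \ref{2-loop graph cocycle of order 3} are chosen exactly so that every such column sum vanishes; this is a finite linear-algebra check, using relations (2)--(4) of Definition \ref{admissible graphs} to normalize orientations and to kill double edges and double loops (the latter automatically, since $n,j$ are odd).

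For the closedness in Part 2, I would start from the Stokes formula already recorded, $dI(H)=(-1)^r\sum_A\int_{\widetilde C_A}\omega(H)$, and dispose of the boundary faces type by type. Infinite faces vanish by the cited theorem, and principal faces cancel because $H$ is a cocycle (Part 1 together with \cite[Theorem 5.11]{Sak 2}). The genuinely new input, relative to the $g\le 1$ situation of Theorem \ref{Sakai's result}, is the treatment of the hidden and anomalous faces of a $2$-loop graph: I would show the hidden-face contributions vanish by the standard vanishing lemmas (an involution/degree argument on $\widetilde C_A$ that forces the pulled-back form to be degenerate), and I would absorb the surviving anomalous faces into the correction term $\overline c(H)$ of Definition \ref{definition of the correction term}. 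This is precisely why the class is first produced on $\overline{\text{Emb}}$: there the composite to $\text{Imm}$ carries a canonical nullhomotopy, which is what lets $\overline c(H)$ be defined, and pulling back $I(H)$ along $r$ and adding $\overline c(H)$ yields the closed form $\overline z^3_2$. For non-triviality I would use the explicit generalized ribbon cycle $\psi$ built from a chord diagram on directed lines (Section \ref{Construction of the cycles}, Figures \ref{Cycle psi} and \ref{Ribbon presentations P2}) and its lift $\overline\psi$. The key point is that the fiber integral $\overline z^3_2(\overline\psi)$ localizes: as in Sinha's diagrammatic description of the configuration-space (co)homology pairing \cite{Sin}, the only contributing configurations are those in which the vertices of $H$ distribute along the ribbon according to a graph--diagram incidence, so $\overline z^3_2(\overline\psi)$ reduces to a finite pairing between $H$ and the chord diagram of $\psi$, which I would compute and show is nonzero, certifying $[\overline z^3_2]\neq 0$.

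For Part 3, I would descend from $\overline{\text{Emb}}$ to $\text{Emb}$ using the fibration $\overline{\text{Emb}}(\mathbb{R}^j,\mathbb{R}^n)\to\text{Emb}(\mathbb{R}^j,\mathbb{R}^n)\to\text{Imm}(\mathbb{R}^j,\mathbb{R}^n)$, where by Smale--Hirsch $\text{Imm}(\mathbb{R}^j,\mathbb{R}^n)\simeq\Omega^j V_{n,j}$. On $\text{Emb}$ itself the degree $3n-2j-7$ form $I(H)$ need not be closed: its differential is exactly the anomalous contribution, and by the Bott--Taubes mechanism this anomaly is the pullback, along the derivative (Gauss) map, of a fixed form depending only on the frame data. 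Thus the obstruction to closedness is governed by a class transgressed from $H^{3n-j-6}_{dR}(V_{n,j})$, the degrees matching after the $\Omega^j$ shift, $\left(3n-2j-6\right)+j = 3n-j-6 = \deg dI(H) + j$. Hence if $H^{3n-j-6}_{dR}(V_{n,j})=0$ the anomaly is exact, equal to $d\beta$ for a form $\beta$ pulled back from the immersion side, and $z^3_2:=I(H)-\beta$ is a closed form on $\text{Emb}$. Its class pairs nontrivially with $[\psi]=r_\ast[\overline\psi]$ because the terms distinguishing $z^3_2$ from $I(H)$, and $\overline z^3_2$ from $r^\ast I(H)$, are corrections pulled back from $\text{Imm}$ and so annihilate the immersion-trivial cycle $\psi$; non-triviality therefore descends from Part 2.

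The main obstacle I expect is the non-triviality computation of Part 2: it is where the entire apparatus of generalized ribbon cycles and the graph--diagram pairing must be made to function, and where one must verify that the localization genuinely isolates a single, explicitly computable combinatorial term rather than a sum liable to cancel. The second delicate point is the hidden- and anomalous-face analysis for $2$-loop graphs, precisely the regime left open by Theorem \ref{Sakai's result}; controlling those faces is what makes the correction term $\overline c(H)$, and hence the descent in Part 3, work at all.
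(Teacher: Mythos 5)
Your proposal follows the paper's own route essentially step for step: Part 1 by direct verification that the signed contraction sums vanish for each target graph, Part 2 by the Stokes/face analysis (infinite and principal faces as cited, hidden faces by involution and rescaling arguments, anomalous faces absorbed into $\overline{c}(H)$) together with non-triviality via the generalized ribbon cycle $\overline{\psi}$ and the localization of the fiber integral to a graph--chord-diagram pairing, and Part 3 by choosing a primitive of the anomaly form on $\text{Inj}(\mathbb{R}^j,\mathbb{R}^n)\simeq V_{n,j}$ in degree $3n-j-6$, exactly as in Proposition \ref{correction term1}. You also correctly identify the two genuinely delicate points (the $2$-loop hidden/anomalous face analysis and the cancellation $\overline{c}(H)(\overline{\psi})=0$ needed so that $\overline{z}^3_2(\overline{\psi})=I(H)(\psi)$), so the plan is sound and matches the paper.
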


\begin{rem} (See \cite[Theorem 3.14] {MT})
When $(n, j)=(\text{odd}, \text{odd})$,
\[
H^{\ast}_{dR}(V_{n,j}) \cong \bigwedge(e_{n-j}, e_{2(n-j)+3}, e_{2(n-j)+7},\dots, e_{2(n-j)+2j-3}).
\]
In particular if $j=3$, $H^{3n-j-6}_{dR}(V_{n,j}(\mathbb{R})) = 0$ holds. 
\end{rem}

\begin{rem}
A similar graph to the first graph $\Gamma_1$ of  Figure \ref{2-loop graph cocycle of order 3} appears in \cite[Table C]{AT2} as one of the rational homology generators of  $\mathcal{E}^{m,n}$, the graph complex of hairy graphs. One can also check that $HH^{m,n}$ has the last graph as one of the rational generators.
\end{rem}

We can take the above non-trivial cycle $\psi$ from the unknot component, and also over the 2-sphere when the codimension is 2.

\begin{cor}
\label{main corollary}
\[
\pi_2(\text{Emb} (\mathbb{R}^3, \mathbb{R}^5)_{\iota}) \otimes \mathbb{Q} \neq 0.
\]
\end{cor}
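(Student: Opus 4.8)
The plan is to deduce Corollary \ref{main corollary} directly from Theorem \ref{main result 2} by specializing to the codimension-two case $(n,j)=(5,3)$ and translating the homological non-triviality statement into a statement about a rational homotopy group. First I would check the numerics: with $n=5$, $j=3$ we have $3n-2j-7 = 15-6-7 = 2$, so the non-trivial class $\psi \in H_{3n-2j-7}\,\text{Emb}(\mathbb{R}^j,\mathbb{R}^n)$ furnished by part 3 of the theorem lives in $H_2(\text{Emb}(\mathbb{R}^3,\mathbb{R}^5))$. Before invoking part 3 I must verify its hypothesis $H^{3n-j-6}_{dR}(V_{n,j}) = 0$; here $3n-j-6 = 15-3-6 = 6$, and since $j=3$ the remark following the theorem states precisely that $H^{3n-j-6}_{dR}(V_{n,j}) = 0$ in this case. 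Thus part 3 applies and yields a non-trivial class $z^3_2 \in H^{2}_{dR}(\text{Emb}(\mathbb{R}^3,\mathbb{R}^5))$ pairing non-trivially with $\psi$, so $H_2(\text{Emb}(\mathbb{R}^3,\mathbb{R}^5);\mathbb{R}) \neq 0$.

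The second step is to upgrade this non-vanishing of $H_2$ with real coefficients to the non-vanishing of $\pi_2 \otimes \mathbb{Q}$. The natural route is the Hurewicz map together with the fact, recorded in the theorem, that the cycle $\psi$ is taken from the unknot component $\text{Emb}(\mathbb{R}^3,\mathbb{R}^5)_{\iota}$ and is moreover realized over the $2$-sphere. Concretely, I would exhibit $\psi$ as the image of a map $f: S^2 \to \text{Emb}(\mathbb{R}^3,\mathbb{R}^5)_{\iota}$ (the $S^2$-family coming from the construction in Figure \ref{Cycle psi}), so that $\psi = f_*[S^2]$ in $H_2$. Since $f_*[S^2] \neq 0$ in $H_2(\,\cdot\,;\mathbb{R})$, the class $[f] \in \pi_2$ cannot be torsion: if it were, some multiple would be null-homotopic and would therefore push $[S^2]$ to a boundary, contradicting $z^3_2(\psi) \neq 0$. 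Hence $[f]$ has infinite order and $\pi_2(\text{Emb}(\mathbb{R}^3,\mathbb{R}^5)_{\iota}) \otimes \mathbb{Q} \neq 0$, which gives the corollary.

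The main obstacle I anticipate is not the numerology but ensuring that $\psi$ genuinely arises as a \emph{spherical} class, i.e.\ as the image of the fundamental class of $S^2$ under an honest map into the unknot component, rather than merely as some abstract $2$-cycle in $H_2$. The homological pairing $z^3_2(\psi)\neq 0$ only sees $H_2$, and to pass to $\pi_2 \otimes \mathbb{Q}$ one needs the cycle to be detected through the Hurewicz image. This is why the theorem's assertion that the family is constructed ``over the $2$-sphere'' (from the chord-diagram/ribbon presentation of Figure \ref{Ribbon presentations P2}) is essential: it is exactly the input that makes $\psi$ spherical. I would therefore spend the bulk of the argument confirming that the generalized ribbon cycle underlying $\psi$ is parametrized by $S^2$ and lands in $\text{Emb}(\mathbb{R}^3,\mathbb{R}^5)_{\iota}$, after which the Hurewicz-plus-pairing argument closes the proof immediately.
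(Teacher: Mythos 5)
Your overall skeleton matches the paper's: pair the cocycle with $\psi$ to get a non-zero class in degree $3n-2j-7=2$, then argue that the detected cycle is spherical and lies in the unknot component, so the Hurewicz image of $\pi_2(\text{Emb}(\mathbb{R}^3,\mathbb{R}^5)_\iota)$ is rationally non-trivial. You also correctly identify sphericality as the crux. The problem is that your plan for resolving it rests on a false premise: the generalized ribbon cycle $\psi$ built from the chord diagram $C_2$ is \emph{not} parametrized by $S^2$. Its parameter space is $S^{j-1}\times (S^{n-j-2})^3$, and in codimension two $S^{n-j-2}=S^0$ is two points, so the domain is a disjoint union of \emph{eight} copies of $S^2$. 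As constructed, $\psi$ is the sum of eight spherical cycles, and the non-vanishing of $\overline{z}^3_2(\overline{\psi})$ only tells you that at least one of them is homologically non-trivial a priori in $H_2$, not which one, and not that a single map $S^2\to\text{Emb}(\mathbb{R}^3,\mathbb{R}^5)_\iota$ carries the class. ``Confirming that $\psi$ is parametrized by $S^2$'' is therefore not something you can do; you need an additional argument.

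The paper supplies exactly this missing step: writing $S^0=\{x_3=0,\,x_3=2\}$, it decomposes $\overline{\psi}=\sum_{\varepsilon_i\in\{0,2\}}\overline{\psi}(\varepsilon_1,\varepsilon_2,\varepsilon_3)$ and observes that every component with some $\varepsilon_i=2$ (i.e.\ with at least one crossing resolved) is a \emph{degenerate} cycle, hence contributes zero to $H_2$. Consequently the whole class is carried by the single component $\overline{\psi}(0,0,0):S^{j-1}\to\overline{\text{Emb}}(\mathbb{R}^3,\mathbb{R}^5)$, which Proposition \ref{unknot component} places in the unknot component; combined with Theorem \ref{pairing 2} this gives the non-torsion element of $\pi_2$. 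Without this degeneracy argument your proof does not close. A secondary, smaller point: the pairing the paper actually computes is $\overline{z}^3_2(\overline{\psi})=1$ on $\overline{\text{Emb}}$, not $z^3_2(\psi)$ on $\text{Emb}$; your route through part 3 of Theorem \ref{main result 2} (using $H^{6}_{dR}(V_{5,3})=0$) is a reasonable way to work on $\text{Emb}$ directly, but you would then still owe a verification that the corrected cocycle on $\text{Emb}$ pairs non-trivially with $\psi$, which is not literally what Theorem \ref{pairing 2} states.
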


\section{Construction of  the cocycles}
\label{Construction of the cocycles}

In this section, we show $r^{\ast} I(H) + \overline{c}(H)$ is closed. According to the paragraphs after Definition \ref{configurationspaceintegras}, what to show is the vanishing of contributions of principal, hidden and anomalous faces.

\subsection{H is a graph cocycle}
\label {H is a graph cocycle}

\qquad We can show that $H$ is a graph cocycle, by direct computation of the coboundary map $\delta$ (defined in Definition \ref{Coboundary operator}). In this computation, we need to put together isomorphic graphs with different labels, taking their orientations into account. This will be the most difficult part when computing the graph cohomology through computer calculous. 

Note that for the top degree, $\delta$ is described by a matrix (See Figure \ref{matrix}). 
We can easily see that the matrix is decomposed into blocks depending on the type of edges contracted:
(A) \dashededgea{} (B) \dashededgeb{} (C) \dashededgec{} (D) \solidedge{}.

\begin{figure}[htpb]

\labellist
\pinlabel $\#W=4$ at 320 1950
\pinlabel $\#W=0$ at 1080 1950
\endlabellist

\centering
\includegraphics[width=6cm]{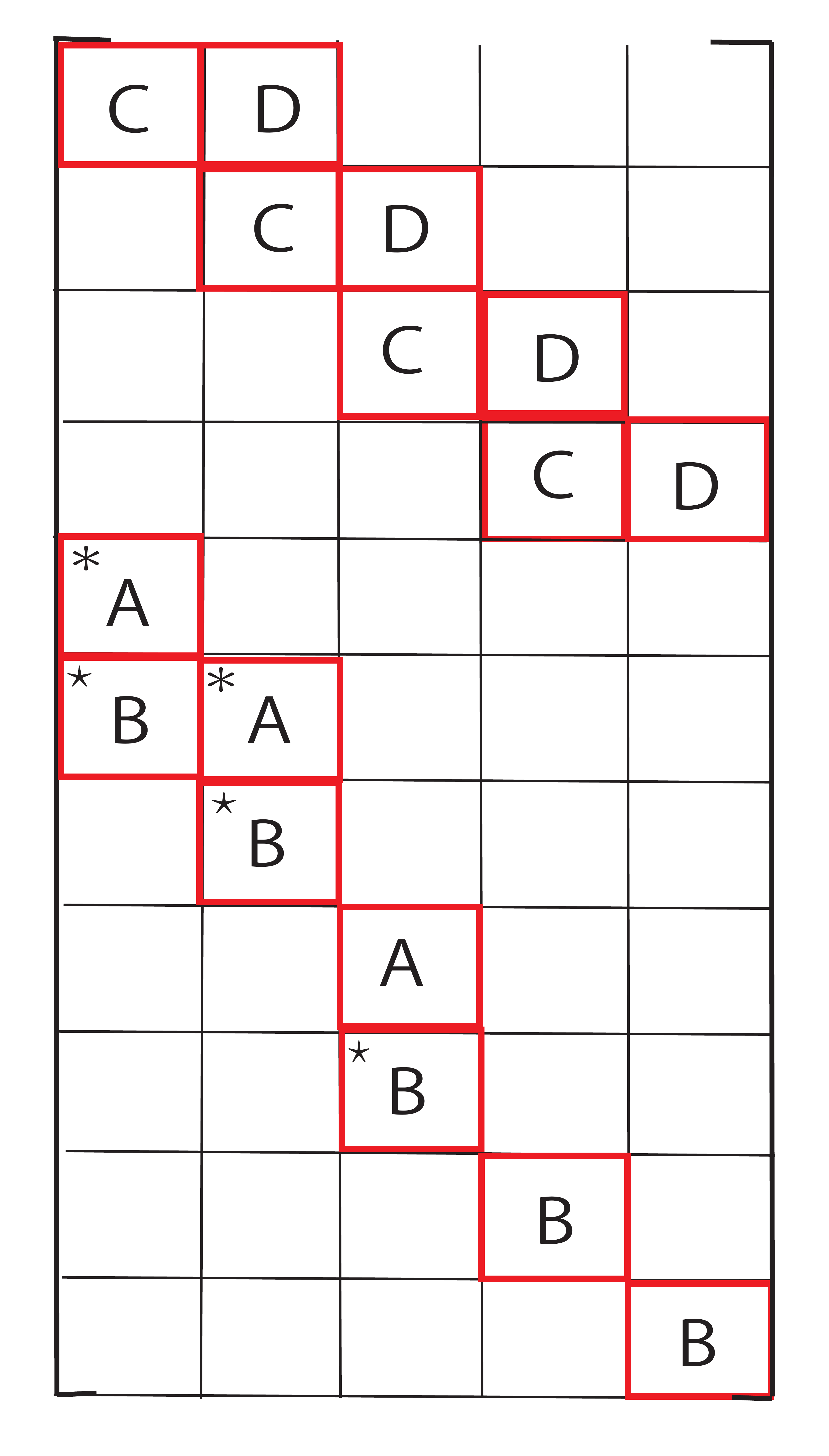}
\caption{An example of the matrix of  $\delta$. For our cocycle $H$, the blocks with $\ast$ are $0$. The blocks with $\star$ do not appear.}
\label{matrix}
\centering
\end{figure}


\subsection{Vanishing of contributions of hidden faces}
\label{Vanishing of contributions of hidden faces}

\qquad Here, we show that contributions of hidden faces vanish. The fundamental approach is the following proposition. The cancellation using the symmetry as below was first introduced by Kontsevich $\cite{Kon}$. Lescop gave a detailed explanation of it in  \cite{Les 1, Les 2}.

\begin{prop}
\label{basic case}
Let $s=\#B(\Gamma), t=\#W(\Gamma)$ and let $\widetilde{C}_A\subset\partial C_{s,t}(K, \mathbb{R}^n)$ be a hidden face. If the subgraph $\Gamma_A$ of $\Gamma$ satisfies one of the following conditions, the contribution of the integral over $\widetilde{C}_A$ vanishes. (Below $u$ and $b$ stand for univalent and bivalent, respectively).
\begin{itemize}
\item [(1)] $\Gamma_A$ is not connected.
\item [(2)] $\Gamma_A$ has a bivalent vertex of the following form. (Gray vertices can be both black and white.)
\tikzset{every picture/.style={line width=0.75pt}}  
\begin{center}
\begin{tikzpicture}[x=0.75pt,y=0.75pt,yscale=0.6,xscale=0.6]

\draw [color={rgb, 255:red, 0; green, 0; blue, 0 }  ,draw opacity=1 ]  [fill={rgb, 255:red, 0; green, 0; blue, 0 }  ,fill opacity=1 ] (0, 0) circle (5);
\draw (0,-40) node {$v_1$ (b)};

\draw  (-100,-100)--(-5,-5);
\draw [color={rgb, 255:red, 0; green, 0; blue, 0 }  ,draw opacity=1 ]  [fill={rgb, 255:red, 0; green, 0; blue, 0 }  ,fill opacity=1 ] (-100, -100) circle (5);
\draw (-100,-130) node {$v_2$};

\draw (5,-5)--(100,-100);
\draw [color={rgb, 255:red, 0; green, 0; blue, 0 }  ,draw opacity=1 ]  [fill={rgb, 255:red, 0; green, 0; blue, 0 }  ,fill opacity=1 ] (100, -100) circle (5);
\draw (100,-130) node {$v_3$};

\draw [color={rgb, 255:red, 0; green, 0; blue, 0 }  ,draw opacity=1 ] (250, 0) circle (5);
\draw (250,-40) node {$v_1 (b)$};

\draw [dash pattern={on 5pt off 4pt}]  (150,-100)--(245,-5);
\draw [color=gray ]  [fill=gray  ,fill opacity=1 ] (150, -100) circle (5);
\draw (150, -130) node  {$v_2 $};

\draw [dash pattern={on 5pt off 4pt}] (255,-5)--(350,-100);
\draw [color= gray ]  [fill=gray  ,fill opacity=1 ] (350, -100) circle (5);
\draw (350, -130) node  {$v_3$};
\end{tikzpicture}
\end{center}

\item [(3)] $\Gamma_A$ has a univalent vertex of the following form.
\tikzset{every picture/.style={line width=0.75pt}}  
\begin{center}
\begin{tikzpicture}[x=0.75pt,y=0.75pt,yscale=0.6,xscale=0.6]

\draw  (0,100)--(0,0);
\draw [color={rgb, 255:red, 0; green, 0; blue, 0 }  ,draw opacity=1 ] [fill={rgb, 255:red, 0; green, 0; blue, 0 }  ,fill opacity=1 ] (0, 100) circle (5);
\draw (40,100) node {$v_1 (u)$};
\draw (20,50) node {$e$};

\draw [dash pattern={on 5pt off 4pt}] (100,95)--(100,0);
\draw [color={rgb, 255:red, 0; green, 0; blue, 0 }  ,draw opacity=1 ] (100, 100) circle (5);
\draw (140,100) node {$v_1 (u)$};
\draw (120,50) node {$e$};

\end{tikzpicture}
\end{center}
\end{itemize}
\end{prop}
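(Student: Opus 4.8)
\emph{Proof plan.} The plan is to reduce the vanishing of each $\int_{\widetilde{C}_A}\omega(\Gamma)$ to a fiberwise statement and then dispatch the three cases by two mechanisms: a degeneracy-of-the-form argument for (1) and (3), and a Kontsevich-type involution for (2). First I would recall the structure of a hidden face: $\widetilde{C}_A$ fibers over the configuration space obtained by collapsing $A$ to a single point, and its fiber is the compactified space of \emph{infinitesimal} configurations of the points of $A$, taken modulo an overall translation and an overall scaling (with black vertices confined to the tangent $\mathbb{R}^j$ of the knot and white vertices free in $\mathbb{R}^n$). Along such a fiber, every edge joining $A$ to its complement contributes a direction map that is constant, since it only sees the macroscopic collapse point; so the only part of $\omega(\Gamma)$ that varies is $\omega(\Gamma_A)$, the product of the propagators of the edges of $\Gamma_A$. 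Hence it suffices to show that the integral of $\omega(\Gamma_A)$ over the fiber vanishes pointwise on the base.

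For cases (1) and (3) I would argue that $\omega(\Gamma_A)$ is a \emph{degenerate} top-degree form on the fiber, i.e. it misses a coordinate differential, and is therefore identically zero there. In case (1), write $\Gamma_A=\Gamma_{A_1}\sqcup\Gamma_{A_2}$; since only the overall translation is quotiented out, the relative displacement of the two clusters furnishes at least one free direction on the fiber, and $\omega(\Gamma_A)=\omega(\Gamma_{A_1})\wedge\omega(\Gamma_{A_2})$ is constant along it because there are no edges between the clusters. In case (3), let $e=(v_1,w)$ be the unique edge of $\Gamma_A$ at the univalent vertex $v_1$; because $\#A\geq 3$ one may normalize the scaling using the remaining points, so that the distance $|u_{v_1}-u_w|$ is a free coordinate, while $\omega(\Gamma_A)$ depends on $v_1$ only through the direction $\phi_e=\tfrac{u_{v_1}-u_w}{|u_{v_1}-u_w|}$. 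In both cases the top form lacks a differential and the fiber integral vanishes.

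For case (2) I would use the involution $\tau$ reflecting the bivalent vertex through the midpoint of its two neighbours, $u_{v_1}\mapsto u_{v_2}+u_{v_3}-u_{v_1}$, performed in $\mathbb{R}^j$ when $v_1$ is black with solid edges and in $\mathbb{R}^n$ when $v_1$ is white with dashed edges. This map commutes with translation and scaling, hence descends to a fiberwise involution. Writing $d\in\{j,n\}$ for the relevant dimension, $\tau$ swaps the two direction maps and sends each to its antipode, so each propagator is pulled back by the antipodal map; since $n,j$ are odd the spheres $S^{d-1}$ are even-dimensional, the antipodal degree is $(-1)^d=-1$, and combining the two sign flips with the (even-degree) transposition of the two factors gives $\tau^{\ast}\omega(\Gamma_A)=\omega(\Gamma_A)$. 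On the other hand $\tau$ is a reflection in the odd-dimensional $\mathbb{R}^d$, so it reverses the orientation of the fiber. Therefore the fiber integral equals $\int\tau^{\ast}\omega(\Gamma_A)=-\int\omega(\Gamma_A)$, forcing it to vanish.

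The step I expect to be the genuine obstacle is the careful sign and orientation bookkeeping on the compactified fiber: checking that $\tau$ and the degeneracy coordinates extend across the blow-up and respect its boundary strata, orienting the infinitesimal configuration space compatibly with the quotient by translation and scaling, and verifying the parity computations when $A$ mixes black and white vertices, so that the fiber is a product of an $\mathbb{R}^j$-type and an $\mathbb{R}^n$-type factor. These are precisely the places where the hypothesis that $n$ and $j$ are odd is used, and I would lean on the symmetry arguments of Kontsevich and the detailed treatment of Lescop to make them rigorous.
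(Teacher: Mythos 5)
Your proposal is correct and follows essentially the same route as the paper: the reduction to the fiber of infinitesimal configurations, the translation/scaling degeneracy arguments for (1) and (3), and the midpoint involution $u_{v_1}\mapsto u_{v_2}+u_{v_3}-u_{v_1}$ with the parity count for (2) are exactly the mechanisms used in the paper's (much terser) proof. The extra care you flag about orientations on the compactified fiber is the standard Kontsevich/Lescop bookkeeping the paper also defers to.
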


\begin{proof}
For (2), we use the involution of $v_1$ with respect to the middle point of $v_2$ and $v_3$:
\[
v_1 \mapsto v_2+v_3-v_1.
\]
When $(n, j) = $ (odd, odd), this involution reverses the orientation, while the integrand form does not change. 

For (3), the group $\mathbb{R}_+$ acts on $\widetilde{C}_A$ by rescaling the edge $e$. Since the form $\omega(\Gamma)$ can be constructed through the projection 
\[
\widetilde{C}_A \rightarrow \widetilde{C}_A/{\mathbb{R}_+},
\]
where the quotient space on the right-hand side has less dimension, we can see the integral over $\widetilde{C}_A$ is zero.

For (1), we can use the action of $\mathbb{R}_+$ that increases or decreases the distance between two components. 

\end{proof}

Unfortunately, our graph cocycle $H$ has hidden faces that satisfy none of the above conditions. However, since $(n, j) = (\text{odd}, \text{odd})$ we can apply the following proposition that was shown in \cite{Sak 2}. 

\begin{prop}
\begin{itemize}
\item
If  $n-j =$ even and $\Gamma_A$ is 0-loop (namely, $b_1(\Gamma_A) = 0$), the contribution of $\widetilde{C}_A$ vanishes. 
\item
If  $(n, j) = $ (odd, odd) and $\Gamma_A$ is 1-loop (namely, $b_1(\Gamma_A) = 1$),  the contribution of $\widetilde{C}_A$ vanishes. 
\end{itemize}
\end{prop}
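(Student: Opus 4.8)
The plan is to reduce the vanishing of $\int_{\widetilde{C}_A}\omega(\Gamma)$ to the three basic mechanisms of Proposition \ref{basic case} by a combinatorial analysis of the isomorphism type of $\Gamma_A$, supplemented by a single degeneracy argument for the one leftover configuration. First I would invoke Proposition \ref{basic case}(1) to assume $\Gamma_A$ is connected, so that $b_1(\Gamma_A)=\#E(\Gamma_A)-\#V(\Gamma_A)+1$ and the hypothesis on $b_1$ becomes a relation between the number of vertices and edges. Throughout, the key principle is that a univalent vertex is handled by rescaling (case (3)), a bivalent vertex with two edges of equal type is handled by the reflection involution (case (2)), and the parities of $n$ and $j$ enter only through the latter.

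For the $0$-loop case, $\Gamma_A$ is a tree, and since a hidden face has $\#A\geq 3$ this tree has at least one leaf. A leaf is a univalent vertex of $\Gamma_A$, so Proposition \ref{basic case}(3) applies and the contribution vanishes. The rescaling argument behind (3) uses only the scale-invariance of the direction forms, so it is valid under the stated hypothesis $n-j$ even (in fact for any parities); this already disposes of the first bullet.

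For the $1$-loop case we have $\#E(\Gamma_A)=\#V(\Gamma_A)$, hence $\sum_{v}\mathrm{val}(v)=2\,\#V(\Gamma_A)$. If some vertex is univalent, case (3) finishes the argument. Otherwise every vertex is bivalent and $\Gamma_A$ is a single embedded cycle, of length at least $3$ since double edges are excluded by the relations of Definition \ref{admissible graphs}. I then split on the edge types around the cycle. (a) If some bivalent vertex has its two edges of equal type --- for instance any white vertex of the cycle (which carries only dashed edges), any black vertex flanked by two solid or by two dashed edges, or any vertex of a cluster that collapses off $K$ (necessarily all-white and all-dashed) --- then Proposition \ref{basic case}(2) applies verbatim, and this is precisely the step that consumes the hypothesis $(n,j)=(\text{odd},\text{odd})$, through the orientation-reversing reflection of the offending vertex. (b) Otherwise the types alternate solid/dashed around the cycle; then every vertex is incident to a solid edge and is therefore black, so the whole cluster collapses onto a single point $p\in K$ and, on the corresponding tangential face, its infinitesimal configuration lies in $T_pK\cong\mathbb{R}^j$. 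Consequently each dashed edge of the cycle joins two infinitesimally close points of $K$, so its direction is confined to $S^{j-1}\subset S^{n-1}$ and the pulled-back top form $\phi_e^{\ast}\omega_{S^{n-1}}$ restricts to zero on the face (because $n-1>j-1$). Since an alternating cycle contains at least one dashed edge, the integrand $\omega(\Gamma)$ vanishes identically on $\widetilde{C}_A$.

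The genuinely delicate points, which I expect to be the main work, are the following. First, in case (a) one must verify that the reflection reverses the orientation of $\widetilde{C}_A$ in every combination of vertex colour and edge type (reflection of a white vertex in $\mathbb{R}^n$ versus a black vertex in the tangent plane $\mathbb{R}^j$), while the two equal propagators are preserved; this sign bookkeeping is exactly what forces \emph{both} $n$ and $j$ to be odd rather than only $n-j$ to be even. Second, in case (b) one must make the degeneracy precise on the compactified boundary face, namely that a dashed edge between two black vertices becomes $S^{j-1}$-valued in the tangential blow-up, so that the integrand is honestly zero there and not merely small. By contrast, the combinatorial reduction itself (connectivity, the Euler-characteristic count, and the dichotomy between cycles with a same-type vertex and alternating all-black cycles) is routine, and the whole scheme follows the argument of Sakai \cite{Sak 2}; self-loops, which are treated as points and do not contribute to $b_1(\Gamma_A)$, are incorporated into the same case analysis.
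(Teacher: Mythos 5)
Your treatment of the $1$-loop case breaks down exactly at the configuration you set aside as case (b), and this is not a peripheral leftover: for a defect-$0$ graph every black vertex has exactly one dashed edge, so an alternating all-black cycle is the \emph{generic} $1$-loop subgraph, and it is precisely the case the proposition is needed for. The claim that $\phi_e^{\ast}\omega_{S^{n-1}}$ restricts to zero on $\widetilde{C}_A$ is false. On that face the direction of a dashed edge between two collapsing black vertices is $I(u_b-u_a)/\lVert I(u_b-u_a)\rVert$ with $I=D_pK$, and the $j$-plane $\mathrm{Im}(I)$ varies with the collapse point $p$ and with the embedding; the direction is therefore not confined to a fixed $S^{j-1}$, and once the variation of $I$ is taken into account the map to $S^{n-1}$ is generically a submersion, so the pullback of $\omega_{S^{n-1}}$ does not vanish on the face. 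Nor does a fiberwise degree count save you: each dashed edge can still contribute $j-1$ screen directions (with the remaining $n-j$ directions absorbed by the base), and for an alternating cycle with $2m$ vertices this gives $m(j-1)+m(j-1)=2m(j-1)$ hmm--- more concretely, for a square with $n=5$, $j=3$ the screen has dimension $8$ and the component of $\omega(\Gamma_A)$ with two screen directions from each of the four edges has exactly screen degree $8$, so the fiber integral is an a priori nonzero form on the base. A smaller instance of the same soft spot occurs in your univalent reductions (both bullets): Proposition \ref{basic case}(3) covers a black vertex with a solid edge and a white vertex with a dashed edge, but not a black leaf attached by a dashed edge, since rescaling along that edge does not preserve the constraint that the vertex stay on $K$ when its neighbour is white. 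The fact that the first bullet carries the hypothesis ``$n-j$ even'' at all should have signalled that pure rescaling cannot be the whole story there either.

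The paper avoids the case analysis entirely in the $1$-loop case by one global symmetry: reflect \emph{all} vertices of $A$ through a single black vertex $x_1$, i.e.\ $x_i\mapsto 2x_1-x_i$ for black vertices and $x_{s+m}\mapsto 2\iota(x_1)-x_{s+m}$ for white ones, where $\iota=D_pK$. Every propagator of $\Gamma_A$ (solid or dashed, including those between two black vertices) is composed with the antipodal map, so the integrand is multiplied by $(-1)^{n\#E_{\theta}(\Gamma_A)+j\#E_{\eta}(\Gamma_A)}=(-1)^{\#E(\Gamma_A)}$ when $n,j$ are odd, while the orientation of the screen changes by $(-1)^{j(\#B(\Gamma_A)-1)+n\#W(\Gamma_A)}=(-1)^{\#V(\Gamma_A)-1}$; since a connected $1$-loop subgraph has $\#E(\Gamma_A)=\#V(\Gamma_A)$, the net sign is $-1$ and the contribution vanishes, with the alternating all-black cycle covered automatically. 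You should replace your cases (a) and (b) by this involution; your case (b) as written cannot be repaired by a dimension argument.
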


\begin{rem}
The second vanishing argument is also useful to cancel the contribution of the anomalous face, when $(n,j)$ = (odd, odd) and the first Betti number of the graph is odd.
\end{rem}

\begin{proof}
The first statement is a consequence of similar involutions and rescaling to those of Proposition \ref{basic case}. 
We review the involution used to prove the second statement. This is an involution of all the vertices of $A$ with respect to one black vertex of $A$:
\begin{align*}
&(x_1, x_2, \dots, x_s, x_{s+1}, x_{s+2}, \dots, x_{s+t}) \\
\mapsto & (x_1, 2 x_1-x_2, \dots, 2x_1-x_s, 2\iota(x_1)-x_{s+1}, 2 \iota(x_1)-x_{s+2}, \dots, 2\iota(x_1)-x_{s+t}).
\end{align*}
Here, $\iota: \mathbb{R}^j \rightarrow \mathbb{R}^n$ is the linear injective map defined by the differential of the embedding. 
(If all the vertices of $A$ are white, take a white vertex as the central vertex~$x_1$.)
\end{proof}

If a subgraph $\Gamma_A$ of $\Gamma_i$ in the cocycle $H$ is 2-loop, it satisfies one of (1) (2) (3). Hence, it follows that all the contributions of hidden faces of $H$ do vanish.

\subsection{Construction of correction terms for anomalous faces}
\label{Construction of a correction term}

We now proceed to construct a correction term to cancel the contribution of the integral over the anomalous face $\widetilde{C}_{V(\Gamma)}$. If $H^{\ast}_{dR}(V_{n,j}(\mathbb{R}))$ vanishes on a certain degree, we can construct a correction term $c$ as a form on $\text{Emb}(\mathbb{R}^j, \mathbb{R}^n)$. Otherwise we construct one on $\overline{\text{Emb}}(\mathbb{R}^j, \mathbb{R}^n)$, the space of long embeddings modulo immersions. Sakai in $\cite{Sak 2}$ follows the first approach, while Sakai and Watanabe do the second one in $\cite{SW}$.

For the above purpose, we first describe the structure of each codimension one face $\widetilde{C}_A.$  Readers can also refer to \cite{Bot, CCL, Sak 2} for more detailed explanations. We only describe it when  the subset $A\subset V(\Gamma)=\{1, \dots, s, s+~1, \dots, s+~t\}$ has at least one element  of $ \{1, \dots, s\}$. The anomalous face is the case where $A$ is equal to the entire $V(\Gamma)$.

The space $\widetilde{E}_A(\mathbb{R}^j, \mathbb{R}^n)$ is defined as the pullback of the following diagram.  The configuration space $\widetilde{C}_A(K, \mathbb{R}^n)$ is the fiber  of the projection $\widetilde{E}_A(\mathbb{R}^j, \mathbb{R}^n) \rightarrow \text{Emb}(\mathbb{R}^j, \mathbb{R}^n)$ at an embedding $K$.
\begin{center}
\begin{tikzpicture}[auto]
\node (c) at (0, 2) {$\widetilde{E}_A(\mathbb{R}^j, \mathbb{R}^n)$}; \node (d) at (3, 2) {$B_A$}; 
\node (a) at (0, 0) {$E_{V/A}(\mathbb{R}^j, \mathbb{R}^n)$}; \node (b) at (3, 0) {$\text{Inj}(\mathbb{R}^j, \mathbb{R}^n)$};
\node (e) at (0, -1) {$\text{Emb}(\mathbb{R}^j, \mathbb{R}^n)$}; 

\draw[->] (c)  to (d);
\draw[->] (a) to node {$\scriptstyle D$} (b);
\draw [->] (c) to  (a);
\draw [->] (d) to  node {$b$} (b);
\draw [->] (a) to  node {$\pi$} (e); 
\end{tikzpicture}
\end{center}
Here, {$\text{Inj}(\mathbb{R}^j, \mathbb{R}^n)$} is the space of linear injective maps from $\mathbb{R}^j$ to $\mathbb{R}^n$. 
$E_{V/A}(\mathbb{R}^j, \mathbb{R}^n)$ is the bundle over $\text{Emb}(\mathbb{R}^j, \mathbb{R}^n)$ whose fiber is the configuration space $C_{V/A}$ of $V(\Gamma)/A$. $B_A$  is the bundle over $\text{Inj}(\mathbb{R}^j, \mathbb{R}^n)$ with the``normalized configuration space'' of $A$ as the fiber. See Notation \ref{normalized configuration space}. 
$D$  is a map from $E_{V/A}(\mathbb{R}^j, \mathbb{R}^n)$  to $\text{Inj} (\mathbb{R}^j ,\mathbb{R}^n) $ which assigns the differential at the collapsed point $A/A$.

\begin{notation}
\label{normalized configuration space}
We define the configuration spaces $C_{V/A}$ and $B_A$ as follows.
Write $V(\Gamma)/A $ and $A$ as
\begin{align*}
V(\Gamma)/A &= \{i_1, \dots, i_{s'}, i_{s'+1}, \dots, i_{s'+t'}\} ,\\
A &= \{j_1, \dots, j_{s''}, j_{s''+1}, \dots, j_{s''+t''}\}, 
\end{align*}
where $s' + s''=s+1$ and $t' + t''=t$. Then $C_{V/A}(K)$  for $K \in \text{Emb}(\mathbb{R}^j, \mathbb{R}^n)$  and $B_A(I)$ for $I \in \text{Inj}(\mathbb{R}^j, \mathbb{R}^n)$ are defined as 
\begin{align*}
C_{V/A} (K) &= C_{s', t'} (K)  \\
B_A(I) &= C_{s'', t''}(I)/ \sim.
\end{align*}
Here, $\sim$ is a relation generated by (a) rescalings (b) translations in $I$ direction:
\begin{itemize}
\item [(a)]$(x_{j_1}, \dots, x_{j_{s''}}, x_{j_{s''+1}}, \dots, x_{j_{s'' + t''}}) \sim r (x_{j_1}, \dots, x_{j_{s''}}, x_{j_{s''+1}}, \dots, x_{j_{s'' + t''}}) \quad (r\in~\mathbb{R}_{>0})$
\item [(b)]$(x_{j_1}, \dots, x_{j_{s''}}, x_{j_{s'' +1}}, \dots, x_{j_{s'' + t''}}) \sim v+ (x_{j_1}, \dots, x_{j_{s''}}, x_{j_{s''+1}}, \dots, x_{j_{s'' +t''}}) \quad (v /\!/ I).$
\end{itemize}
\end{notation}

Observe that if $A = V(\Gamma)$, (the restriction of) the form $\omega(\Gamma)$ on $\widetilde{E}_A(\mathbb{R}^j, \mathbb{R}^n)$ is described as the pullback of a form on $B_A$, for which we write $\omega(\Gamma)$, abusing notation.

Let $H = \sum_i \Gamma_i$ be a $g$-loop graph cocycle of order $k$. In the next proposition, we construct a correction term for anomalous faces when  $H_{dR}^{d} (\text{Inj}(\mathbb{R}^j, \mathbb{R}^n)) = H_{dR}^{d} (V_{n,j})=0$ for a certain $d$.

\begin{prop}
\label{correction term1}
Assume that contributions of hidden faces of $H$ are canceled.
For each $\Gamma$ of $H$, set $A=V(\Gamma)$ and consider the bundle $b: B_A \rightarrow \text{Inj}(\mathbb{R}^j, \mathbb{R}^n)$.
Then the fiber integral
\[
b_{\ast} \omega(H) = \sum_i b_{\ast} \omega(\Gamma_i)  \in \Omega^{d}_{dR}(\text{Inj}(\mathbb{R}^j, \mathbb{R}^n)), \quad d=k(n-j-2)+(g-1)(j-1)+(j+1)
\]
is closed. Moreover,  when $b_{\ast} \omega(H)$ is exact, we can construct a correction term of the anomalous face by using $\mu \in \Omega^{d-1}_{dR}(\text{Inj}(\mathbb{R}^j, \mathbb{R}^n))$ which satisfies $d\mu = (-1)^{r+1} b_{\ast}\omega(H)$ ($r = d-j$):

\[
c(H) = \pi_{\ast}D^{\ast}\mu.
\]
\end{prop}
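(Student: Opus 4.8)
The plan is to prove the two assertions---closedness of $b_*\omega(H)$, and the correction property of $c(H)=\pi_*D^*\mu$---by nested applications of Stokes' theorem for fiber integrals, recycling the principal- and hidden-face cancellations already established.

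First I would prove that $b_*\omega(H)$ is closed. Each factor $\phi_e^*\omega_{S^{n-1}}$, $\phi_e^*\omega_{S^{j-1}}$ of the propagator is closed, so $d\omega(H)=0$, and Stokes along $b\colon B_A\to\text{Inj}(\mathbb{R}^j,\mathbb{R}^n)$ reduces the claim to $b^\partial_*\omega(H)=0$, the fiber integral over $\partial B_A$. The codimension-one boundary of the normalized configuration space $B_A$ is indexed by proper subsets $A'\subsetneq A=V(\Gamma)$ collapsing further; there is no total-collapse stratum, since $B_A$ is already the quotient by rescaling and by $I$-direction translation. Every such stratum is thus a principal face ($\#A'=2$) or a hidden face ($\#A'\geq3$, $\Gamma_{A'}\subsetneq\Gamma$), now living fiberwise over $\text{Inj}$. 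Summed over the graphs of $H$, the principal contributions assemble into the fiberwise integral of $\omega(\delta H)$ via the principal-face formula $\int_{\widetilde C_e}\omega(\Gamma)=-\int_{C_{V(\Gamma/e)}}\omega(\delta_e\Gamma)$, hence vanish because $\delta H=0$; the hidden contributions vanish by the fiberwise involution and rescaling arguments of Proposition \ref{basic case} together with the symmetry argument for $1$-loop subgraphs in the $(n,j)=(\text{odd},\text{odd})$ case, which are scale-invariant and apply verbatim over $\text{Inj}$. A degree count, using that the fiber of $B_A$ has dimension $sj+tn-j-1$ (with $s=\#B(\Gamma)$, $t=\#W(\Gamma)$), confirms $b_*\omega(H)\in\Omega^d_{dR}(\text{Inj})$ with $d$ as stated.

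Next, since $\text{Inj}(\mathbb{R}^j,\mathbb{R}^n)$ deformation retracts onto the Stiefel manifold $V_{n,j}$, the hypothesis $H^d_{dR}(V_{n,j})=0$ forces the closed form $b_*\omega(H)$ to be exact, so a primitive $\mu$ with $d\mu=(-1)^{r+1}b_*\omega(H)$ exists. To verify that $c(H)=\pi_*D^*\mu$ cancels the anomalous face I would first record that, because $\omega(\Gamma)|_{\widetilde E_A}$ is pulled back from $B_A$ and $\widetilde E_A=E_{V/A}\times_{\text{Inj}}B_A$, the surviving anomalous term of $dI(H)$ equals $(-1)^r\,\pi_*D^*b_*\omega(H)$. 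Then Stokes along $\pi\colon E_{V/A}\to\text{Emb}(\mathbb{R}^j,\mathbb{R}^n)$ gives $dc(H)=\pi_*D^*d\mu+(\text{boundary})=(-1)^{r+1}\pi_*D^*b_*\omega(H)+(\text{boundary})$, whose leading term is exactly the negative of the anomalous contribution, so that $I(H)+c(H)$ loses its anomalous boundary. The remaining term $\pi^\partial_*D^*\mu$ is supported on the face at infinity of the single-point fiber $C_{V/A}=C_{1,0}\cong\mathbb{R}^j$; there $K$ is standard and $D$ is the constant map $\iota$, so $D^*\mu$ is the pullback of a positive-degree form by a constant map and vanishes.

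The step I expect to be the main obstacle is the closedness of $b_*\omega(H)$: one must check, stratum by stratum and with correct signs under the inward-normal convention, that each codimension-one face of the quotient space $B_A$ genuinely reduces to a principal or hidden face already cancelled---in particular that no residual anomalous-type stratum survives inside $B_A$ and that strata where a subset separates contribute nothing. The secondary difficulty is consistent sign bookkeeping across the three nested fiber integrations, which is precisely what fixes the factor $(-1)^{r+1}$ in the defining equation for $\mu$.
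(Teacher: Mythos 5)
Your argument follows the paper's proof essentially verbatim: closedness of $b_{\ast}\omega(H)$ is reduced to the absence of an anomalous stratum in $\partial B_A$ together with the already-established principal- and hidden-face cancellations, and the correction property is the same short Stokes computation pairing $dI(H)$ against $dc(H)$. You in fact supply two details the paper leaves implicit --- the stratum-by-stratum description of $\partial B_A$ and the vanishing of $\pi^{\partial}_{\ast}D^{\ast}\mu$ on the face at infinity of the fiber $C_{V/A}\cong\mathbb{R}^j$, where $D$ is constant --- so nothing further is needed.
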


\begin{proof}
Without loss of generality, assume $H = \Gamma_1+\Gamma_2$.  The first assertion follows because the integrals ${b_1}_{\ast}\omega(\Gamma_1)$ and  ${b_2}_{\ast}\omega(\Gamma_1)$ do not have anomalous faces, while contributions of hidden and principal faces of the integrals are canceled. 

Next, we show that $I(H) + c(H)$ is closed. 
Using Stokes' theorem (see the paragraph after Definition \ref{configurationspaceintegras}), we have
\begin{align*}
(-1)^r dI(H) &= \int_{\widetilde{C}_{V(\Gamma_1)}} \omega(\Gamma_1) + \int_{\widetilde{C}_{V(\Gamma_2)} }\omega(\Gamma_2)\\
&= \pi_{\ast} D^{\ast} {b_1}_{\ast} \omega(\Gamma_1) + \pi_{\ast} D^{\ast} {b_2}_{\ast} \omega(\Gamma_2)
\end{align*}
On the other hand, 
\begin{align*}
(-1) ^r dc(H) &= (-1)^r d\pi_{\ast}D^{\ast}\mu \\
& = (-1)^r \pi_{\ast}D^{\ast}d\mu +  \pi^{\partial}_{\ast}D^{\ast}\mu\\
& = -\pi_{\ast}D^{\ast}{b_1}_{\ast}\omega(\Gamma_1) - \pi_{\ast}D^{\ast}{b_2}_{\ast}\omega(\Gamma_2)
\end{align*}
So we have $d(I(H) + c(H)) = 0$.
\end{proof}

Next, we consider the case $H_{dR}^{d} (\text{Inj}(\mathbb{R}^j, \mathbb{R}^n)) =0$ is not satisfied. In this case, we construct a correction term of 
\[
r^{\ast} I(H)\in \Omega^{k(n-j-2) + (g-1)(j-1) } \overline{\text{Emb}}(\mathbb{R}^j, \mathbb{R}^n),
\]
where $r: \overline{\text{Emb}}(\mathbb{R}^j, \mathbb{R}^n) \rightarrow \text{Emb}(\mathbb{R}^j, \mathbb{R}^n)$ is the natural projection defined in Definition \ref{embeddings modulo immersions}.
Define
\[
\overline{D}: [0,1] \times C_1(\mathbb{R}^j) \times \overline{\text{Emb}}(\mathbb{R}^j, \mathbb{R}^n) \longrightarrow \text{Inj}(\mathbb{R}^j, \mathbb{R}^n)
\]
by $\overline{D}(t, x, \overline{K}) = D_x(\overline{K}_t)$, where $D$ is the differential.  Note that $\overline{D}(1) = D \circ (id \times r)$. Let 
\[
p_{23}: [0,1] \times C_1(\mathbb{R}^j) \times \overline{\text{Emb}}(\mathbb{R}^j, \mathbb{R}^n) \rightarrow C_1(\mathbb{R}^j) \times \overline{\text{Emb}}(\mathbb{R}^j, \mathbb{R}^n)
\] 
be the projection to the second and the third factors. 
\begin{definition}
\label{definition of the correction term}
We define the correction term for $\Gamma$ by 
\[
\overline{c}(\Gamma) = (-1)^r(-1)^{d+1} \pi_{\ast} {p_{23}}_{\ast}  \overline{D}^{\ast} b_{\ast}\omega(\Gamma).
\]
For a graph cocycle $H = \sum_i \Gamma_i$, set $\overline{c}(H) = \sum_i \overline{c}(\Gamma_i).$
\end{definition}

\begin{prop}
Let $H = \sum_i \Gamma_i$ be a graph cocycle such that contributions of hidden faces are canceled. Then $\overline{c}(H)$ is a correction term of $r^{\ast}I(H)$. 
\end{prop}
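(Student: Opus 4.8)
The plan is to show that $r^{\ast}I(H) + \overline{c}(H)$ is closed by a single application of the fiber-integration Stokes formula to the homotopy $\overline{D}$, and then to match the resulting boundary contributions (at $t=0$, at $t=1$, and at the fiberwise boundary of $C_1(\mathbb{R}^j)$) against the anomalous-face term coming from $r^{\ast}I(H)$. Write $q = \pi \circ p_{23}$ for the total projection down to $\overline{\text{Emb}}(\mathbb{R}^j, \mathbb{R}^n)$, so that $\overline{c}(\Gamma) = (-1)^r(-1)^{d+1} q_{\ast} \overline{D}^{\ast} b_{\ast}\omega(\Gamma)$ is fiber integration along $[0,1]\times C_1(\mathbb{R}^j)$. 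Since $\omega(\Gamma)$ is a wedge of pullbacks of closed volume forms, the first step is to apply Stokes for this fiber integration,
\[
d\, q_{\ast} \overline{D}^{\ast} b_{\ast}\omega(\Gamma) = \pm\, q_{\ast} \overline{D}^{\ast} d\big(b_{\ast}\omega(\Gamma)\big) \pm\, q^{\partial}_{\ast} \overline{D}^{\ast} b_{\ast}\omega(\Gamma),
\]
where the second term collects the contributions of $\{t=0\}$, $\{t=1\}$ and the boundary at infinity of $C_1(\mathbb{R}^j) = \mathbb{R}^j$.

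Summing over $i$ and analyzing each term: First, $\sum_i d\big(b_{\ast}\omega(\Gamma_i)\big) = d\, b_{\ast}\omega(H) = 0$ by the first assertion of Proposition \ref{correction term1} (this is exactly where the hypothesis that hidden faces are canceled, together with $H$ being a cocycle, enters), so the interior term drops out. Second, at $t=0$ we have $\overline{D}(0,x,\overline{K}) = D_x(\overline{K}_0) = D_x(\iota) = \iota$, a constant map to $\text{Inj}(\mathbb{R}^j,\mathbb{R}^n)$; since $b_{\ast}\omega(\Gamma)$ has positive degree $d$, its pullback by a constant map vanishes, killing the $t=0$ contribution. Third, at the boundary at infinity of $\mathbb{R}^j$ the family $\overline{K}_t$ is standard, so $D_x(\overline{K}_t)=\iota$ is independent of $x$ there; hence $\overline{D}^{\ast} b_{\ast}\omega(\Gamma)$ is pulled back from $[0,1]\times\overline{\text{Emb}}(\mathbb{R}^j,\mathbb{R}^n)$ outside a compact disk in the $x$-direction and carries no top fiber form, so this face contributes nothing (the same mechanism that makes infinite faces vanish).

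What remains is the $t=1$ face. There $\overline{D}(1) = D\circ(\text{id}\times r)$, so the restriction equals $(\text{id}\times r)^{\ast} D^{\ast} b_{\ast}\omega(\Gamma)$, and by naturality (base change) of fiber integration $\pi_{\ast}(\text{id}\times r)^{\ast} = r^{\ast}\pi_{\ast}$ we obtain $r^{\ast}\,\pi_{\ast} D^{\ast} b_{\ast}\omega(\Gamma) = r^{\ast}\int_{\widetilde{C}_{V(\Gamma)}}\omega(\Gamma)$, the anomalous-face contribution pulled back to $\overline{\text{Emb}}(\mathbb{R}^j,\mathbb{R}^n)$. On the other hand, the Stokes computation after Definition \ref{configurationspaceintegras}, together with the vanishing or cancellation of infinite, principal and hidden faces, gives $(-1)^r dI(H) = \sum_i \int_{\widetilde{C}_{V(\Gamma_i)}}\omega(\Gamma_i) = \sum_i \pi_{\ast} D^{\ast} b_{\ast}\omega(\Gamma_i)$, hence $(-1)^r d\big(r^{\ast}I(H)\big) = r^{\ast}\sum_i \pi_{\ast} D^{\ast} b_{\ast}\omega(\Gamma_i)$. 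The constants $(-1)^r(-1)^{d+1}$ built into Definition \ref{definition of the correction term} are precisely what is needed so that $d\,\overline{c}(H)$ equals $-d\big(r^{\ast}I(H)\big)$, giving $d\big(r^{\ast}I(H)+\overline{c}(H)\big)=0$.

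The main obstacle I expect is the sign bookkeeping: carefully tracking the orientation conventions for the two successive fiber integrations $p_{23\,\ast}$ and $\pi_{\ast}$, the sign in the fiber Stokes formula over $[0,1]\times C_1(\mathbb{R}^j)$, and the inward-normal boundary convention, so as to confirm that the prefactor $(-1)^r(-1)^{d+1}$ indeed produces a cancellation rather than a doubling. A secondary technical point is justifying rigorously that the boundary-at-infinity face of the $C_1(\mathbb{R}^j)$-integration contributes nothing and that all the fiber integrals converge; both follow from the standard-at-infinity condition on long embeddings, but should be stated explicitly.
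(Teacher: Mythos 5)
Your proposal is correct and follows essentially the same route as the paper's proof: Stokes' formula applied to the fiber integration defining $\overline{c}(H)$, the closedness of $b_{\ast}\omega(H)$ (where the hidden-face hypothesis enters) killing the interior term, the vanishing of the $t=0$ face because $\overline{D}(0)$ is the constant map to $\iota$, and the base-change identity $\pi_{\ast}(\mathrm{id}\times r)^{\ast} = r^{\ast}\pi_{\ast}$ identifying the $t=1$ face with the anomalous-face contribution of $r^{\ast}I(H)$. Your version is somewhat more explicit than the paper's (which performs the two fiber integrations $p_{23\,\ast}$ and $\pi_{\ast}$ separately and leaves the $t=0$ and infinity faces implicit), but the argument is the same.
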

\begin{proof}
By using the closedness of $b_{\ast}\omega(H) = {b_1}_{\ast}\omega(\Gamma_1) + {b_2}_{\ast}\omega(\Gamma_2)$, we have
\begin{align*}
(-1)^r d \overline{c}(H) &=  (-1)^{d+1} \pi_{\ast} d ({p_{23}}_{\ast} \overline{D}^{\ast} {b_1}_{\ast}\omega(\Gamma_1)) + (-1)^{d+1}\pi_{\ast} d ({p_{23}}_{\ast} \overline{D}^{\ast} {b_2}_{\ast}\omega(\Gamma_2))\\
& = - \pi_{\ast} (id\times r)^{\ast} D^{\ast} {b_1}_{\ast} \omega(\Gamma_1) - \pi_{\ast} (id\times r)^{\ast} D^{\ast} {b_2}_{\ast} \omega(\Gamma_2)\\
& = - r^{\ast} \pi_{\ast} {D}^{\ast} {b_1}_{\ast}\omega(\Gamma_1) - r^{\ast} \pi_{\ast} {D}^{\ast} {b_2}_{\ast}\omega(\Gamma_2)\\
& = - (-1)^r  d (r^{\ast}I (H)).
\end{align*}

\begin{center}
\begin{tikzpicture}[auto]
\node (c) at (0, 2) {$\text{Emb}(\mathbb{R}^j, \mathbb{R}^n)$}; \node (d) at (5, 2) {$C_1(\mathbb{R}^j) \times \text{Emb}(\mathbb{R}^j, \mathbb{R}^n)$}; 
\node (a) at (0, 0) {$\overline{\text{Emb}}(\mathbb{R}^j, \mathbb{R}^n)$}; \node (b) at (5, 0) {$C_1(\mathbb{R}^j) \times \overline{\text{Emb}}(\mathbb{R}^j, \mathbb{R}^n)$}; 
\node (f) at (10, 0) {$\text{Inj}(\mathbb{R}^j, \mathbb{R}^n)$}; \node (g) at (12, 0) {$B_{V(\Gamma)}$}; 
\node (e) at (5, -2) {$[0,1] \times C_1(\mathbb{R}^j) \times \overline{\text{Emb}}(\mathbb{R}^j, \mathbb{R}^n)$};

\draw[->] (d)  to node {$\small \pi$} (c);
\draw[->] (b) to node {$\small \pi$} (a);
\draw [->] (a) to  node {$r$} (c); 
\draw [->] (b)  to node {$id \times r$} (d);
\draw [->] (e) to node {$p_{23}$} (b); 
\draw [->] (d) to node {$D$} (f);
\draw [->] (e) to node {$\overline{D}$} (f);
\draw [->] (g) to node {$b$} (f); 

\end{tikzpicture}
\end{center}
\end{proof}


\section{Review of Sakai and Watanabe's construction of the wheel-like cycles}
\label{Review of Sakai and Watanabe's construction of the wheel-like cycles}
Here, we review Sakai and Watanabe's construction \cite[Section 4] {SW} of the wheel-like cycles
\[
c_k : (S^{n-j-2})^k \longrightarrow \text{Emb}(\mathbb{R}^j, \mathbb{R}^n),\quad \mathbf{v} \mapsto \varphi^\mathbf{v}_k
\]
and 
\[
\widetilde{c_k} : (S^{n-j-2})^k \longrightarrow \overline{\text{Emb}}(\mathbb{R}^j, \mathbb{R}^n) 
\]
for $k \geq 2$. The long embeddings $\varphi^\mathbf{v}_k$ are constructed by ``perturbating" (\cite[4.1.1] {SW}) the original long embedding $\varphi_k$. The long embedding $\varphi_k$ is constructed from a specific oriented immersed 2-disk in $\mathbb{R}^{3}$, called the wheel-like ribbon presentation. This embedding $\varphi_k$ has specific $k$ parts called ``crossings'', where we can perform the perturbation to the direction $v_i \in S^{n-j-2}\, (i=1, \dots, k)$. This perturbation is analogous to the perturbation for constructing $k(n-3)$-cycles in $\text{Emb}(\mathbb{R}^1, \mathbb{R}^n)$. (See \cite{CCL}.) These crossings can be described as images of several standard disks and annuli in $\mathbb{R}^j$ (see \cite[4.2.2] {SW}), which look like ``planetary systems''. See Figure 4.6 of \cite{SW}.

Below, we often consider $\mathbb{R}^n$ as the product $\mathbb{R}^3 \times \mathbb{R}^{n-j-2} \times \mathbb{R}^{j-1}$. In this section, the standard embedding $\iota: \mathbb{R}^j \subset \mathbb{R}^n$ is considered as $x_1 \mapsto x_1$ and $x_i \mapsto x_{n-j+i}$ for $2\leq i \leq j$.

\subsection{The wheel-like ribbon presentations}
The original embedding $\varphi_k$ is constructed from the wheel-like ribbon presentation $P \subset \mathbb{R}^3$ (defined below) by (i) thickening $P$ and (ii) taking the boundary. The embedding $\varphi_k$ is realized in $\mathbb{R}^3 \times \mathbf{0} \times \mathbb{R}^{j-1}$.

\begin{definition}\cite[Definition 4.1] {SW}
The wheel-like ribbon presentation $P = \mathcal{D} \cup \mathcal{B}$ of order $k$ is a based oriented, immersed 2-disk in $\mathbb{R}^{3}$ such that
\begin{itemize}
\item $\mathcal{D} = D_0 \cup D_1 \cup \dots \cup D_k$ are disjoint $(k+1)$ disks,
\item $\mathcal{B}=  B_1 \cup B_2 \cup \dots \cup B_k$ are disjoint $k$ bands ($B_i \approx I \times I$),
\item $B_{i+1}$ connects $D_0$ with $D_i$ so that $B_{i+1} \cap D_0 = \{0\} \times I$ and $B_{i+1} \cap D_i = \{1\} \times I$ ($B_{1}$ is considered as $B_{k+1}$),
\item $B_{i}$ intersects transversally with the interior of $D_i$,
\item the base point $\ast$ is on the boundary of $D_0$ (but not on the boundaries of~$B_i$).
\end{itemize}
\end{definition}

Near the intersection of $B_i$ and $D_i$, they look, using a local coordinate, like
\begin{align*}
B&= \{(x_1, x_2, x_3) \in \mathbb{R}^3\,|\,|x_1| \leq \frac{1}{2}, |x_2| < 3, x_3=0\},\\
D&= \{(x_1, x_2, x_3) \in \mathbb{R}^3\,|\,|x_1|^2 + |x_3|^2 \leq 1, x_2=0\}.
\end{align*}
See the left picture in  Figure \ref{Perturbation of a ribbon presentation illust}.

\begin{definition}\cite[Definition 4.2] {SW}
We define the ribbon $(j+1)$-disk $V_P$ by 
\[
V_P = \left(\mathcal{B} \times [-\frac{1}{4}, \frac{1}{4}]^{j-1} \right) \bigcup  \left(\mathcal{D} \times [-\frac{1}{2}, \frac{1}{2}]^{j-1}\right) \subset \mathbb{R}^3 \times \mathbf{0} \times  \mathbb{R}^{j-1}.
\]
Note that the thicknesses of bands and that of disks are different.
\end{definition}

\begin{definition}
After smoothing corners of $V_P$, the embedding $\varphi_k$ is defined by the connected sum $\partial V_P \# \iota(\mathbb{R}^j)$ at the base point.
\end{definition}

\subsection{Perturbation of crossings}
\label{Perturbation of crossings}
Here, we define the perturbation $\varphi^{\mathbf{v}}_k$ of the original embedding $\varphi_k$. First, we define the perturbation $P_{\mathbf{v}}$ of the ribbon presentation $P$. The embedding $\varphi^{\mathbf{v}}_k$ is constructed from $P_{\mathbf{v}}$ in the same way as $\varphi_k$. Recall that the ribbon presentation $P$ is constructed in $\mathbb{R}^3 \times \mathbf{0} \times \mathbf{0}$, and the thickened presentation $V_p$ is constructed in $\mathbb{R}^3 \times \mathbf{0} \times \mathbb{R}^{j-1}$. The perturbation is performed using the coordinates $(x_4, \dots, x_{n-j+1})$ of $\mathbb{R}^{n-j-2}$ and the coordinate $x_3$ of $\mathbb{R}^3$. 

\begin{notation}
We consider the $(n-j-2)$-dimensional sphere $S^{n-j-2}$ as
\[
S =S^{n-j-2}=\left\{(x_3, \dots, x_{n-j+1}) \in \mathbb{R}^{n-j-1}\, | \, (x_3-1)^2+ x_4^2+\dots+x^2_{n-j+1}=1\right\}.
\]
Note that if $n-j-2=0$, $S$ is the set of two points $S^0 =\{x_3=0, x_3=2\}$.
\end{notation}

\begin{notation}
The band obtained by perturbing $B$ to $v\in S$ direction is written as $B(v)$. (See the right picture in Figure \ref{Perturbation of a ribbon presentation illust}):
\[
B(v) = 
\begin{cases}
\left\{(x_1, x_2, \gamma(x_2)v) \in \mathbb{R}^2 \times \mathbb{R}^{n-j-1} \, | \, |x_1| \leq 1/2, |x_2|<3 \right\}.\\
\end{cases}
\]
Here $\gamma(y)$ is a test function defined as 
\[
\gamma(y)=
\begin{cases}
 \text{exp}(-y^2/\sqrt{9-y^2}) & (|y| \leq 3) \\
 0 & (|y| \geq 3)
\end{cases}.
\]
\end{notation}


\begin{figure}[htpb]
\labellist
\small \hair 10pt
\pinlabel $B$ at 350 450
\pinlabel $D$ at 350 230
\pinlabel $B(v)$ at 900 230
\pinlabel $D$ at 750 270

\pinlabel $x_2$ at 780 500
\pinlabel $x_3$ at 1000 270

\pinlabel $x_3$ at 100 370
\pinlabel $x_2$ at 140 500
\pinlabel $x_1$ at 200 400

\endlabellist

\centering
\includegraphics[width = 12cm]{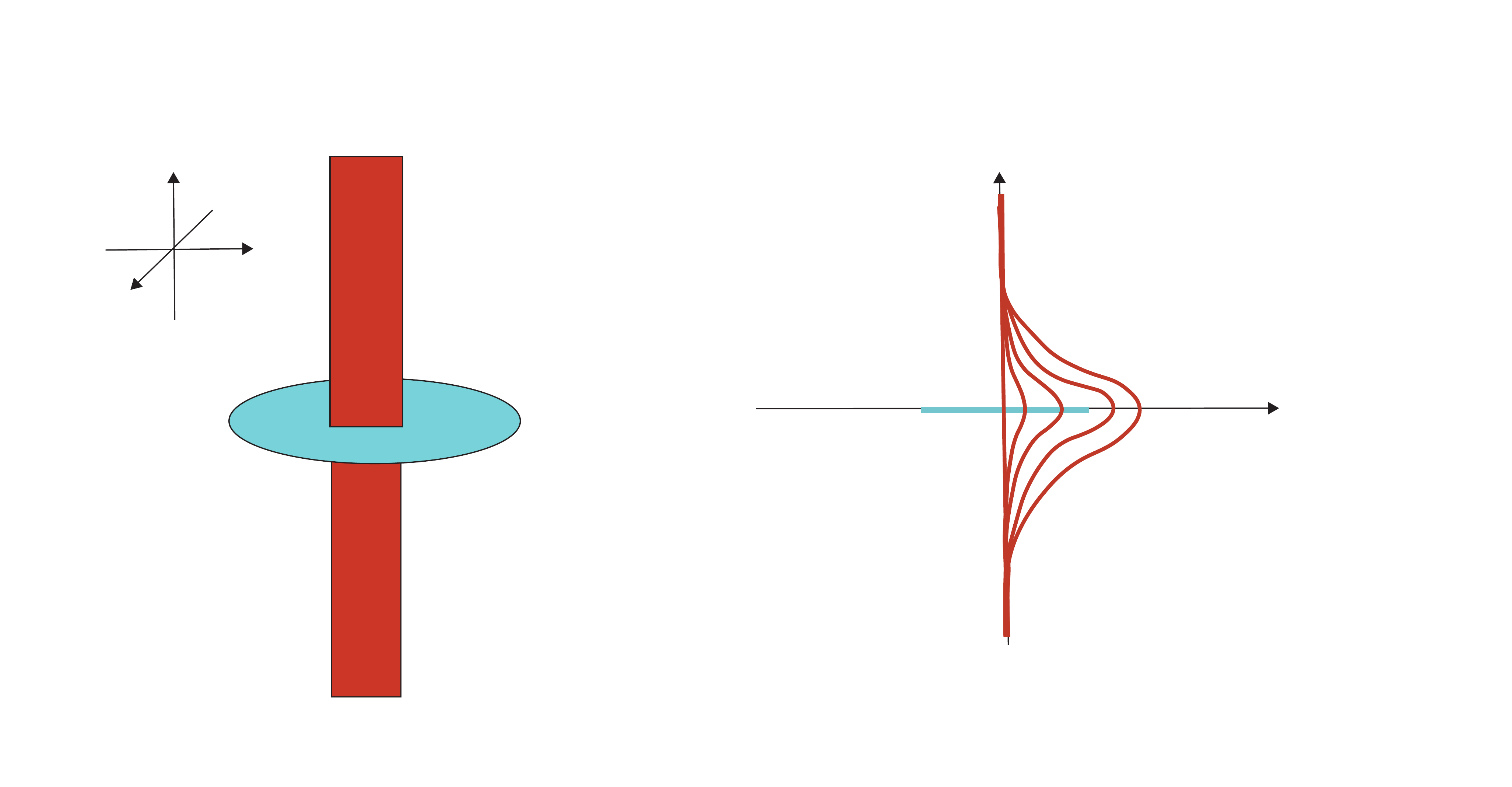}
\caption{Perturbation of a crossing}
\label{Perturbation of a ribbon presentation illust}
\centering
\end{figure}

\begin{definition}
We define $B_i(v_i) (v_i \in S) $ as the band in $\mathbb{R}^3 \times \mathbb{R}^{n-j-2}$ constructed from $B_i$ by replacing $B \times \mathbf{0}$ by $B(v_i)$.
More precisely, let $U_i \subset  \mathbb{R}^3 \times \mathbb{R}^{n-j-2}$ be a small neighborhood of the crossing $B_{i} \cap D_i$. and let 
\[
\xi_i : U_i \rightarrow [-3,3] ^{n-j+1}
\]
be a local coordinate that maps $B_i$ to $B \times \mathbf{0} $ and $D_i$ to $D \times \mathbf{0}$. Then $B_i(v_i)$ coincides with 
\[
(B_i\setminus U_i) \bigcup \xi_i^{-1}(B(v)).
\]
\end{definition}

\begin{notation}
Let $\mathbf{v} = (v_1, \dots, v_k) \in  {(S^{n-j-2})}^k$. We define $\mathcal{B}_{\mathbf{v}}$ by 
\[
\mathcal{B}_{\mathbf{v}} =  B_1 (v_1) \cup B_2 (v_2) \cup \dots \cup B_k (v_k).
\]
The perturbation $P_{\mathbf{v}} \subset \mathbb{R}^{n-j+1}$ of the wheel-like ribbon presentation $P$ is defined by $P_{\mathbf{v}} = \mathcal{D} \cup \mathcal{B}_{\mathbf{v}} $. Set
\[
V_{P_{\mathbf{v}}} = \left(\mathcal{B}_{\mathbf{v}} \times [-\frac{1}{4}, \frac{1}{4}]^{j-1} \right) \bigcup  \left(\mathcal{D} \times [-\frac{1}{2}, \frac{1}{2}]^{j-1}\right) \subset \mathbb{R}^n. 
\]
\end{notation}

\begin{definition}
As a submanifold in $\mathbb{R}^n$, the perturbation $\varphi^\mathbf{v}_k$ of $\varphi_k$ is defined by the connected sum $\partial V_{P_{\mathbf{v}}} \# \iota(\mathbb{R}^j)$. The parametrization of this submanifold is described in subsection \ref{The path to the trivial immersion}.
\end{definition}

\begin{definition}
We define the wheel-like cycle $c_k$ by
\[
c_k : (S^{n-j-2})^k \longrightarrow \text{Emb}(\mathbb{R}^j, \mathbb{R}^n),\quad \mathbf{v} \mapsto \varphi^\mathbf{v}_k.
\]
\end{definition}

\begin{notation}\cite [Definition 4.6] {SW}
Let $\hat{U}_i = U_i \times [-\frac{3}{4}, \frac{3}{4}]^{j-1}$. Inside the neighborhood $\hat{U}_i$, there are two components $\hat{D}_i$ and $\hat{B}_i(v_i)$ of $\varphi^{\mathbf{v}}_k$. $\hat{D}_i$ is homeomorphic to the punctured $j$-disk, while $\hat{B}_i(v_i)$ is homeomorhic to the annulus $I \times S^{j-1}$. $\hat{D}_i$ and $\hat{B}_i(v_i)$ correspond to $D_i$ and $B_i(v_i)$ respectively.
The pair $(\hat{U}_i , \hat{D}_i \cup \hat{B}_i(v_i))$ is called the $i$-th crossing of $\varphi^\mathbf{v}_k$.
\end{notation}

\subsection{The path to the trivial immersion}
\label{The path to the trivial immersion}
In the previous subsections, we have given a continuous  $(S^{n-j-2})^{\times k}$-family of embedded $\mathbb{R}^j$ in $\mathbb{R}^n$. To give a parametrization to this submanifold $\varphi^\mathbf{v}_k$ for each $\mathbf{v}$, we consider sequential resolutions of crossings. (See \cite[Remark 4,3]{SW}.) These resolutions are possible in the space of immersions. More precisely, we consider a path from $\varphi^\mathbf{v}_k$ to the trivially immersed $\mathbb{R}^j$, associated with the following moves.
\begin{itemize}
\item [($m1$)] Pull the disk $D_i$ of each crossing to the $x_1$-direction so that $D_i$ and $B_i(v_i)$ do not intersect.
\item [($m2$)] Pull back  $D_i$ and $B_i(v_i)$ to the based disk $D_0$.
\end{itemize}

Using this path, we can equip each point of the submanifold $\varphi^\mathbf{v}_k$ with a coordinate. This parametrization is continuous with respect to $\mathbf{v}$. Moreover, this path gives a lift $\widetilde{\varphi^\mathbf{v}_k}$ of $\varphi^\mathbf{v}_k$ to $\overline{\text{Emb}}(\mathbb{R}^j, \mathbb{R}^n)$. We define the cycle $\widetilde{c_k} : (S^{n-j-2})^k \longrightarrow \overline{\text{Emb}}(\mathbb{R}^j, \mathbb{R}^n)$ by $\mathbf{v} \mapsto \widetilde{\varphi^\mathbf{v}_k}$.

\subsection{Crossings as embeddings from standard disks and annuli}
In \cite{SW}, Sakai and Watanabe deformed $\varphi^\mathbf{v}_k$ by isotopy to give embeddings $\varphi^\mathbf{v, \varepsilon}_k$. In this deformation, we ``shrink''  $\hat{D}_i$ and $\hat{B}_i(v_i)$ of $\varphi^\mathbf{v}_k$. See \cite[4.2.1]{SW} for the precise definition. This shrinking aims to make pairing arguments with cocycles easier. We write $\hat{D}_i(\varepsilon)$ (resp. $\hat{B}_i(v_i, \varepsilon))$) for the image of $\hat{D}_i $ (resp. $\hat{B}_i(v_i))$ by this deformation.

The $i$-th crossing $(\hat{U}_i, \hat{D}_i(\varepsilon) \cup  \hat{B}_i(v_i, \varepsilon))$ of $\varphi^{\mathbf{v}, \varepsilon}_k$ is described  as an image of a standard disk and annulus $\mathbf{D}_i(\varepsilon) \cup \mathbf{B}_{i}(\varepsilon)$: 
\begin{align*}
\mathbf{D}_i(\varepsilon) &= \{ (x_1, \dots, x_j) \in \mathbb{R}^j \, |\, (x_1-p_i)^2 + x_2^2 + \dots + x_j^2 \leq (\varepsilon^2)^2\}\quad (1 \leq i \leq k),\\
\mathbf{B}_i(\varepsilon) &= \{ (x_1, \dots, x_j) \in \mathbb{R}^j \, |\, (3\varepsilon/4)^2 \leq (x_1-p_{i-1})^2 + x_2^2 + \dots + x_j^2 \leq \varepsilon^2\} \quad (1 \leq i \leq k+1),
\end{align*}
where $p_i = i/k\, (1\leq i \leq k-1)$ and $p_0 = p_k = 0$.  Recall that $\mathbf{B}_1 (\epsilon) = \mathbf{B}_{k+1} (\epsilon) $. 

The subset $\cup_i\mathbf{D}_i \bigcup \cup_i \mathbf{B}_i(\varepsilon)$ looks like ``planetary systems'' on $\mathbb{R}^j$. (See Figure \ref{figure of embeddings from standard disks and annuli}.)

\begin{figure}[htpb]
\begin{center}
\tikzset{every picture/.style={line width=1pt, xscale=0.5pt, yscale = 0.5pt}}

\begin{tikzpicture}

\draw (2,-1) rectangle (26, 7);

\draw (6,3) circle(2) [line width = 5pt] [color= {rgb, 255:red, 0; green, 0; blue, 0 }, fill opacity =0.5];
\draw [fill = black] (6,3) circle(0.3);
\draw (6,4) node  {$\mathbf{D}_3(\varepsilon)$};
\draw (6,0) node  {$\mathbf{B}_1(\varepsilon) = \mathbf{B}_4(\varepsilon)$};

\draw (14,3) circle(2) [line width = 5pt] [color= {rgb, 255:red, 0; green, 0; blue, 0}, fill opacity =0.5];
\draw [fill = black] (14,3) circle(0.3);
\draw (14,4) node  {$\mathbf{D}_1(\varepsilon)$};
\draw (14,0) node  {$\mathbf{B}_2(\varepsilon)$};

\draw (22,3) circle(2) [line width = 5pt] [color= {rgb, 255:red, 0; green, 0; blue, 0}, fill opacity =0.5];
\draw [fill = black] (22,3) circle(0.3);
\draw (22,4) node  {$\mathbf{D}_2(\varepsilon)$};
\draw (22,0) node  {$ \mathbf{B}_3(\varepsilon)$};

\end{tikzpicture}
\end{center}
\caption{$\varphi^\mathbf{v, \varepsilon}_3$ as embeddings from standard disks and annuli}
\label{figure of embeddings from standard disks and annuli}

\end{figure}
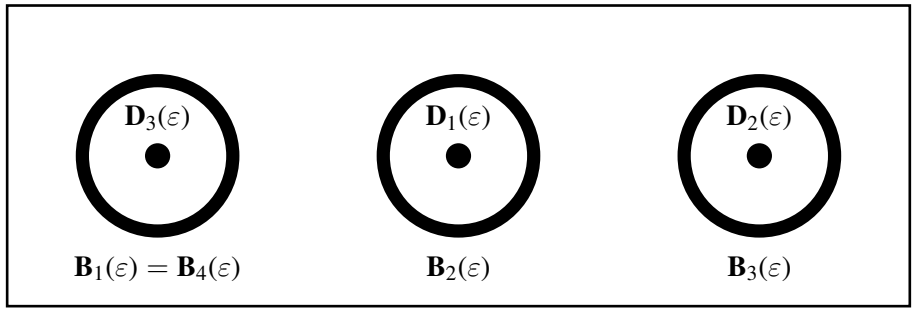

\begin{rem}
In Section \ref{Construction of the cycles}, the cycle $c_k: \mathbf{v} \mapsto \varphi^{\mathbf{v}, \varepsilon}_k$ is described as the chord diagram on $k$ directed lines as in Figure \ref{chorddiagramforSW}.
\end{rem}

\begin{figure}[htpb]

\begin{center}
\begin{tikzpicture}[x=1pt,y=1pt,yscale=0.2,xscale=0.3,baseline=20pt, line width = 1pt]
\draw [color={rgb, 255:red, 0; green, ; blue, 255 }, line width=1pt]  [-Stealth] (0,-100)--(0, 400);
\draw [color={rgb, 255:red, 0; green, 0; blue, 255 }, line width=1pt]  [-Stealth] (200,-100)--(200, 400);
\draw [color={rgb, 255:red, 0; green, 0; blue, 255 }, line width=1pt]  [-Stealth] (400,-100)--(400, 400);

\draw [color={rgb, 255:red, 0; green, 0; blue,0 }, line width=1pt]   [Stealth-] (0,300)--(200,0);
\draw (60, 280) node {$(1)$};
\draw [color={rgb, 255:red, 0; green, 0; blue,0 }, line width=1pt]   [Stealth-] (200,300)--(400,0) ;
\draw (260, 280) node {$(2)$};
\draw [color={rgb, 255:red, 0; green, 0; blue,0 }, line width=1pt]   [Stealth-] (400,300)--(0,0) ;
\draw (360, 230) node {$(3)$};

\draw  [fill={rgb, 255:red, 0; green, 0; blue, 0 }  ,fill opacity=1 ] (0,0) circle (5) ;
\node at (-20,0) {$O$};
\draw [fill={rgb, 255:red, 0; green, 0; blue, 0 }  ,fill opacity=1 ] (0,300) circle (5);

\draw  [fill={rgb, 255:red, 0; green, 0; blue, 0 }  ,fill opacity=1 ] (200,0) circle (5) ;
\node at (220,0) {$O$};
\draw [fill={rgb, 255:red, 0; green, 0; blue, 0 }  ,fill opacity=1 ] (200,300) circle (5);

\draw  [fill={rgb, 255:red, 0; green, 0; blue, 0 }  ,fill opacity=1 ] (400,0) circle (5) ;
\node at (420,0) {$O$};
\draw [fill={rgb, 255:red, 0; green, 0; blue, 0 }  ,fill opacity=1 ] (400,300) circle (5);

\end{tikzpicture}
\end{center}
\caption{the cycle $c_3$ as a chord diagram on three directed lines}
\label{chorddiagramforSW}
\end{figure}
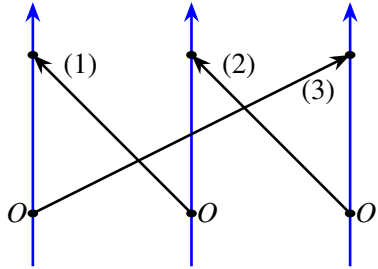

\section{Construction of the cycles}
\label{Construction of the cycles}
In this section, we construct a cycle of the space of long embeddings from what we call ``a set of planetary systems'' on $\mathbb{R}^j$.
Furthermore, we will see that such a set of planetary systems is described by a chord diagram on directed lines. The sets of planetary systems corresponding to Sakai and Watanabe's wheel-like cycles are the ones in which each planetary system has exactly one planetary orbit. (See Figure \ref{figure of embeddings from standard disks and annuli}.) In terms of ribbon presentations, we use ribbon presentations with more than one node, while wheel-like ribbon presentations have exactly one node, the based disk $D_0$.

\subsection{Planetary  systems and chord diagrams on directed lines}
Using the terminology ``planetary system" for configuration spaces appears in Sinha's work \cite{Sin}.
\begin{definition}[A set of planetary systems on $\mathbb{R}^j$]
A set of planetary systems $\mathcal{S}$ on $\mathbb{R}^j$ of order $k$ consists of the following.
\item [(1)] Points $a_i$ at $(i,0,\dots,0) \in \mathbb{R}^j \quad (i=1, \dots, s)$.
\item [(2)] Spheres $S(L^l_i, a_i)$\quad$(l=1, \dots, t_i)$ with radius $L^j_i$ centered at $a_i$ $(i = 1,\dots, s)$.

Here, $s$ and $t_i\ (i=1, \dots, s)$ are non-negative integers such that the total number of points and spheres is $2k$. \footnote{$s$ and $t_i$ here are not related to $s$ and $t$ of $C_{s,t}$. } We also assume $ L^l_i$ is sufficiently smaller than $1$, and assume $L^l_i$ is sufficiently smaller than $L^{l^{\prime}}_i$ if $l<l^{\prime}$. For example, set $\delta = \frac{1}{10^N}$ and $L^l_i = \delta^{(t_i +1- l)}.$

Below we call points of (1)  fixed stars. We call spheres of (2)  planetary orbits. Each fixed star forms one planetary system which consists of $t_i$ planetary orbits.

\end{definition}

\begin{definition}
An ordered pairing $\{p_i\}_{i =1, \dots, k}$ for $\mathcal{S}$ is a pairing among the set of fixed stars and planetary orbits.
Fixed stars must be paired with planetary orbits. Planetary orbits may be paired with other planetary orbits (orbit-orbit pairing). The order of two elements of each pair is taken so that the fixed star (if it exists) is the first. 
\end{definition}

A set of planetary systems and its ordered pairing can be described by a chord diagram on directed lines.
\begin{definition}[Chord diagram on directed lines]
A chord diagram on $s$ directed lines of order $k$ consists of the following data.
\begin{itemize}
\item  Integers $t_i \geq 0$ ($i = 1, \dots, s$) such that
\[
\sum_{i=1}^s (t_i +1) = 2k.
\]
\item  An ordered pairing $\{p_i\}_{i=1,\dots,k}$ among $2k$ points $V = \{(i,l)\in \mathbb{Z}^2 | 1\leq i \leq s, 0\leq l \leq t_i\}$.
\end{itemize}
Such a chord diagram is drawn as \chorddiagramb{}. The ordered pairing must be taken so that it satisfies
\begin{itemize}
\item At least one of the two points of each pair is not on the $x$-axis (that is, its second coordinate is not zero).
\item Any point $v = (i, 0)$ on the $x$-axis must be the first of some ordered pair.
\end{itemize}
\end{definition}

A set of planetary systems and its ordered pairing determines a chord diagram on directed lines in an expected way. Namely, points on the $x$-axis are fixed stars. Other points are planetary orbits. Conversely, a chord diagram on directed lines determines a set of planetary systems (up to rescalings) and its ordered pairing.

Let $C$ be a chord diagram on directed lines. Remember that the set of chords (pairings) $E(C)$ is ordered, and each chord is oriented.
\begin{definition}[Induced ordering of $V(C)$]
\label{Induced ordering}
We order the set of vertices $V(C)$ in a canonical way using the above ordering and orientations. That is, the initial vertex of the $i$th chord is labeled by $(2i-1)$. The end vertex of the $i$th chord is labeled by $2i$. 
\end{definition}
Using the ordering of $V(C)$ , let the image of $\tau: \{1, \dots, g-1\} \rightarrow \{1, \dots, 2k\} =~V(C)$ be the first vertices of orbit-orbit pairing; $g = k-s$. We assume that $\tau$ preserves the orders.

\subsection{Construction of the cycles}
\label{Construction of the cycles sub}

Once we fix a set of planetary systems and its ordered pairing, namely a chord diagram on directed lines, we can construct a ``generalized ribbon cycle''. These generalized cycles have more parameters than the wheel-like cycles in Section \ref{Review of Sakai and Watanabe's construction of the wheel-like cycles}.

\begin{definition}[Generalized ribbon cycles]
Let $\mathcal{S}$ be a set of planetary systems of order $k$. 
Let $\{p_i\}_{i =1, \dots, k}$ be an ordered pairing.
Let $(g-1)$ be the number of orbit-orbit pairings. 
From this data, we construct a $k(n-j-2)+(g-1)(j-1)$ cycle $\psi = \psi (\mathcal{S}, \{p_i\}_{i =1, \dots, k})$ in the following. We call this cycle the generalized ribbon cycle associated with $(\mathcal{S}, \{p_i\}_{i =1, \dots, k})$.
For simplicity, we only construct for the following case:
\begin{itemize}
\item There is only one fixed star $a_1$ (and hence one planetary system).
\item There are three planetary orbits $S(L_l, a_1) (l=1,2,3)$.
\item The pairing is taken as $(a_1, S(L_2))$, $(S(L_1), S(L_3))$.
\end{itemize}
The corresponding chord diagram is $C_1 = \chorddiagrama{}$. The cycles corresponding to other $(\mathcal{S}, \{p_i\}_{i =1, \dots, k})$ are constructed similarly.

Below we construct a cycle in $\text{Emb} (\mathbb{R}^j, \mathbb{R}^n)$
\[
\phi: S^{j-1} \times S^{n-j-2} \times S^{n-j-2}  \rightarrow \text{Emb} (\mathbb{R}^j, \mathbb{R}^n).
\]
Let $S(L)$ and $D(L)$ denote the $(j-1)$-sphere and $j$-ball respectively, of radius $L$ centered at O in $\mathbb{R}^{j}$. And let $L_0, L_1, L_2, L_3 \in \mathbb{R}$ satisfy
\[
0< L_0 < L_1< L_2 < L_3 < L_4=1, 
\]
where $L_i$ is sufficiently smaller than $L_{i+1}$. (For example, set $L_i = \delta^{4-i}$). For each $\theta\in S^{j-1}$, we write $D_{\theta}(\varepsilon)$ for the ball of radius $\varepsilon$ centered at $\theta \in S(L_1)$. We assume $\varepsilon$ is sufficiently smaller than $L_1$. (See Figure \ref{Preimage}.) Step~1 and Step~2 below are to construct a cycle $\phi_{\theta} : S^{n-j-2} \times S^{n-j-2}  \rightarrow \text{Emb}(\mathbb{R}^j, \mathbb{R}^n) $ for each parameter $\theta \in S^{j-1}$.  Let $0< h_4<h_3<h_2<h_1<h_0$ be coordinates of the $x_{j+1}$ axis.

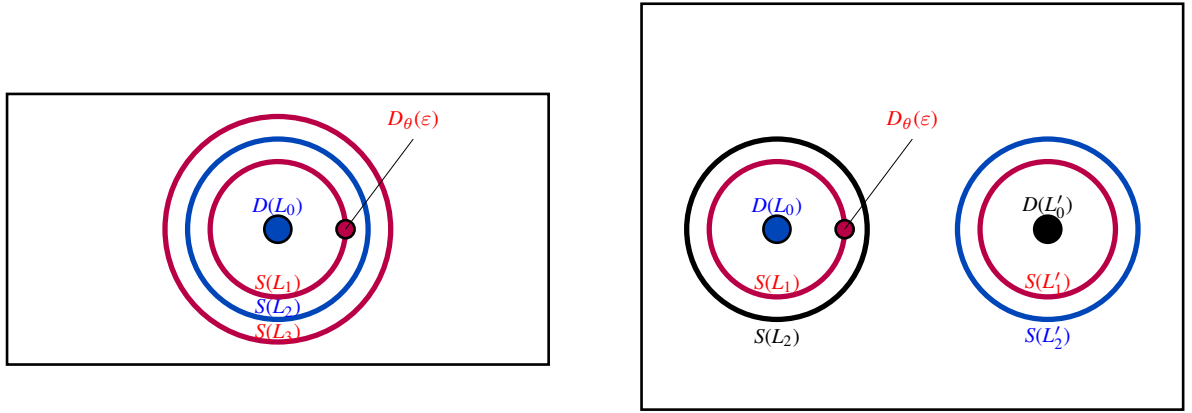
\begin{figure}
\begin{center}
\tikzset{every picture/.style={line width=1pt, xscale=0.6pt, yscale = 0.6pt}}  
\begin{tikzpicture}

\draw (0,0) rectangle (12, 6);

\draw (6,3) circle(2.5) [line width = 2pt] [color = {rgb, 255:red, 185; green, 0; blue, 70 }, fill opacity =1.0 ];
\node  [red] at (6, 0.7) {\scriptsize $S(L_3)$};

\draw (6,3) circle(2) [line width = 2pt] [color = {rgb, 255:red, 0; green, 70; blue, 185 }, fill opacity =1.0];
\node [blue] at (6, 1.25) {\scriptsize $S(L_2)$};

\draw (6,3) circle(1.5) [line width = 2pt] [color = {rgb, 255:red, 185; green, 0; blue, 70 }, fill opacity =1.0];
\node [red] at (6, 1.8) {\scriptsize $S(L_1)$};

\draw (7.5, 3) circle (0.2) [fill={rgb, 255:red, 185; green, 0; blue, 70}, fill opacity =1.0] ; 
\draw  [line width=0.3pt] (7.5,3)--(9,5) node  [anchor= south] [red]  {\scriptsize $D_{\theta}({\varepsilon})$};

\draw (6,3) circle(0.3) [fill={rgb, 255:red, 0; green, 70; blue, 185}, fill opacity =1.0] ; 
\node [blue] at (6, 3.5) {\scriptsize $D(L_0)$};

\begin{scope}[xshift =400]

\draw (0,-1) rectangle (12, 8);

\draw (9,3) circle(2) [line width = 2pt] [color = {rgb, 255:red, 0; green, 70; blue, 185 }, fill opacity =1.0];
\node  [blue] at (9, 0.6) {\scriptsize $S(L^{\prime}_2)$};

\draw (9,3) circle(1.5) [line width = 2pt] [color = {rgb, 255:red, 185; green, 0; blue, 70 }, fill opacity =1.0];
\node  [red] at (9, 1.8) {\scriptsize $S(L^{\prime}_1)$};

\draw (3,3) circle(2) [line width = 2pt] [color = {rgb, 255:red, 0; green, 0; blue, 0 }, fill opacity =1.0];
\node  at (3, 0.6) {\scriptsize $S(L_2)$};

\draw (3,3) circle(1.5) [line width = 2pt] [color = {rgb, 255:red, 185; green, 0; blue, 70 }, fill opacity =1.0];
\node [red] at (3, 1.8) {\scriptsize $S(L_1)$};

\draw (4.5, 3) circle (0.2) [fill={rgb, 255:red, 185; green, 0; blue, 70}, fill opacity =1.0] ; 
\draw  [line width=0.3pt] (4.5,3)--(6,5) node  [anchor= south] [red]  {\scriptsize $D_{\theta}({\varepsilon})$};

\draw (3,3) circle(0.3) [fill={rgb, 255:red, 0; green, 70; blue, 185}, fill opacity =1.0] ; 
\node [blue] at (3, 3.5) {\scriptsize $D(L_0)$};

\draw (9,3) circle(0.3) [fill={rgb, 255:red, 0; green, 0; blue, 0}, fill opacity =1.0]  ; 
\node at (9, 3.5) {\scriptsize $D(L^{\prime}_0)$};

\end{scope}

\end{tikzpicture}

\caption{Preimage  on $\mathbb{R}^j$ of $\phi$ (left) and $\psi$ (right)}
\label{Preimage}
\end{center}
\end{figure}

\begin{itemize}
\item[\underline{Step 1} :]
Deform $D(L_3)$ by isotopy in $\mathbb{R}^{j}\times \mathbb{R} \subset \mathbb{R}^{n}$ to the $x_{j+1}$ direction,  so that $\mathbb{R}^j$ is embedded in  $\mathbb{R}^{j+1}$ as 
\[
S(r)\times (h_4, h_0) 
\]
in the range $h_4<  x_{j+1}<h_0$. Here $S(L_3)$, $S(L_2)$ and $S(L_1)$ are mapped to $S(r) \times \{h_3\}$ , $S(r) \times \{h_2\}$ and $S(r) \times \{h_1\}$ respectively. We assume $r$ is sufficiently small compared to  $\text{min}\{|h_{i}-h_{i+1}|\}_i$.

\item[\underline{Step 2} :]
For $L = L_2, L_3$, let $S(L) \times I$ be the tubular neighborhood of $S(L)$ in $\mathbb{R}^{j}$ with width $2\varepsilon$. 
Construct a ribbon crossing of  $S(L_3) \times I $ and $D_{\theta}(\varepsilon)$ near  $S(r)\times h_3$, and a ribbon crossing of $S(L_2)\times I$ and $D(L_0)$ near $S(r)\times h_2$. Here, a ribbon crossing is the crossing given by the perturbation of a crossing of a ribbon presentation as in subsection \ref{Perturbation of crossings}. See Figure \ref{Image}.
Then we obtain a cycle
\[
\phi_{\theta}: S^{n-j-2} \times S^{n-j-2} \rightarrow \text{Emb}(\mathbb{R}^j, \mathbb{R}^n).
\]
We describe a ribbon presentation corresponding the cycle $\phi_{\theta}$ in subsection~\ref{Ribbon presentations with more than one node}. By choosing a ribbon presentation suitably, we can construct this cycle in the unknot component. 
\item[\underline{Step 3} :]
Move $D_\theta(\varepsilon)$ on $S(L_1)$. In $\mathbb{R}^{j+1}$, we change the behavior only near $h_1$. At the height $h_1$, we turn the branch tube around the stem tube. See Figure~\ref{Image}. Then we get the desired cycle $\phi$. The parametrization of $\phi$ is given in subsection \ref{Lifting the cycle}.
\end{itemize}

\begin{figure}[htpb]

\labellist
\small \hair 10pt
\pinlabel $h_0$ at 65 510
\pinlabel $h_1$ at 65 450
\pinlabel $h_2$ at 65 370
\pinlabel $h_3$ at 65 260
\pinlabel $h_4$ at 65 200

\pinlabel {Behavior near $h_1$} at 825 590

\pinlabel $R$ at 700 240
\pinlabel $r$ at 720 400
\pinlabel $\theta$ at 745 360
\pinlabel $\theta$ at 745 200
\pinlabel $q_1(\theta)$ at 845 320
\pinlabel $q_2(\theta)$ at 945 150

\pinlabel ${S^{j-1}(r) \times \{h_1\}}$ at 1200 360
\pinlabel ${S^{j-1}(R) \times \{h_1 - \delta\}}$ at 1200 200

\pinlabel \color{purple}{$T$} at 525 190
\pinlabel \color{teal}{$U$} at 745 530

\endlabellist

\begin{center}
\includegraphics[width = 12cm]{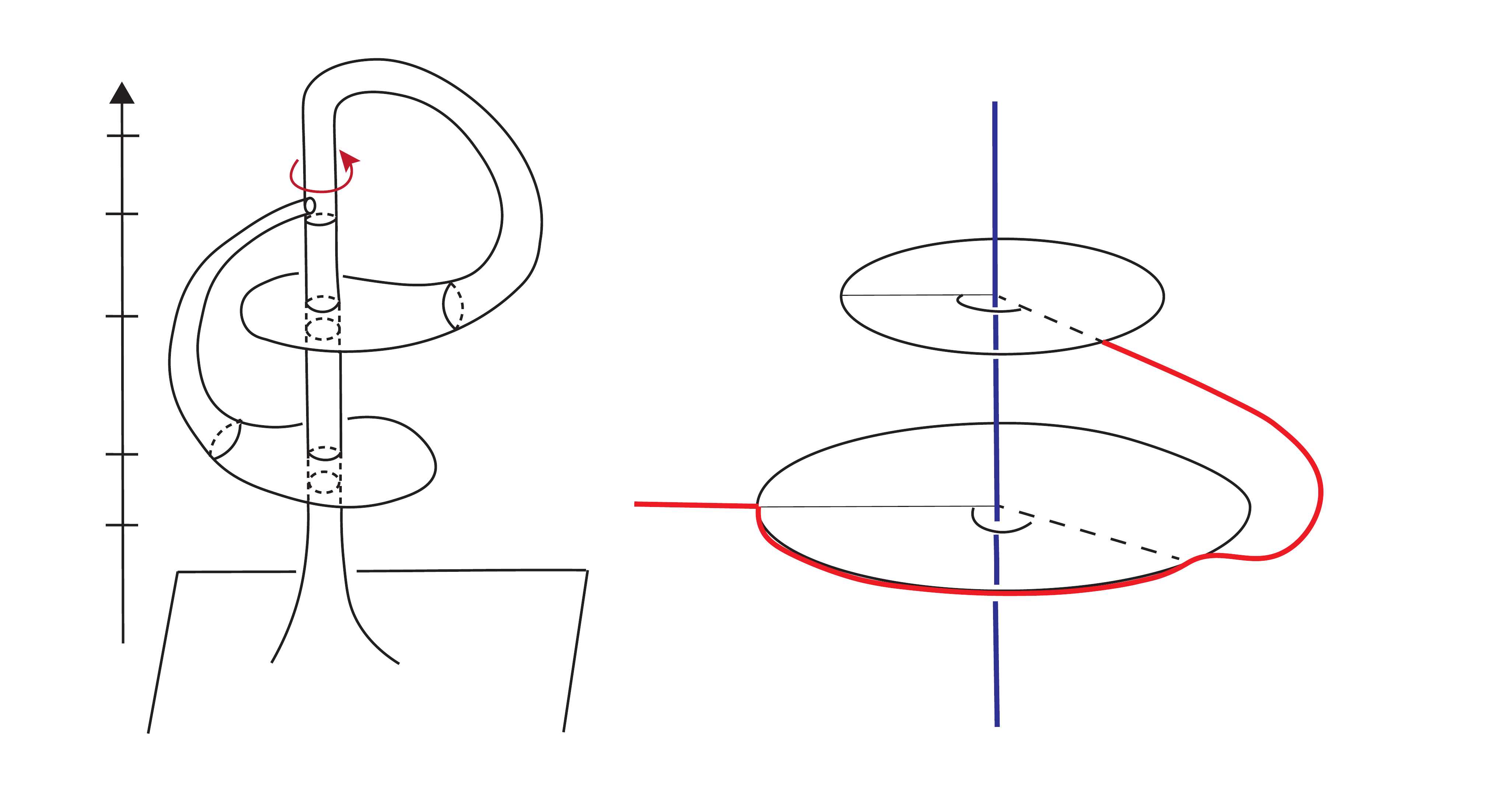}
\caption{Image on $\mathbb{R}^{j+1}$}
\label{Image}
\end{center}
\end{figure}

We describe Step 3 more precisely. We write $U$ for the stem tube $S^{j-1}(r)\times (h_4, h_0)$. In Figure \ref{Image}, we draw the core $c_U$ of $U$ in blue. For $\theta \in S^{j-1} $, let $\partial D_{\theta} (\varepsilon)$ be mapped to the $\theta$ side of $U$. 
For the initial point $ O= (1, 0,\dots,0) \in S^{j-1}$, we construct, near $h_1$, a branch tube $T = T(O)$ with small radius which starts from $U$ and goes to the $x_1$-direction. We draw the core $c = c(O)$ of $T$ in red.

For $\theta \in S^{j-1}$, we replace the core $c$ with $c(\theta)$ defined as follows. 
\begin{itemize}
\item $c(\theta)$ starts from the point $q_1(\theta)=(r\theta, h_1)\in S^{j-1}(r) \times \{h_1\}$ and goes straight to the point $(R\theta, h_1)\in S^{j-1}(R) \times \{h_1\} (R > r)$.
\item Next $c(\theta)$ runs down to $S^{j-1}(R)\times \{h_1-\delta\}$ preserving $\theta \in S^{j-1}(R)$ and reaches the point $q_2(\theta)=(R \theta, h_1 -\delta)$. 
\item Then $c(\theta)$ connects $q_2(\theta)$ and $q_2(O)$ through the shortest geodesic of $S^{j-1}(R)$. 
\end{itemize}

However, there is one problem. When $\theta$ is the antipode $A \in S^{j-1}$ of the base point $O$, there are infinitely many geodesics from $q_2(A)$ to $q_2(O)$. 
To avoid this, $c(\theta)$ takes a short-cut from $q_2(\theta)$ to $q_2(O)$ inside the ball $D^{j}(R)$, when $\theta$ is in a small neighborhood of $A$. At the same time, when $\theta$ is near $A$, we perturb $c(\theta)$ to  the $x_{j+2}$-direction. 


\begin{figure}[htpb]
\labellist
\small \hair 10pt
\pinlabel ${||\theta|| = 0}$ at 200 580
\pinlabel ${0<||\theta||\leq 0.95 \pi}$ at 570 580
\pinlabel ${0.95 \pi<||\theta||< \pi}$ at 970 580

\pinlabel ${||\theta|| = \pi}$ at 200  280
\pinlabel ${0.95 \pi<||\theta||< \pi}$ at 570 280
\pinlabel ${0 <||\theta|| \leq 0.95\pi}$ at 970 280

\pinlabel $\color{purple}T$  at 100  370
\pinlabel $\color{teal}U$  at 270  340
\pinlabel $S(r)$  at 310  480
\pinlabel $S(R)$  at 310  400

\endlabellist
\centering
\includegraphics[width = 10cm]{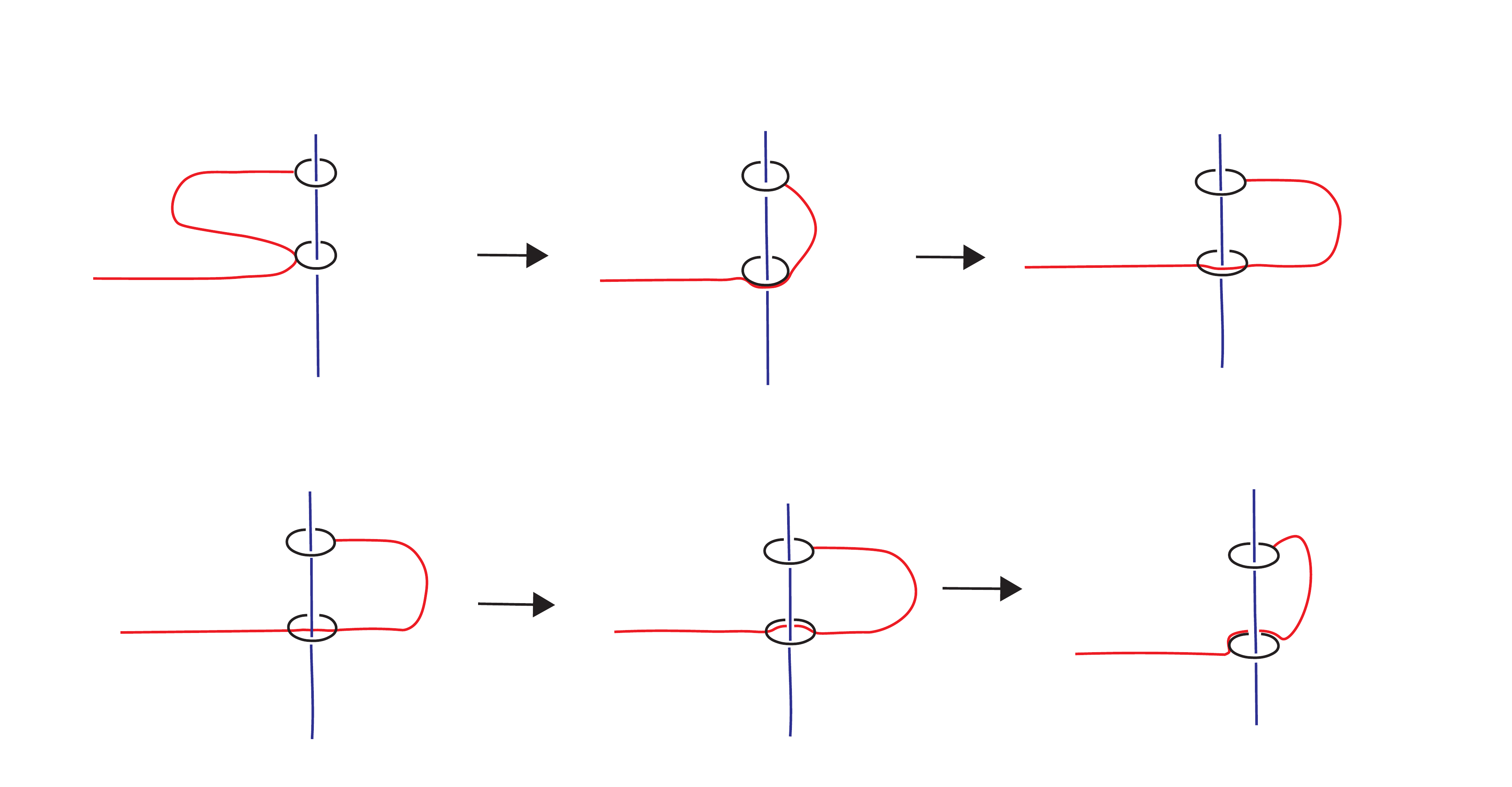}
\caption{The replaced core $c(\theta)$ near $h_1$ $(a = 0.05 \pi)$}
\label{Perturbing}
\centering
\end{figure}

More precisely, let
\[
||\cdot|| : S^{j-1} \rightarrow [0, \pi ] 
\]
be the function that assigns the geodesic distance from the base point $O$. Then in the range $||\theta||> \pi - a$ for a sufficiently small $a$, we assume that the core $c(\theta)$ approaches the core $c_{U}$ of $U$, and assume that they intersect with each other in $\mathbb{R}^{j+1}$ when $||\theta||= \pi$. At this time, we perturb $c(\theta)$ in the positive direction of the $x_{j+2}$ axis. See Figure \ref{Perturbing}.

Then we take a tubular neighborhood $N(\theta) \in \mathbb{R}^{j+1}$, which is perturbed to the $x_{j+2}$ direction when $||\theta||> \pi - a$. The replaced tube $T(\theta)$ is defined as the boundary of $N(\theta)$. 

As a candidate of a dual cycle of $H$, we construct the generalized ribbon cycle in Figure \ref{Cycle psi} from the diagram \chorddiagramb{}. The ribbon presentation corresponding to the cycle $\psi$ is given in subsection 6.3. The cycle $\psi$ is also constructed in the unknot component. In general, cycles have parameters $\theta^{\prime}_i \in S^{j-1}$\ $(i = 1,\dots, g-1)$ and $y_j\in S^{n-j-2}$\ $(j=1, \dots, k)$. \footnote{We change the notation for the parameter moving on $S^{j-1}$, from ``\ $\theta$\ '' to ``\ $\theta^{\prime}$\ ''. In Section \ref{Some lemmas for computing cocycle-cycle pairing}, we use the letter ``\ $\theta$\ '' for a different parameter.}

\begin{figure}[htpb]
\centering
\includegraphics[width = 6cm]{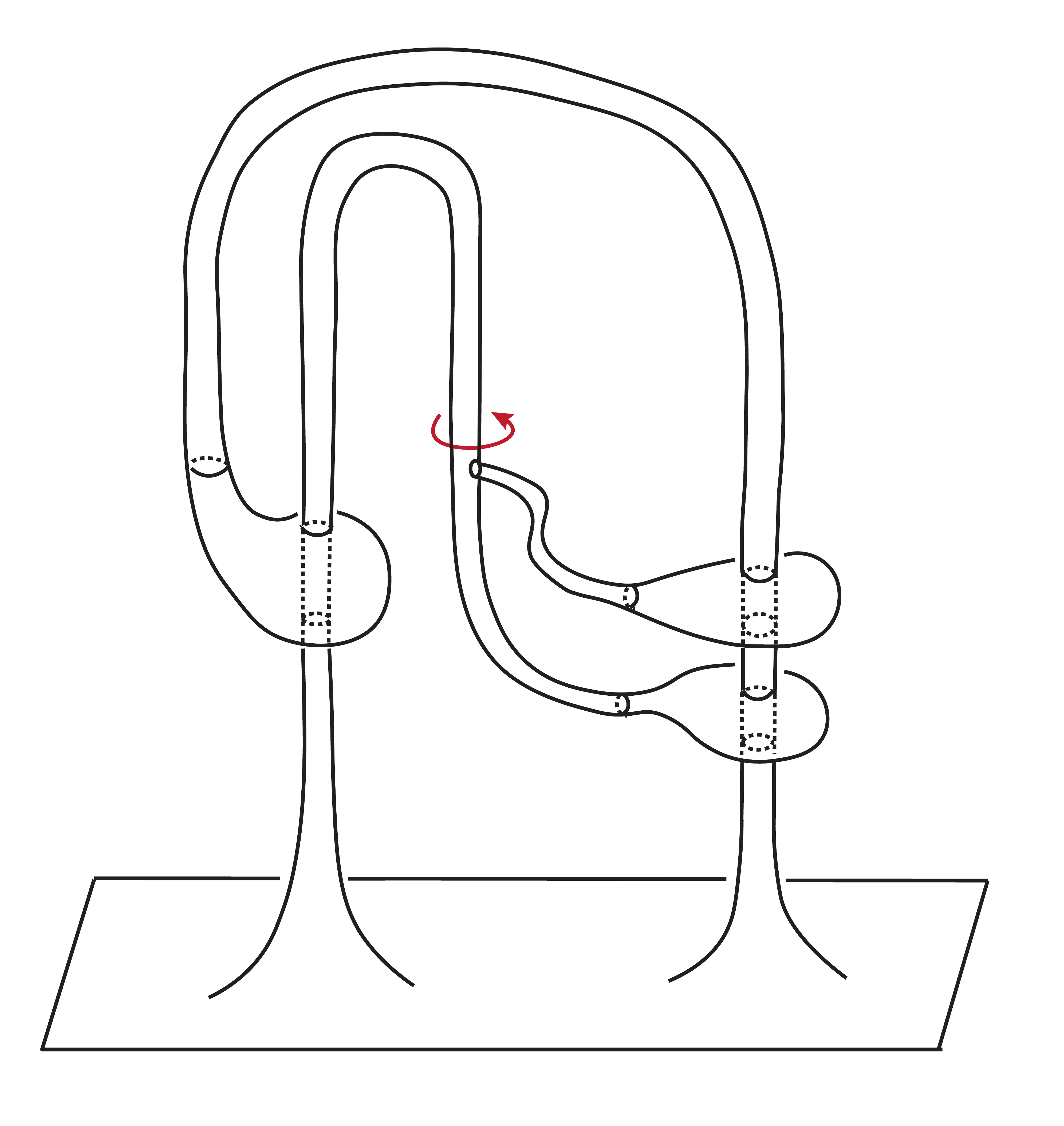}
\caption{Cycle $\psi$}
\label{Cycle psi}
\centering
\end{figure}

\end{definition}

\subsection{Ribbon presentations with more than one node}
\label{Ribbon presentations with more than one node}

\qquad Here, we give ribbon presentations corresponding to the (cycles of) long embeddings in subsection \ref{Construction of the cycles sub}. (We review notation of ribbon presentations at the beginning of this subsection.) These ribbon presentations have a node other than the based disk $D_0$, and this node is connected (by two bands) to two leaves. Using moves of ribbon presentations, we  show that  the cycles of subsection \ref{Construction of the cycles sub} are in the path component of the trivial long embedding. The key point is the way the two leaves intersect with bands.

\begin{definition}
A  ribbon presentation $P = \mathcal{D} \cup \mathcal{B}$ of order $k$ is a based oriented, immersed 2-disk in $\mathbb{R}^{3}$ such that
\begin{itemize}
\item $\mathcal{D} = D_0 \cup D_1 \cup \dots \cup D_k$ are disjoint $(k+1)$ disks,
\item $\mathcal{B}=  B_1 \cup B_2 \cup \dots \cup B_k$ are disjoint $k$ bands ($B_i \approx I \times I$),
\item each band connects two different disks,
\item $B_{i}$ can intersect transversally with the interior of disks,
\item the base point $\ast$ is on the boundary of $D_0$ (but not on the boundaries of $B_i$).
\end{itemize}
\end{definition}

\begin{notation}\cite[Definition 3.1]{HS}
A disk without intersecting bands is called a node.  A disk is called a leaf if it has at least one intersecting band and it is incident to exactly one band. We draw a grey disk for a node except for the based disk $D_0$. We call a ribbon presentation simple if each leaf intersects with exactly one band.
\end{notation}

In \cite{HS}, Habiro and Shima introduced moves of ribbon presentations that do not change the isotopy class of the ribbon $2$-knot represented by the presentation. The following are examples of the moves.
\begin{notation}
We define $S1$ and $S4$ moves as in Figure \ref{S1S4}.

\begin{figure}[htpb]

\labellist
\small \hair 10pt
\pinlabel $S1$ at 1350 1360
\pinlabel $S4$ at 1330 690
\endlabellist

\centering
\includegraphics[width = 10cm]{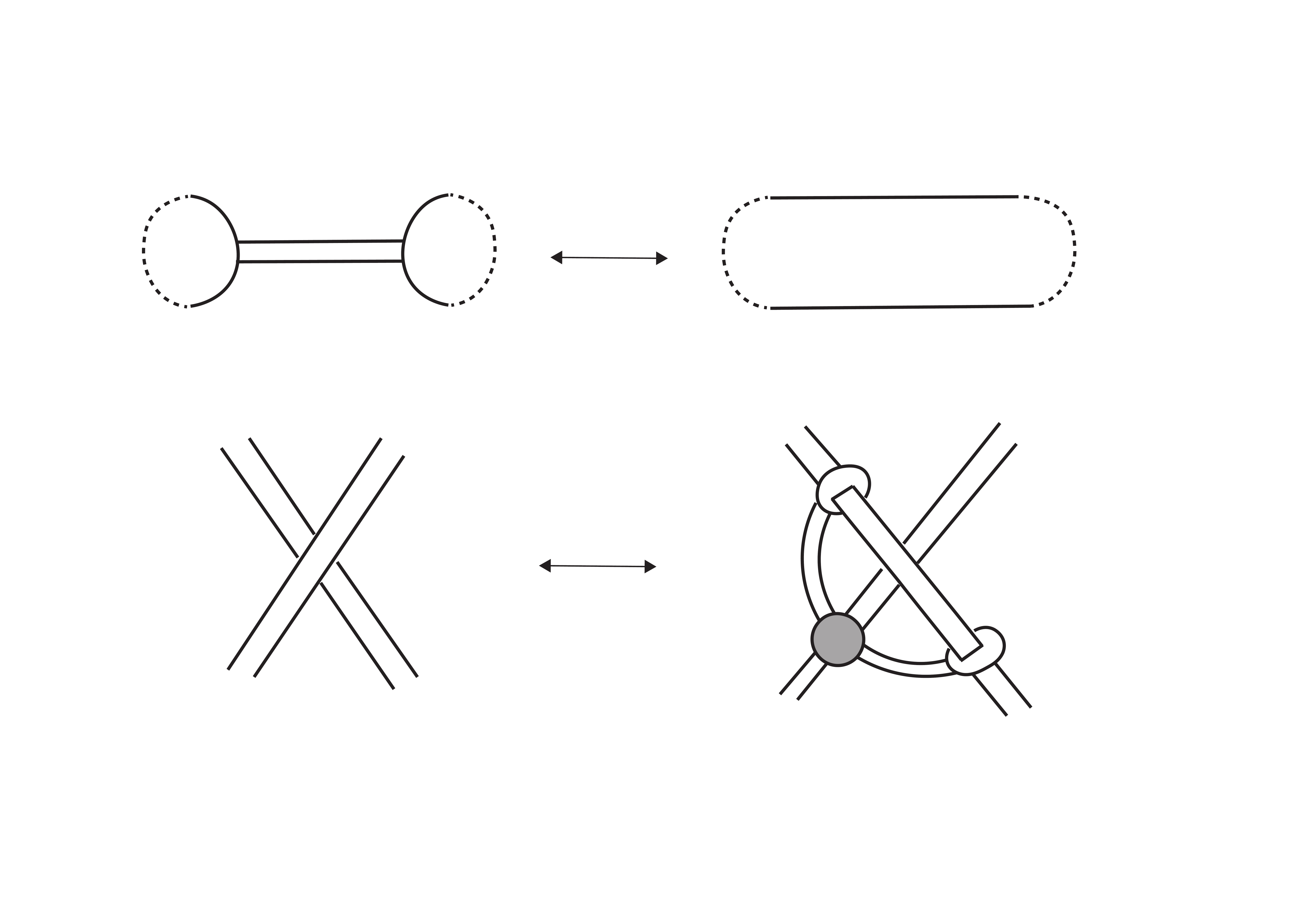}
\caption{$S1$ and $S4$ moves}
\label{S1S4}
\centering
\end{figure}

\end{notation}

To the chord diagrams $C_1 = \chorddiagrama{} $ and $C_2 = \chorddiagramb{} $, we assign the simple ribbon presentations $P_1$ in Figure \ref{Ribbon presentations P1} and $P_2$ in Figure   \ref{Ribbon presentations P2}.  Each directed line corresponds to a sequence of bands and nodes that connects the based disk $D_0$ and a leaf. The origin of each directed line corresponds to this leaf.  Each chord corresponds to a crossing. The initial point of each chord corresponds to the disk (leaf) of the crossing. If this initial point is not an origin, this leaf is connected to a node by a band. The target point of each chord corresponds to a small segment of the band of the crossing. 

\begin{figure}[htpb]

\labellist
\small \hair 10pt
\pinlabel $B_1$ at 1140 1050
\pinlabel $B_2$ at 1530 1550
\pinlabel $B_3$ at 530 1550

\pinlabel $D_0$ at 1650 750
\pinlabel $D_1$ at 850 1550
\pinlabel $D_2$ at 1250 1250
\pinlabel $D_3$ at 900 1950
\endlabellist

\centering
\includegraphics[width = 7cm]{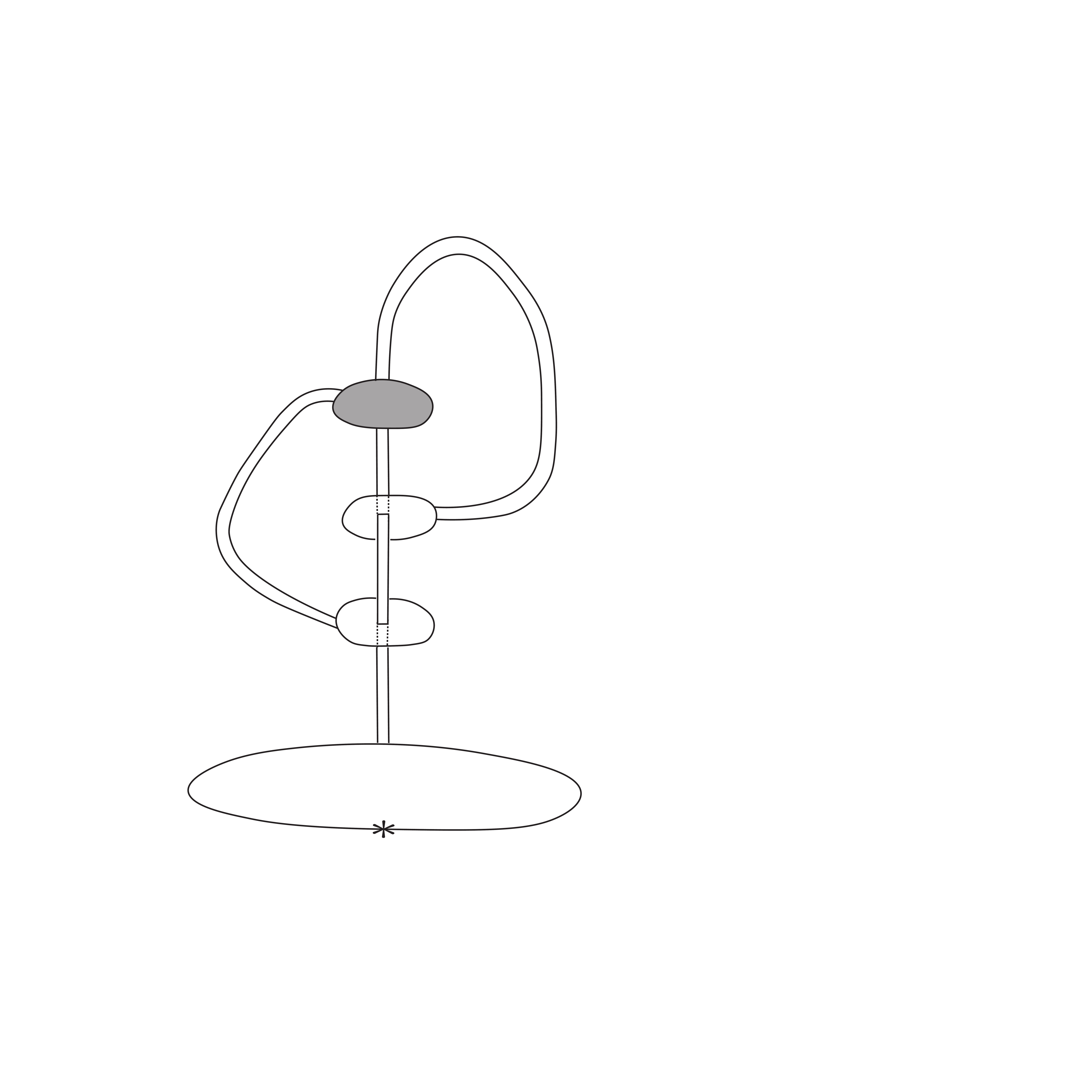}
\caption{Ribbon presentation $P_1$ corresponding to $C_1$}
\label{Ribbon presentations P1}
\centering
\end{figure}

\begin{figure}[htpb]

\labellist
\small \hair 10pt
\pinlabel $B_1$ at 840 1050
\pinlabel $B_2$ at 1500 1050
\pinlabel $B_3$ at 1000 1650
\pinlabel $B_4$ at 1000 2000

\pinlabel $D_0$ at 1650 750
\pinlabel $D_1$ at 1600 1550
\pinlabel $D_2$ at 1600 1800
\pinlabel $D_3$ at 950 1200
\pinlabel $D_4$ at 850 1550
\endlabellist

\centering
\includegraphics[width = 6cm]{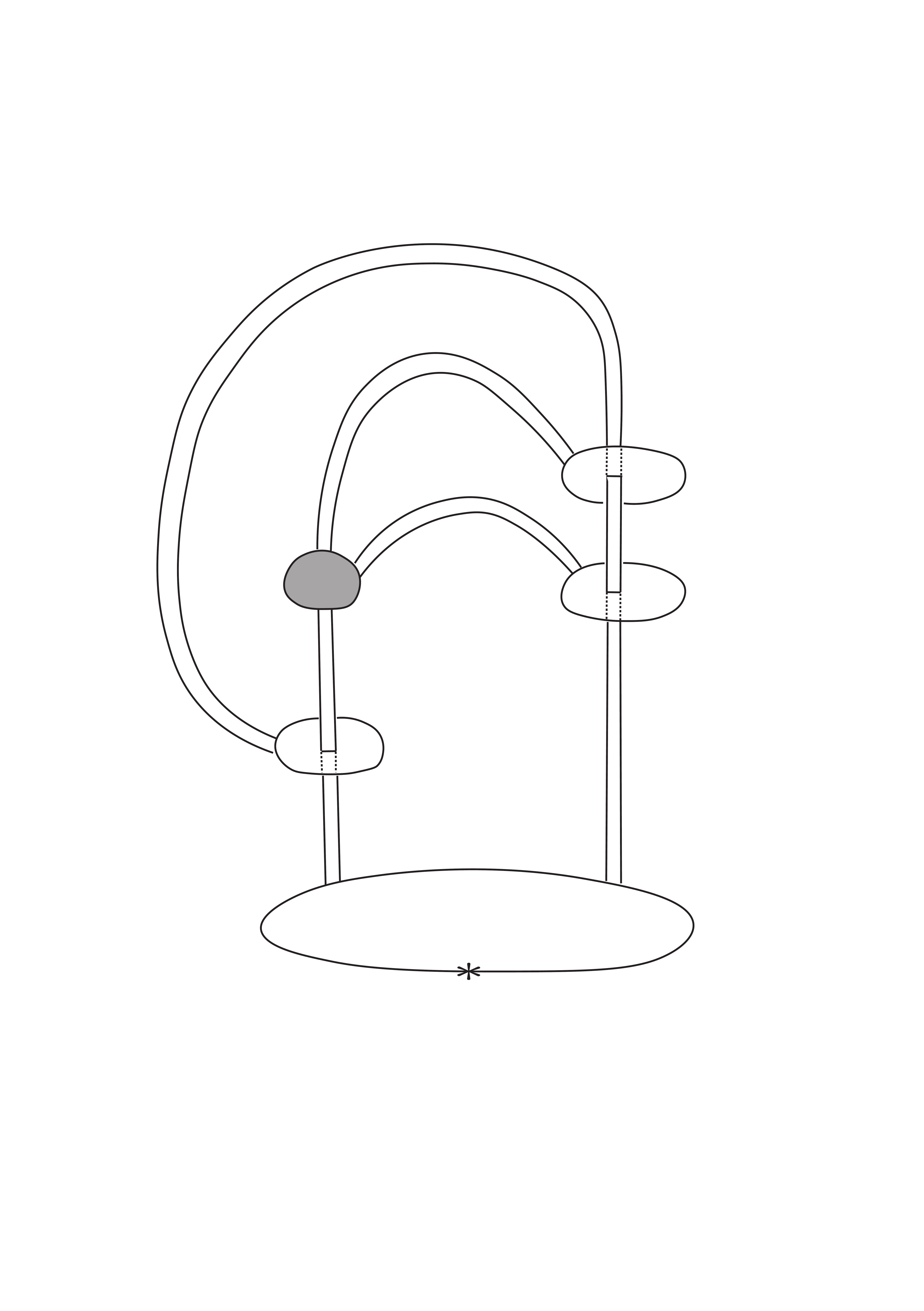}
\caption{Ribbon presentation $P_2$ corresponding to $C_2$}
\label{Ribbon presentations P2}
\centering
\end{figure}

Observe that our chord diagrams $C_1$ and $C_2$ have a subdiagram of the form
\begin{tikzpicture}[x=1pt,y=1pt,yscale=0.2,xscale=0.3,baseline=20pt, line width = 1pt]
\draw [color={rgb, 255:red, 0; green, ; blue, 255 }, line width=1pt]  [-Stealth] (0,-100)--(0, 400);
\draw [color={rgb, 255:red, 0; green, 0; blue, 255 }, line width=1pt]  [-Stealth] (200,-100)--(200, 400);
\draw [color={rgb, 255:red, 0; green, 0; blue,0 }, line width=1pt]   [-Stealth] (0,0)--(200,300) ;
\draw [color={rgb, 255:red, 0; green, 0; blue,0 }, line width=1pt]   [-Stealth] (0,150)--(200,150) ;

\draw  [fill={rgb, 255:red, 0; green, 0; blue, 0 }  ,fill opacity=1 ] (0,0) circle (5) ;
\node at (-20,0) {$O$};
\draw  [fill={rgb, 255:red, 0; green, 0; blue, 0 }  ,fill opacity=1 ] (0,150) circle (5) ;
\draw  [fill={rgb, 255:red, 0; green, 0; blue, 0 }  ,fill opacity=1 ] (200,150) circle (5) ;
\draw [fill={rgb, 255:red, 0; green, 0; blue, 0 }  ,fill opacity=1 ] (200,300) circle (5);
\end{tikzpicture}.
Recall that we have constructed a crossing for each chord. 
There are two ways to construct a crossing: in which orientation the band intersects with the disk. 
In the part 
\begin{tikzpicture}[x=1pt,y=1pt,yscale=0.2,xscale=0.3,baseline=20pt, line width = 1pt]
\draw [color={rgb, 255:red, 0; green, ; blue, 255 }, line width=1pt]  [-Stealth] (0,-100)--(0, 400);
\draw [color={rgb, 255:red, 0; green, 0; blue, 255 }, line width=1pt]  [-Stealth] (200,-100)--(200, 400);
\draw [color={rgb, 255:red, 0; green, 0; blue,0 }, line width=1pt]   [-Stealth] (0,0)--(200,300) ;
\draw [color={rgb, 255:red, 0; green, 0; blue,0 }, line width=1pt]   [-Stealth] (0,150)--(200,150) ;

\draw  [fill={rgb, 255:red, 0; green, 0; blue, 0 }  ,fill opacity=1 ] (0,0) circle (5) ;
\node at (-20,0) {$O$};
\draw  [fill={rgb, 255:red, 0; green, 0; blue, 0 }  ,fill opacity=1 ] (0,150) circle (5) ;
\draw  [fill={rgb, 255:red, 0; green, 0; blue, 0 }  ,fill opacity=1 ] (200,150) circle (5) ;
\draw [fill={rgb, 255:red, 0; green, 0; blue, 0 }  ,fill opacity=1 ] (200,300) circle (5);
\end{tikzpicture},
we choose the left presentation of Figure \ref{Suitable ribbon presentation (left)}. That is, we assume that two crossings have opposite orientations. We can write this crossing information on chord diagrams by assigning a sign to each chord. For $C_1$ and $C_2$, we choose
\begin{tikzpicture}[x=1pt,y=1pt,yscale=0.2,xscale=0.3,baseline=20pt, line width = 1pt]
\draw [color={rgb, 255:red, 0; green, 0; blue, 255 }, line width=1pt]  [-Stealth] (0,-100)--(0, 400);
\draw [color={rgb, 255:red, 0; green, 0; blue,0 }, line width=1pt]   [Stealth-] (0,300)..controls (100,200)..(0,100) ;
\node at (120,200) {$+$};
\draw [color={rgb, 255:red, 0; green, 0; blue,0 }, line width=1pt]   [-Stealth] (0,0)..controls (100,100)..(0,200) ;
\node at (120,100) {$-$};
\draw  [fill={rgb, 255:red, 0; green, 0; blue, 0 }  ,fill opacity=1 ] (0,0) circle (5) ;
\node at (-20,0) {$O$};
\draw  [fill={rgb, 255:red, 0; green, 0; blue, 0 }  ,fill opacity=1 ] (0,100) circle (5) ;
\node at (-20,100) {};
\draw [fill={rgb, 255:red, 0; green, 0; blue, 0 }  ,fill opacity=1 ] (0,200) circle (5) ;
\node at (-20,200) {};
\draw [fill={rgb, 255:red, 0; green, 0; blue, 0 }  ,fill opacity=1 ] (0,300) circle (5);
\node at (-20,300) {};
\end{tikzpicture}
and \begin{tikzpicture}[x=1pt,y=1pt,yscale=0.2,xscale=0.3,baseline=20pt, line width = 1pt]
\draw [color={rgb, 255:red, 0; green, ; blue, 255 }, line width=1pt]  [-Stealth] (0,-100)--(0, 400);
\draw [color={rgb, 255:red, 0; green, 0; blue, 255 }, line width=1pt]  [-Stealth] (200,-100)--(200, 400);
\draw [color={rgb, 255:red, 0; green, 0; blue,0 }, line width=1pt]   [Stealth-] (0,300)--(200,0);
\node at (160,300) {$+$};
\draw [color={rgb, 255:red, 0; green, 0; blue,0 }, line width=1pt]   [-Stealth] (0,0)--(200,300) ;
\node at (40,300) {$+$};
\draw [color={rgb, 255:red, 0; green, 0; blue,0 }, line width=1pt]   [-Stealth] (0,150)--(200,150) ;
\node at (40,180) {$-$};

\draw  [fill={rgb, 255:red, 0; green, 0; blue, 0 }  ,fill opacity=1 ] (0,0) circle (5) ;
\node at (-20,0) {$O$};
\draw  [fill={rgb, 255:red, 0; green, 0; blue, 0 }  ,fill opacity=1 ] (0,150) circle (5) ;
\draw [fill={rgb, 255:red, 0; green, 0; blue, 0 }  ,fill opacity=1 ] (0,300) circle (5);

\draw  [fill={rgb, 255:red, 0; green, 0; blue, 0 }  ,fill opacity=1 ] (200,0) circle (5) ;
\node at (220,0) {$O$};
\draw  [fill={rgb, 255:red, 0; green, 0; blue, 0 }  ,fill opacity=1 ] (200,150) circle (5) ;
\draw [fill={rgb, 255:red, 0; green, 0; blue, 0 }  ,fill opacity=1 ] (200,300) circle (5);

\end{tikzpicture}.

\begin{figure}[htpb]
\begin{center}
\includegraphics[width = 8cm]{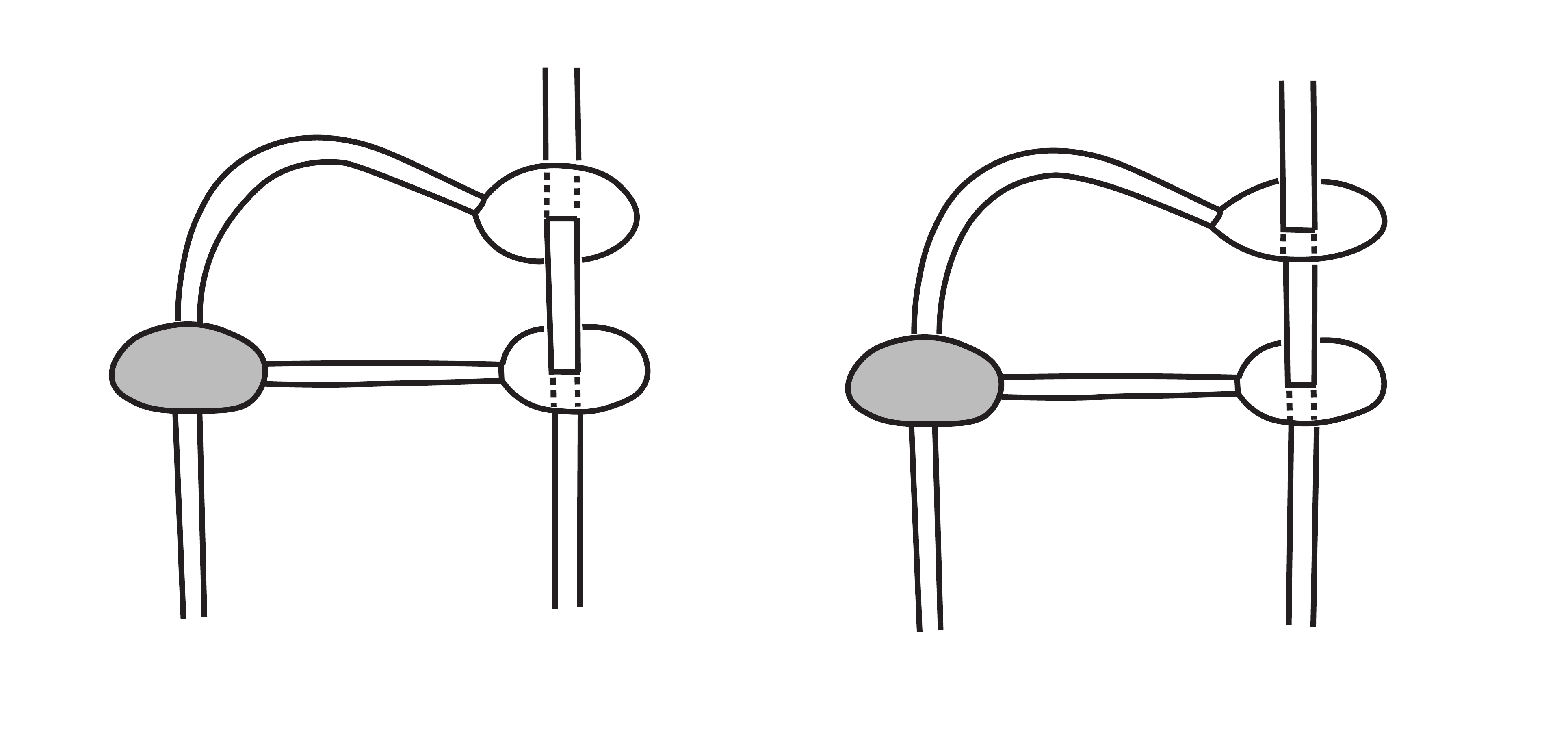}
\caption{Suitable ribbon presentation (left)}
\label{Suitable ribbon presentation (left)}
\end{center}
\end{figure}

\begin{notation}
Let $\phi$ and $\psi$ be the cycles constructed from the ribbon presentations $P_1$ and $P_2$ in subsection \ref {Construction of the cycles sub}. If $n-j=2$, define the cycle
\[
\psi(0,0,0) : S^{j-1} \longrightarrow \overline{\text{Emb}}(\mathbb{R}^j, \mathbb{R}^{j+2})
\]
as the $(0,0, 0)\in S^0\times S^0 \times S^0$ component of $\psi$, that is, the component where no crossing is resolved. Define $\phi(0,0)$ similarly.
\end{notation}

\begin{prop}
\label{unknot component}
The ribbon presentations $P_1$ and $P_2$ are equivalent to the trivial presentation. Thus, when $n-j \geq 3$, both $\phi$ and $\psi$ are in the unknot component. When $n-j = 2$, both $\phi(0,0)$ and $\psi(0,0,0)$ are in the unknot component.
\end{prop}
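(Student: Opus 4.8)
The plan is to establish the triviality of the ribbon presentations $P_1$ and $P_2$ by an explicit finite sequence of Habiro--Shima moves, and then to transfer this diagrammatic statement to a statement about path components of $\text{Emb}(\mathbb{R}^j,\mathbb{R}^n)$ by a connectivity argument on the parameter space. First I would recall the relevant input from \cite{HS}: the moves $S1$ and $S4$ (together with the rest of their list) do not change the isotopy class of the represented ribbon $2$-knot, and the trivial presentation --- the based disk $D_0$ with bands bounding trivially --- represents the standard long embedding $\iota$, since thickening a standardly embedded disk and taking its boundary recovers the standard sphere. Consequently it suffices to reduce $P_1$ and $P_2$ to the trivial presentation by moves, after which the underlying long knot $\partial V_{P_i}\#\iota(\mathbb{R}^j)$ is isotopic to $\iota$.

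The combinatorial heart of the argument is to exhibit such reductions. The distinguishing feature of these ``more than one node'' presentations is the grey node connected by two bands to two leaves, coming from the orbit--orbit chord. The essential point is the \emph{sign choice}: in the subdiagram identified above, the two crossings are assigned opposite orientations (the $+/-$ pattern on the chords), which corresponds, via the left presentation of Figure \ref{Suitable ribbon presentation (left)}, to the two leaves intersecting their bands in opposite orientations. With this choice an $S4$ move slides one leaf across the band incident to the node so that the two oppositely oriented crossings become cancellable, and an $S1$ move then removes the resulting trivial clasps, collapsing the node together with its two leaves. Iterating over the remaining bands untangles the diagram down to $D_0$ with trivial bands. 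This is the direct analogue of the triviality argument for wheel-like presentations in \cite{SW}, with the extra node handled by the cancellation above.

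It remains to transfer triviality of the presentations to the embedding families. For $n-j\geq 3$ the factor $S^{n-j-2}$ is positive-dimensional, hence connected, so the full parameter space of $\phi$ (respectively $\psi$) is connected; evaluating at the base configuration $\mathbf{v}=\mathbf{0}$, where no crossing is resolved (recall that the point $x_3=0$ of $S$ gives $B(0)$, the unperturbed band), the embedding coincides with $\partial V_{P_i}\#\iota$, which is isotopic to $\iota$ by the first two steps. Since a continuous family over a connected base lies in a single path component, the entire family lies in the unknot component. For $n-j=2$ the factors $S^{0}$ are discrete, so only the $(0,0)$ (respectively $(0,0,0)$) component is joined to $\iota$ by the above; there the remaining parameter $S^{j-1}$ is connected (as $j\geq 3$), and at $\theta=O$ the embedding is again the trivial ribbon embedding, so $\phi(0,0)$ and $\psi(0,0,0)$ land in the unknot component.

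The main obstacle I expect is the explicit move sequence of the second step, and in particular verifying that the node-with-two-leaves configuration is removable \emph{precisely} because of the opposite-orientation crossing structure: with equal orientations the two clasps would not cancel and the presentation could fail to be trivial. Carefully tracking the basedness, the orientations, and the ``two leaves intersecting bands'' data through each move --- so that one genuinely lands on the trivial presentation and not merely a simpler nontrivial one --- is the delicate part of the proof.
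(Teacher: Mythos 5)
Your proposal follows essentially the same route as the paper: the paper's proof likewise reduces $P_1$ and $P_2$ to the trivial presentation by resolving the pair of oppositely oriented crossings with $S4$ and $S1$ moves (its Figure ``Resolving the pair of two crossings''), and then concludes membership in the unknot component. Your added connectivity argument on the parameter space, and your emphasis that the cancellation hinges on the opposite-orientation sign choice, only make explicit what the paper leaves implicit, so there is no substantive difference in approach.
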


\begin{proof}
By using moves of ribbon presentations including $S4$, we can resolve the pair of two crossings of opposite orientations as in Figure \ref{Resolving the pair of two crossings}. 
Then, we can show the resulting ribbon presentation is equivalent to the trivial presentation. 
\end{proof}

\begin{figure}[htpb]

\labellist
\small \hair 10pt
\pinlabel $S1$ at 940 1550
\pinlabel $S4$ at 1630 950
\pinlabel $S1$ at 1930 590
\endlabellist

\begin{center}
\includegraphics[width = 11cm]{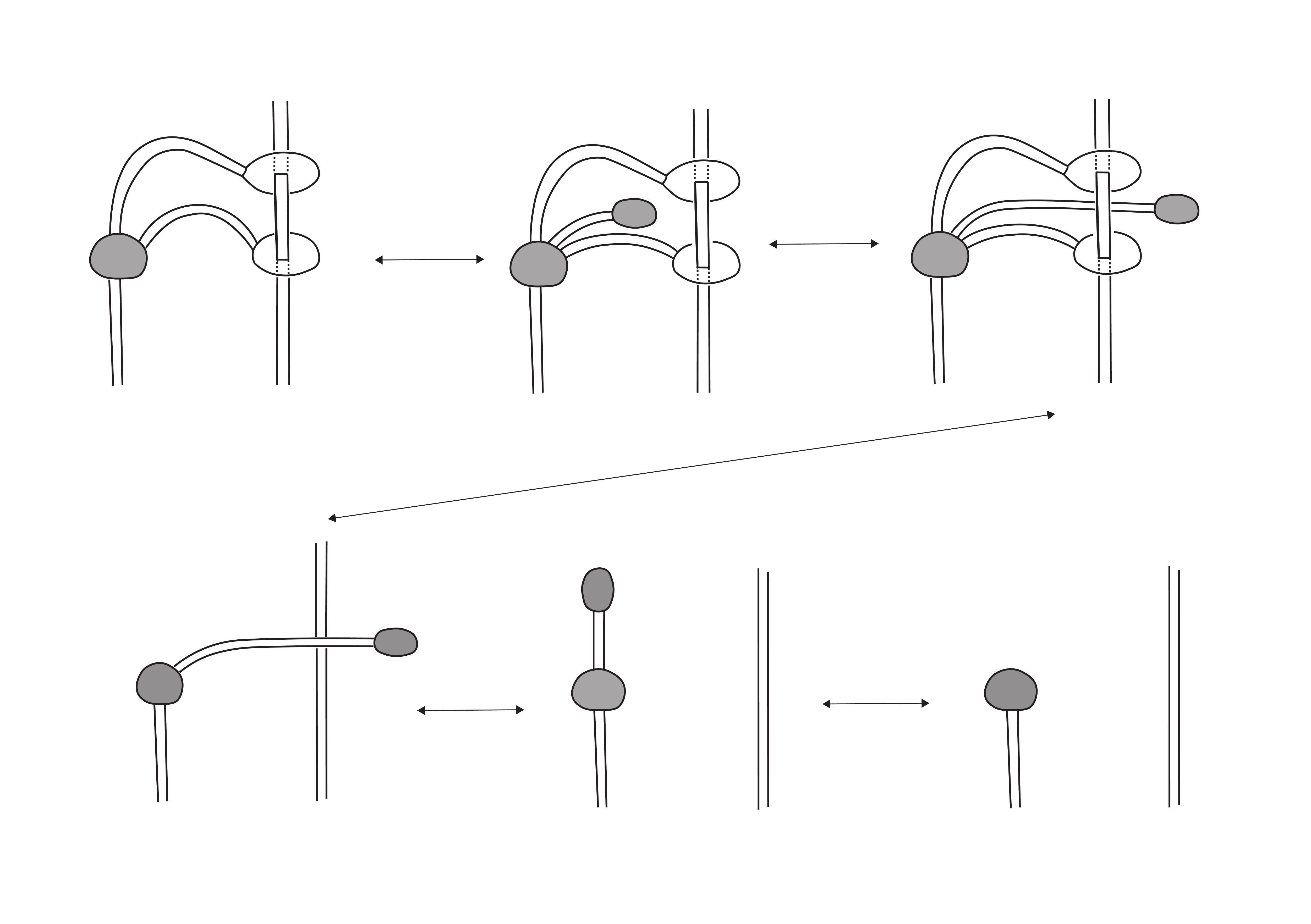}
\caption{Resolving the pair of two crossings}
\label{Resolving the pair of two crossings}
\end{center}
\end{figure}

\subsection{Lifting the cycles to $\overline{\text{Emb}}(\mathbb{R}^j, \mathbb{R}^n)$}
\label{Lifting the cycle}
Finally, we give a parametrization to the cycle $\phi$ and the cycle $\psi$ respectively. We give this parametrization by using a path to the trivially immersed $\mathbb{R}^j$ as in subsection \ref{The path to the trivial immersion}. Moreover, this path gives a lift of $\phi$ and $\psi$ to $\overline{\text{Emb}}(\mathbb{R}^j, \mathbb{R}^n)$.

The path is constructed as follows.  Let $\hat{D}_i$ be the ball of crossings corresponding to $D_{\theta}(\varepsilon)$. In the ribbon presentation, the disk $D_i$ is connected to the node. First, we resolve each crossing, by the move ($m1$) of subsection \ref{The path to the trivial immersion}. Then 
\begin{itemize}
\item ($m3$)  pull back $\hat{D}_i$ to the stem tube. (See Figure \ref{Lift}.)
\end{itemize}
Finally, we deform the immersion to the trivial immersion by the move ($m2$) of subsection \ref{The path to the trivial immersion}. 
\begin{figure}[htpb]

\labellist
\small \hair 10pt
\pinlabel $(m1)$ at 400 400
\pinlabel $(m1)$ at 740 400

\pinlabel $(m3)$ at 80 150
\pinlabel $(m3)$ at 500 150

\pinlabel $(m2)$ at 810 150

\endlabellist

\centering
\includegraphics[width = 12cm]{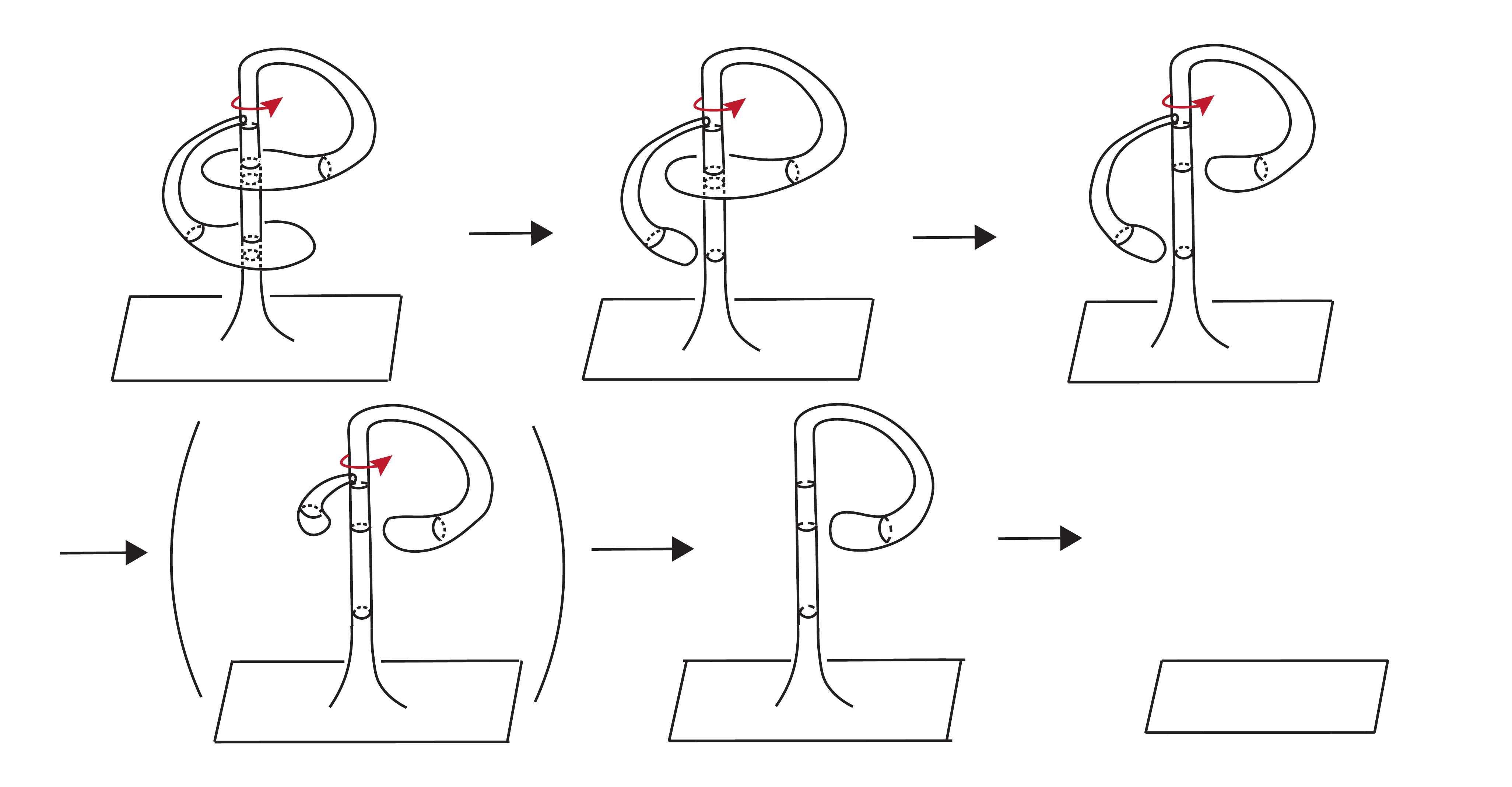}
\caption{Lift of $\phi$ to $\overline{\text{Emb}}(\mathbb{R}^j, \mathbb{R}^n)$}.
\label{Lift}
\centering
\end{figure}

\section{Some lemmas for computing cocycle-cycle pairing}
\label{Some lemmas for computing cocycle-cycle pairing}
In this section, we state preliminary lemmas which are used to calculate pairings between cocycles and cycles of the space of long embeddings, as pairings between graphs and diagrams (see Theorem \ref{Counting formula}).
Readers who are familiar with configuration space integrals may skip until Lemma \ref{Decomposing lemma}. 
Graphs in this section are admissible (see Definition \ref{admissible graphs}) graphs of defect 0.

\subsection{Preliminary lemmas}
\begin{lemma}[Symmetry lemma]
\label{Symmetry lemma}
Let a labeled BCR graph $\Gamma$ have an even symmetry $\iota$ (a change of labels of even sign that produces the same labeled graph). Consider the involution on $C_{s, t}$
\begin{equation*}
\sigma :  (x_1, x_2, \dots, x_s, x_{s+1},\dots, x_{s+t}) \mapsto (x_{\iota^{-1}(1)}, x_{\iota^{-1}(2)}, x_{\iota^{-1}(s)}, x_{\iota^{-1}(s+1)}, \dots, x_{\iota^{-1}(s+t)}).
\end{equation*}
Let the two disjoint submanifold $X$ and $Y$ of  $C_{s, t}$ satisfy $\sigma(X) = Y$.
Then $\int_X I(\Gamma)= \int_Y I(\Gamma)$ holds.
\end{lemma}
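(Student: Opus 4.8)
The plan is to reduce the asserted equality of integrals to a single sign computation: the pullback of the integrand by $\sigma$ reproduces the integrand up to a sign, the change of variables for $\sigma$ introduces a Jacobian sign, and the product of these two signs is exactly the sign by which $\iota$ acts on the orientation of $\Gamma$ — which is $+1$ precisely because $\iota$ is even. Throughout, the integrand over $C_{s,t}$ is the propagator form $\omega(\Gamma)$ (the restriction of the integrand defining $I(\Gamma)$), and I orient $X$ and $Y$ consistently as submanifolds of $C_{s,t}$, so that $\sigma\colon X\to Y$ relates their orientations by the Jacobian sign of $\sigma$ on $C_{s,t}$.

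First I would analyze how $\sigma$ acts on $\omega(\Gamma)=\bigwedge_{e}\phi_e^\ast\omega_{S^\bullet}$. Since the $i$-th coordinate of $\sigma(\mathbf{x})$ is $x_{\iota^{-1}(i)}$, the composite $\phi_{(i,j)}\circ\sigma$ is the direction map $\phi_{(\iota^{-1}(i),\iota^{-1}(j))}$ attached to the edge $\iota^{-1}(e)$. As $\iota$ is a symmetry of $\Gamma$ it permutes $E_\eta(\Gamma)$ and $E_\theta(\Gamma)$ among themselves, so $\sigma^\ast\omega(\Gamma)$ differs from $\omega(\Gamma)$ only by a reordering of the wedge factors and, for each edge whose orientation $\iota$ reverses, by replacing a propagator with its pullback under the antipodal map. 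Because $n$ and $j$ are odd, each factor $\phi_e^\ast\omega_{S^{n-1}}$ and $\phi_e^\ast\omega_{S^{j-1}}$ is an even-degree form, so the reordering costs no sign; and the antipodal map on $S^{n-1}$ (resp. $S^{j-1}$) pulls back the volume form with sign $(-1)^{n}=-1$ (resp. $(-1)^{j}=-1$). Hence $\sigma^\ast\omega(\Gamma)=(-1)^{R}\omega(\Gamma)$, where $R$ is the number of edges reversed by $\iota$.

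Next I would compute the Jacobian sign of $\sigma$ on $C_{s,t}$. The involution permutes the $s$ black points (each contributing a block $\mathbb{R}^j$) by some $\pi_B\in S_s$ and the $t$ white points (each contributing $\mathbb{R}^n$) by $\pi_W\in S_t$; transposing two blocks of dimension $d$ costs $(-1)^d$, so the Jacobian sign is $\operatorname{sgn}(\pi_B)^{j}\operatorname{sgn}(\pi_W)^{n}$, which equals $\operatorname{sgn}(\pi_V)$ for $n,j$ odd, $\pi_V$ being the induced permutation of all vertices. The key identification is that $(-1)^{R}\cdot\operatorname{sgn}(\pi_V)$ is precisely the sign of $\iota$ on the orientation line $\det\!\big(\mathbb{R}V(\Gamma)\oplus\bigoplus_e\mathbb{R}H(e)\big)$: the vertex permutation contributes $\operatorname{sgn}(\pi_V)$, the lift of the edge permutation to half-edge pairs is even, and each reversed edge swaps its two half-edges, contributing $(-1)^R$ on the half-edge factor. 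Since $\iota$ is an even symmetry this total sign is $+1$, i.e. $(-1)^{R}=\operatorname{sgn}(\pi_V)$. Concluding by change of variables,
\[
\int_Y I(\Gamma)=\operatorname{sgn}(\pi_V)\int_X \sigma^\ast I(\Gamma)=\operatorname{sgn}(\pi_V)(-1)^{R}\int_X I(\Gamma)=\int_X I(\Gamma).
\]

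I expect the main obstacle to be the sign bookkeeping: one must verify that the two independently computed signs — the antipodal-plus-reordering sign of the form and the orientation sign of the configuration space — assemble into exactly the sign of $\iota$ on the graph's orientation determinant, so that the definition of \emph{even symmetry} is precisely the condition making the product trivial. Matching the half-edge contribution against the antipodal sign, and confirming that the parity hypotheses $n,j$ odd make every individual flip contribute $-1$, is the delicate point; it is also here that one must pin down the convention by which $X$ and $Y$ are oriented (namely from the ambient orientation of $C_{s,t}$, not transported through $\sigma$), since transporting the orientation through $\sigma$ would suppress the $\operatorname{sgn}(\pi_V)$ factor and give the wrong criterion. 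Everything else is a routine application of naturality of pullbacks and the change-of-variables formula.
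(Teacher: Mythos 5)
Your proposal is correct and follows the same route as the paper: the paper's proof is just the change of variables $\int_Y I(\Gamma)=\int_{\sigma(X)}(\sigma^{-1})^{\ast}I(\Gamma)$ together with the remark that ``the changes of signs are canceled.'' Your sign bookkeeping --- the antipodal/reordering sign $(-1)^{R}$ on the integrand, the Jacobian sign $\operatorname{sgn}(\pi_V)$ for $n,j$ odd, and their identification with the action of $\iota$ on $\det\bigl(\mathbb{R}V(\Gamma)\oplus\bigoplus_e\mathbb{R}H(e)\bigr)$ --- is exactly the verification the paper leaves implicit, and your remark about orienting $X$ and $Y$ from the ambient $C_{s,t}$ pins down the convention correctly.
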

\begin{proof}
\begin{align*}
&\int_Y I(\Gamma)\\
=& \int_{\sigma(X)} (\sigma^{-1})^{\ast}I(\Gamma),\\
\end{align*}
since the changes of signs are canceled.
\end{proof}

\begin{lemma}[Localizing lemma {\cite[Lemma 4.7]{SW}}]
\label{Localizing lemma}
\label{Localizing lemma}
Let $\Gamma$ be a BCR graph of order $k$, $\#B(\Gamma)=s$ and $\#W(\Gamma)=t$.
Let $\psi$ be a generalized ribbon cycle with crossings $p_1, \dots, p_k.$
Recall that each crossing consists of the image of a ball $D_{p_i}$ (or $D_{p_i}(\theta^{\prime}_{\tau^{-1}(2i-1)})$) and the image of an annulus $I\times S_{p_i}$. Let $C_0$ be the subspace of $C_{s,t}(\psi)$ which consists of configurations $(x_1, x_2, \dots, x_s, x_{s+1}, \dots, x_{s+t})$ satisfying the following.
\begin{itemize}
\item There exists $i$ such that either $D_{p_i}$ or $I\times S_{p_i}$ has no black vertex.
\end{itemize}
Then $\int_{C_0} I(\Gamma) = 0$ holds.
\end{lemma}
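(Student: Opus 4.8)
Following the convention of the Symmetry Lemma, I read the symbol $\int_{C_0}I(\Gamma)$ as the integral of the propagator form $\omega(\Gamma)$ over the subset $C_0$ of the total space $C_{s,t}(\psi)$ of the configuration bundle restricted to the cycle $\psi$; this is exactly the $C_0$-part of the fibre integral computing the pairing $\int_\psi I(\Gamma)$. The plan is to kill it by integrating first over the perturbation directions. Recall that $\psi$ carries, for each crossing $p_i$, a perturbation parameter $y_i$ ranging over a factor $S_i := S^{n-j-2}$ of the domain, and that by construction the perturbation at $p_i$ moves only the band points (those on the image of $I\times S_{p_i}$), sliding each of them in the $y_i$-direction, while the disk $D_{p_i}$ and every configuration point outside the $i$-th crossing stay fixed as $y_i$ varies. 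Writing $C_0^{(i)}\subseteq C_{s,t}(\psi)$ for the locus where the $i$-th crossing has an empty component, each $C_0^{(i)}$ is a union of entire $S_i$-fibres, since moving $y_i$ neither creates nor destroys black vertices on the disk or band. Hence it suffices to prove that the inner integral $\int_{S_i}\omega(\Gamma)$ vanishes on $C_0^{(i)}$; then $\int_{C_0^{(i)}}\omega(\Gamma)=0$, and an inclusion--exclusion over the crossings (each intersection of the $C_0^{(i)}$ is again saturated for at least one $S_i$) yields $\int_{C_0}\omega(\Gamma)=0$.

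First I would dispose of the case in which the band $I\times S_{p_i}$ carries no black vertex. Since $y_i$ acts on the configuration solely through the band points of the $i$-th crossing, in this case no direction map $\phi_e$ depends on $y_i$, so $\omega(\Gamma)$ contains no wedge factor equal to the volume form of $S_i$. The fibre integral $\int_{S_i}\omega(\Gamma)$ is therefore zero when $n-j-2\geq 1$; in the codimension-two case $S_i=S^0$ the same $y_i$-independence makes the two summands equal, and as they are counted with opposite orientations they cancel.

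The case in which the disk $D_{p_i}$ is empty but the band is not is the heart of the argument, because now the integrand genuinely depends on $y_i$. Here I would run a degree count on the Gauss-type map $y_i\mapsto(\phi_e)_e$ built from the edges incident to the band points of the $i$-th crossing, using the scales of the shrunk model $\varphi^{\mathbf v,\varepsilon}_k$: within a crossing everything sits at scale $\varepsilon$, whereas distinct crossings and the stem tube lie an order-one distance apart. The point is that a dashed edge can contribute the volume form of $S_i$ only if it joins a band point to a \emph{disk} point of the same crossing; only then are its two endpoints separated by a distance comparable to the $y_i$-displacement, so that $y_i\mapsto (x_q-x_p)/\|x_q-x_p\|$ restricts to a degree $\pm1$ map onto the relevant $S^{n-j-2}$ and pulls $\omega_{S^{n-1}}$ back to a fixed form times $\omega_{S^{n-j-2}}$. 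A dashed edge running from a band point to a point outside the crossing, or to a co-moving band point, yields a map whose restriction to $S_i$ has degree zero, hence no $\omega_{S_i}$-component. With the disk empty there is no disk-to-band dashed edge at the $i$-th crossing, so every term of $\omega(\Gamma)$ proportional to $\omega_{S_i}$ vanishes and $\int_{S_i}\omega(\Gamma)=0$. When $n-j=2$ this degree-zero statement is again replaced by an orientation-reversing involution of $S^0$ exchanging the two resolutions, in the spirit of the symmetry arguments of Proposition \ref{basic case}.

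I expect the disk-empty case to be the main obstacle: turning the heuristic that ``far edges have degree zero'' into a rigorous statement for a fixed small $\varepsilon$ is delicate. The cleanest route is probably to pass to the compactification $\overline{C}_{s,t}(K,\mathbb{R}^n)$ and analyse the boundary face along which the band points of the $i$-th crossing collide; there the local propagators factor through the quotient by the overall translation and scaling of that cluster, and the absence of a disk vertex forces the local contribution to factor through a space of dimension below $n-j-2$, so the associated $S_i$-integral must vanish. I would also verify that the decomposition of $C_0$ by crossing and by empty component is compatible with orientations, so that the fibrewise vanishing assembles into $\int_{C_0}I(\Gamma)=0$.
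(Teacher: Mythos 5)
Your proposal is correct and follows essentially the same strategy as the paper: the paper's entire proof is the single sentence that the contribution "vanishes by dimensional reason (or is canceled) with respect to the parameter $y_i$ of the crossing $p_i$," which is exactly your plan of integrating first over the perturbation sphere $S_i=S^{n-j-2}$ and observing that the integrand either is independent of $y_i$ (band empty, with the $S^0$ cancellation when $n-j=2$) or carries no top-degree component along $S_i$ (disk empty). Your write-up is in fact considerably more detailed than the paper's, and the delicate point you flag in the disk-empty case for fixed $\varepsilon$ is not addressed by the paper either (elsewhere, e.g.\ after the Same system lemma, the paper clarifies that such vanishing is meant in the limit as the crossing radii shrink).
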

\begin{proof}
Let $p_i$ be such a crossing. Then the contribution of the integral vanishes by dimensional reason (or is canceled) with respect to the parameter $y_i$ of the crossing $p_i$.
\end{proof}

Set 
\begin{equation*}
X_ \alpha =  
\begin{cases}
D_ {p_i }\ \text{or}\ D_ {p_i }(\theta^{\prime}_{\tau^{-1}(2i-1)}) & \text{if}\quad \alpha = 2i-1 \\
I \times S_ {p_i} & \text{if}\quad  \alpha = 2i.\\
\end{cases}
\end{equation*}

By Localizing lemma, we only consider the graph with $s= 2k$ and $t=0$, and configurations that there are some black vertex $x_{\sigma^{-1}(\alpha)}$ in $X_{\alpha}$. We write the set of these configurations as $C_{\sigma}$, $\sigma \in \mathfrak{S}_{2\alpha}$. If $\sigma \neq \tau$, $C_{\sigma}$ and $C_{\tau}$ have no intersection.

\begin{lemma}[Pairing lemma {\cite[Lemma 4.9]{SW}}]
\label{pairing lemma}
If $x_{\sigma^{-1}(2i-1)} \in D_ {p_i }$  and  $x_{\sigma^{-1}(2i)} \in I \times S_ {p_i} $ are not connected by dashed edges, $\int_{C_{\sigma}} I(\Gamma) = 0$ holds. 
\end{lemma}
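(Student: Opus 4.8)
The plan is to reproduce, in the generalized ribbon setting, the localization argument of Sakai and Watanabe (Lemma~4.9 of \cite{SW}) by isolating the perturbation parameter $y_i \in S^{n-j-2}$ of the single crossing $p_i$ at which the dashed connection fails. The constituent $\int_{C_\sigma} I(\Gamma)$ of the graph--diagram pairing is computed by integrating the propagator form $\omega(\Gamma)$ (Definition~\ref{Propagator}) over the part $C_\sigma$ of the configuration bundle over $\psi$, and this integration runs in particular over the crossing parameter $y_i$. The whole point is that the volume form $\omega_{S^{n-j-2}}$ in the $y_i$-direction, which is needed for the integral over $S^{n-j-2}$ to survive, can be produced only by a dashed edge joining the (unperturbed) ball vertex $x_{\sigma^{-1}(2i-1)} \in D_{p_i}$ to the (perturbed) annulus vertex $x_{\sigma^{-1}(2i)} \in I \times S_{p_i}$. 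If no such edge exists, I aim to show the $y_i$-integral vanishes.

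First I would invoke the Localizing lemma (Lemma~\ref{Localizing lemma}) to reduce to graphs with $s = 2k$, $t = 0$ and to configurations in which each crossing $p_i$ carries exactly one ball vertex $P_i := x_{\sigma^{-1}(2i-1)}$ and one annulus vertex $Q_i := x_{\sigma^{-1}(2i)}$; since the graph has defect $0$ and no white vertices, each black vertex meets exactly one dashed edge. Next I would track the $y_i$-dependence of each factor of $\omega(\Gamma) = \bigwedge_e \phi_e^\ast \omega$. Solid edges (Definition~\ref{Direction map}) measure directions in the domain $\mathbb{R}^j$ and are therefore independent of $y_i$, because the perturbation of subsection~\ref{Perturbation of crossings} displaces the band only in the $\mathbb{R}^{n-j-2}$-coordinates and leaves the domain configuration fixed. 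The dashed edge at $P_i$ joins two points unaffected by $y_i$ (the ball is not perturbed, and its partner lies in another crossing by hypothesis), so it too is $y_i$-independent. Hence the only factor that can feel $y_i$ is the dashed edge at $Q_i$, whose other endpoint lies, again by hypothesis, in a different crossing $p_{i'}$.

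The hard part will be to show that this remaining, inter-crossing dashed edge cannot contribute a top $(n-j-2)$-form in $dy_i$. Here I would use the scale separation built into the shrunken model $\varphi^{\mathbf{v},\varepsilon}_k$: the crossings sit at the mutually bounded-away centers $p_{i'} = i'/k$, while the displacement $\gamma(\cdot)\,y_i$ moves $Q_i$ only within its own crossing, at the internal scale. Consequently the direction from $Q_i$ to any vertex of $p_{i'}$ is governed, to leading order, by the fixed inter-crossing displacement in $\mathbb{R}^3 \times \mathbb{R}^{j-1}$, and its dependence on $y_i$ is of strictly lower order, producing no $\omega_{S^{n-j-2}}(y_i)$ factor. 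By contrast, had $P_i$ and $Q_i$ been joined, both endpoints would lie in the same crossing with comparable $\mathbb{R}^j$- and $\mathbb{R}^{n-j-2}$-displacements, and that edge would capture $y_i$ at full rank. Thus, absent the within-crossing edge, the integrand is $y_i$-independent at leading order, and the $y_i$-integral vanishes: for $n-j-2>0$ there is no $(n-j-2)$-form in $dy_i$ to integrate over $S^{n-j-2}$, while for $n-j=2$ the parameter $y_i \in S^0$ is a signed pair of resolutions and $y_i$-independence forces them to cancel. This yields $\int_{C_\sigma} I(\Gamma)=0$.

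The genuine obstacle is therefore exactly this scale-separation estimate — making rigorous that an inter-crossing dashed edge is, to the relevant order, insensitive to the crossing-internal perturbation, so that it cannot generate the $S^{n-j-2}$ volume form. This is the technical heart of the localization and is where the planetary-system geometry must be used with care. The only feature new compared with \cite{SW} is that a generalized ribbon cycle may have several nodes and orbit--orbit pairings, contributing the extra $S^{j-1}$-parameters $\theta'_l$; since these leave the local picture around the crossing $p_i$ unchanged, I expect the Sakai--Watanabe argument to adapt without essential modification.
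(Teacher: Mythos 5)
Your proposal is correct and follows essentially the same route as the paper, whose own proof is a one-line appeal to ``a similar dimensional reason to the proof of Lemma \ref{Localizing lemma}'', i.e.\ precisely the observation that without a dashed edge joining $D_{p_i}$ to $I\times S_{p_i}$ the integrand cannot produce the top-degree form in the crossing parameter $y_i\in S^{n-j-2}$ (or, for $n-j=2$, the two resolutions cancel). The scale-separation point you flag as the technical heart is the same one the paper leaves implicit (cf.\ the remark after the Same system lemma that these vanishings are limits as the radii shrink), so your elaboration is consistent with, and more explicit than, the paper's argument.
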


\begin{proof}
This can be shown by a similar dimensional reason to the proof of Lemma~\ref{Localizing lemma}.
\end{proof}

\begin{lemma}[Same system lemma {\cite[Lemma 4.8]{SW}}]
\label{Same system lemma}
Let $\Gamma$ be a BCR graph of order $k$ (and defect $0$), $\#B(\Gamma) = 2k$ and $\#W(\Gamma) = 0$.
Let $\psi$ be a generalized ribbon cycle with crossings $p_1, \dots, p_k.$
If $X_{\alpha}$ and $X_{\beta}$ are in different planetary systems, and $x_{\sigma^{-1}(\alpha)}$ and $x_{\sigma^{-1}(\beta)}$ are connected by solid edges (see Figure \ref{An edge connecting different systems}),
$\int_{C_{\sigma}} I(\Gamma) = 0$ holds.

\end{lemma}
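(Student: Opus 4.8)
The plan is to prove this exactly as the Localizing lemma \ref{Localizing lemma} and the Pairing lemma \ref{pairing lemma} were proved, and as in Proposition \ref{basic case}: by exhibiting a rescaling symmetry of the stratum $C_\sigma$ under which the integrand $\omega(\Gamma)$ degenerates, so that the top-degree form being integrated factors through a space of strictly smaller dimension.

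First I would record the reductions already in hand. After the Localizing lemma \ref{Localizing lemma} we may take $s=2k$, $t=0$, and assume that on $C_\sigma$ each piece $X_\gamma$ carries exactly one black vertex; after the Pairing lemma \ref{pairing lemma}, for a nonzero contribution the two black vertices in the disk and the annulus of each crossing are joined by a dashed edge. Since $\Gamma$ has defect $0$ and order $k$, we have $\#E_\theta(\Gamma)=k$ and every black vertex is incident to exactly one dashed edge; hence the $k$ intra-crossing dashed edges exhaust $E_\theta(\Gamma)$, which is consistent with the hypothesis that the edge $e$ between $X_\alpha$ and $X_\beta$ is solid. Write $x_a=x_{\sigma^{-1}(\alpha)}$ and $x_b=x_{\sigma^{-1}(\beta)}$, with domain points $u_a,u_b\in\mathbb{R}^j$ lying in systems $i\neq i'$.

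The geometric input is the scale separation built into a set of planetary systems. The fixed stars $a_i=(i,0,\dots,0)$ all lie on the $x_1$-axis at mutual distance $\geq 1$, while each system has diameter at most its largest orbit radius $L^{t_i}_i=\delta \ll 1$, and the radii obey the hierarchy $L^l_i\ll L^{l'}_i$ for $l<l'$. The solid edge $e$ records the $\mathbb{R}^j$-direction $\phi_e=(u_b-u_a)/\lVert u_b-u_a\rVert\in S^{j-1}$, and because $u_a,u_b$ lie in different systems this direction is confined to a geodesic ball of radius $O(L)$ about $\pm e_1$: none of the internal parameters of the two systems (the turning parameters $\theta'$ and the within-piece configuration) moves $\phi_e$ across a full $(j-1)$-dimensional region. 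The vanishing then follows as in Proposition \ref{basic case}: I rescale one entire system about its fixed star (all of its orbits simultaneously), an $\mathbb{R}_{>0}$-action on $C_\sigma$ with positive-dimensional orbits. Every solid direction internal to that system is invariant because a homothety preserves directions, while $\phi_e$ and any dashed direction straddling the two systems become invariant in the limit where the orbit radii shrink; in that limit $\omega(\Gamma)$ descends to the quotient $C_\sigma/\mathbb{R}_{>0}$, whose dimension is below the degree of $\omega(\Gamma)$, forcing $\int_{C_\sigma}\omega(\Gamma)=0$. Equivalently, since the chord diagram fixes the systems only up to an overall rescaling of the radii and shrinking $\delta$ is a homotopy of $\psi$ through embeddings, the contribution of $C_\sigma$ is $O\!\left(L^{\,j-1}\right)$ while being independent of $\delta$, hence zero.

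The hard part will be making the degeneration exact rather than merely asymptotic: one must check that no crossing-resolution parameter feeds into $\phi_e$ at leading order, so that $\phi_e^{*}\omega_{S^{j-1}}$ is a genuinely lower-dimensional factor on $C_\sigma$ and not just a small one. This is where the collinearity of the fixed stars on the $x_1$-axis and the hierarchy $L^l_i\ll L^{l'}_i\ll 1$ against the inter-system distance enter, and where the boundary cases — configurations in which vertices of a single system collide as its radius shrinks — must be handled using the compactification $\overline{C}_{s,t}(K,\mathbb{R}^n)$ of \cite{BT} and the extension of the direction maps to its boundary. Once the symmetry is seen to leave the integrand invariant, the conclusion $\int_{C_\sigma} I(\Gamma)=0$ is immediate.
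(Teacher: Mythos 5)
Your core observation is exactly the paper's proof: because the two planetary systems are small (radii $\leq \delta \ll 1$) and their fixed stars are at mutual distance $\geq 1$ on the $x_1$-axis, the direction $\phi_e = (x_{\sigma^{-1}(\beta)}-x_{\sigma^{-1}(\alpha)})/\lVert x_{\sigma^{-1}(\beta)}-x_{\sigma^{-1}(\alpha)}\rVert$ varies only in a small neighborhood of a point of $S^{j-1}$, so $\phi_e^{*}\omega_{S^{j-1}}$ contributes $O(L^{j-1})$ and the integral over $C_\sigma$ tends to $0$ as the radii shrink. The paper's proof is precisely this one sentence, and the remark immediately following the lemma concedes that ``$\int_{C_\sigma} I(\Gamma)=0$'' is to be read asymptotically: the integral approaches $0$ as the radii of the two systems approach $0$. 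So the ``hard part'' you flag --- upgrading the degeneration from asymptotic to exact --- is not something the paper attempts or needs; you are solving a harder problem than the one posed.

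Two cautions about the extra layer you add. First, rescaling one entire system about its fixed star is \emph{not} an exact symmetry of the integrand on the open stratum $C_\sigma$: the black vertices are constrained to lie on the embedded image $\psi(\mathbb{R}^j)$, which is not linear near a crossing, so a domain homothety does not preserve the dashed directions internal to that system. (This is unlike Proposition \ref{basic case}, where the $\mathbb{R}_{+}$-action lives on the normalized boundary stratum and is a genuine symmetry.) Second, the claim that $\int_{C_\sigma}\omega(\Gamma)$ is ``independent of $\delta$'' is unsupported: shrinking $\delta$ is a homotopy of the cycle, which controls the pairing of the \emph{closed total form} with the \emph{whole} cycle, not the contribution of an individual stratum $C_\sigma$. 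Neither issue is fatal, because the first paragraph of your argument already reproduces the paper's (asymptotic) proof; but as written the attempted strengthening would not go through.
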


\begin{figure}
\begin{center}
\tikzset{every picture/.style={line width=1pt, xscale=0.5pt, yscale = 0.5pt}}  

\begin{tikzpicture}

\draw (2,-1) rectangle (24, 7);

\draw (6,3) circle(3) [line width = 2pt] [color= {rgb, 255:red, 0; green, 0; blue, 0 }, fill opacity =1.0 ];

\draw (6,3) circle(2) [line width = 2pt] [color= {rgb, 255:red, 0; green, 0; blue, 0 }, fill opacity =1.0];
\node at (6, 1.3) {\scriptsize $X_{\alpha}$};

\draw (6,3) circle(1) [line width = 2pt] ;

\begin{scope}[xshift = 14cm]

\draw (6,3) circle(3) [line width = 2pt] [color= {rgb, 255:red,0; green, 0; blue, 0 }, fill opacity =1.0 ];

\draw (6,3) circle(2) [line width = 2pt] [color= {rgb, 255:red, 0; green, 0; blue, 0}, fill opacity =1.0];

\draw (6,3) circle(1) [line width = 2pt] ;
\node at (6, 2.3) {\scriptsize $X_{\beta}$};

\end{scope}


\draw (8 ,3) circle (0.1) [fill = {rgb, 255:red, 0; green, 0; blue, 0 }, fill opacity =1.0];
\draw (19 ,3) circle (0.1) [fill = {rgb, 255:red, 0; green, 0; blue, 0 }, fill opacity =1.0];

\draw (8,3) -- (19,3);

\end{tikzpicture}
\caption{An edge connecting different systems}
\label{An edge connecting different systems}
\end{center}

\end{figure}
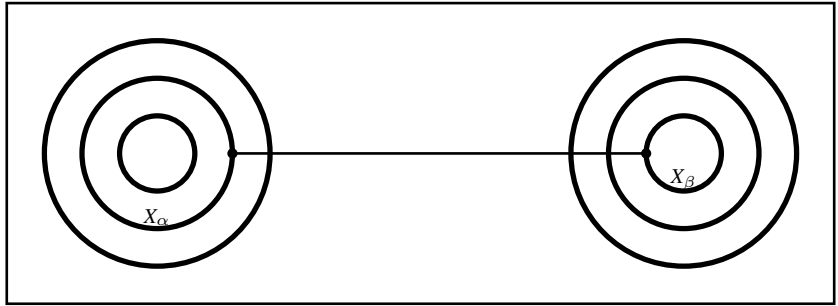

\begin{proof}
This is because the direction $\frac{x_{\sigma^{-1}(\beta)} - x_{\sigma^{-1}(\alpha)}}
{||x_{\sigma^{-1}(\beta)} - x_{\sigma^{-1}(\alpha)}||} \in S^{j-1}$ varies in a small neighborhood of a point in $S^{j-1}$. 
\end{proof}

 \begin{rem}
 More precisely, $\int_{C_{\sigma}} I(\Gamma) = 0$ above means $\int_{C_{\sigma}} I(\Gamma) $ approaches $0$ as the radii of the two planetary systems approach $0$. 
 \end{rem}

\begin{lemma}[Ingoing lemma]
\label{Ingoing lemma}
Let $\Gamma$ be a BCR graph of order $k$, $\#B(\Gamma) = 2k$ and $\#W(\Gamma) = 0$.
Let $\psi$ be a generalized ribbon cycle with crossings (pairings) $p_1, \dots, p_k.$
Let $X_{\alpha}$ be a planetary orbit in a planetary system.
Assume that all solid edges adjacent to $x_{\sigma^{-1}(\alpha)}$ are outgoing. That is,  if $x_{\sigma^{-1}(\alpha)}$ and  $x_{\sigma^{-1}(\beta)}$ are connected by solid edges,  $X_{\beta}$ is outside $X_{\alpha}$ (see Figure \ref{Outgoing edges}).
Then $\int_{C_{\sigma}} I(\Gamma) = 0$. 
\end{lemma}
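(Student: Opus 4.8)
The plan is to show that the integrand $\omega(\Gamma)$ drops rank along the angular directions of $x_{\sigma^{-1}(\alpha)}$ on its orbit, so that the fibre integral over those directions vanishes; this is a dimensional argument of the same flavour as Lemma~\ref{Same system lemma} and Lemma~\ref{Localizing lemma}. First I would record the structural input from defect $0$: since $\#W(\Gamma)=0$ and $l(\Gamma)=0$, every black vertex carries exactly one dashed edge, so the dashed edges form a perfect matching on the $2k$ black vertices ($\#E_\theta=k$, whence $\#E_\eta=k+g-1$). In particular $x_{\sigma^{-1}(\alpha)}$ has a single dashed edge, and by Lemma~\ref{pairing lemma} a nonzero contribution of $C_\sigma$ forces this edge to join $x_{\sigma^{-1}(\alpha)}$ to the vertex sitting in the partner ball of its crossing; all the remaining edges at $x_{\sigma^{-1}(\alpha)}$ are solid. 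Next I would set up coordinates adapted to the orbit: write the position of $x_{\sigma^{-1}(\alpha)}$ on $X_\alpha$ as a point $\theta\in S^{j-1}$ on the orbit sphere $S_{p_i}$ of radius $L$ together with a radial coordinate in the tube $I$, so that $\theta$ accounts for $j-1$ fibre dimensions.

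The heart of the argument is the behaviour of the solid propagators in the scale hierarchy $L^{l}_i \ll L^{l'}_i$. For an outgoing solid edge $e=(x_{\sigma^{-1}(\alpha)},x_{\sigma^{-1}(\beta)})$ with $X_\beta$ outside $X_\alpha$, both endpoints share the centre of the planetary system while $x_{\sigma^{-1}(\beta)}$ lies on a far larger orbit, so the $\mathbb{R}^j$-direction
\[
\frac{x_{\sigma^{-1}(\beta)}-x_{\sigma^{-1}(\alpha)}}{\lVert x_{\sigma^{-1}(\beta)}-x_{\sigma^{-1}(\alpha)}\rVert}
\]
converges, as $L\to 0$, to the direction from the centre to $x_{\sigma^{-1}(\beta)}$, i.e. to the angular coordinate of the \emph{outer} endpoint. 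Hence each such $\phi_e^{\ast}\omega_{S^{j-1}}$ becomes independent of $\theta$, and no solid propagator at $x_{\sigma^{-1}(\alpha)}$ contributes a $d\theta$. (Symmetrically, this is precisely why a solid edge registers the angular position of its outer endpoint, so that a vertex's own orbital angle is detected only through edges running inward.)

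It then remains to rule out that the single dashed propagator at $x_{\sigma^{-1}(\alpha)}$ supplies the missing $\theta$-directions. Here I would exploit the shape of the perturbed band $B(v)$: its displacement into the $S^{n-j-2}$-directions is governed by the bump factor $\gamma$, so that the dashed propagator of the crossing is, to leading order in the hierarchy, a function of the crossing parameter $v\in S^{n-j-2}$ and of the partner ball vertex, and it is the only propagator in $\omega(\Gamma)$ that can detect $v$. Consequently this one $(n-1)$-form is already consumed in detecting the $(n-j-2)$-dimensional crossing direction and cannot, in addition, span the full $(j-1)$-dimensional angular fibre of $x_{\sigma^{-1}(\alpha)}$; thus $\omega(\Gamma)$ carries no component containing the top $\theta$-form and $\int_{C_\sigma}I(\Gamma)=0$.

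I expect the last step to be the main obstacle: making precise, in the simultaneous limit of the radii $L^{l}_i$ and of the band width, that the single dashed form attached to $x_{\sigma^{-1}(\alpha)}$ cannot account for both $v$ and the orbital angle $\theta$ at once. The clean way to package this is to exhibit a quotient of $C_\sigma$ that forgets the angular position of $x_{\sigma^{-1}(\alpha)}$ about the centre, and through which all propagators factor in the limit, so that $\omega(\Gamma)$ is pulled back from a space of dimension $j-1$ smaller; the vanishing of the fibre integral is then immediate. The remaining verifications — that the radial and parameter directions are genuinely detected elsewhere, consistently with the degree count $\#E_\eta=k+g-1$, $\#E_\theta=k$ — are routine bookkeeping of the kind already used in Lemmas~\ref{Localizing lemma}--\ref{Same system lemma}.
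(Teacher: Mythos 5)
Your overall strategy coincides with the paper's: show that no propagator depends on the $(j-1)$-dimensional angular coordinate of $x_{\sigma^{-1}(\alpha)}$ on its orbit, so that $\omega(\Gamma)$ is pulled back along a quotient $C_\sigma \to C_\sigma/X_\alpha$ of dimension $j-1$ smaller, and the fibre integral vanishes. Your structural bookkeeping is correct (defect $0$ with $\#W=0$ forces each black vertex to carry exactly one dashed edge, so the dashed edges form a perfect matching with $\#E_\theta=k$, $\#E_\eta=k+g-1$, and Lemma \ref{pairing lemma} forces the unique dashed edge at $x_{\sigma^{-1}(\alpha)}$ to go to the partner of its crossing), and your treatment of the outgoing solid edges (each propagator depends, in the scale hierarchy, only on the angular position of its \emph{outer} endpoint) is exactly the first bullet of the Decomposing lemma \ref{Decomposing lemma}.

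The gap is in the step you yourself flag as the main obstacle. The claim that the dashed propagator ``is already consumed in detecting the $(n-j-2)$-dimensional crossing direction and cannot, in addition, span the full $(j-1)$-dimensional angular fibre'' is not a valid argument: since $(n-j-2)+(j-1)=n-3\leq n-1$, there is no degree obstruction to a single $(n-1)$-form having components that involve $v$, the ball vertex, and $d\theta$ simultaneously, so nothing is ``used up.'' What actually kills the $\theta$-dependence is geometric, not numerical: the entire shell $I\times S_p$ is mapped into the small crossing neighbourhood $\hat U_i$, with the $S_p$-factor parametrizing the thin link of the band core, so to leading order the direction map from the ball to a point $(r,\theta)$ of the annulus depends only on the ball point and on $r\in I$. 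This is precisely the second bullet of the Decomposing lemma \ref{Decomposing lemma}, and it — rather than a counting argument — is the input needed to conclude that every propagator factors through the quotient forgetting $\theta$. Note also that ``planetary orbit'' must cover the case $X_\alpha=D_{p_i}(\theta')$, a small ball riding on an orbit, where the forgotten $(j-1)$-dimensional parameter is $\theta'\in S^{j-1}$ rather than the $S_p$-factor of an annulus; the same factorization argument applies there, but your coordinates only set up the annulus case.
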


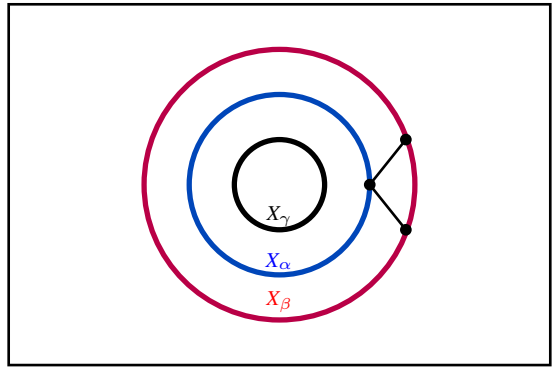
\begin{figure}[htpb]
\begin{center}
\tikzset{every picture/.style={line width=1pt, xscale=0.6pt, yscale = 0.6pt}}  

\begin{tikzpicture}

\draw (0,-1) rectangle (12, 7);

\draw (6,3) circle(3) [line width = 2pt] [color= {rgb, 255:red, 185; green, 0; blue, 70 }, fill opacity =1.0 ];
\node  [red] at (6, 0.4) {\scriptsize $X_{\beta}$};

\draw (6,3) circle(2) [line width = 2pt] [color= {rgb, 255:red, 0; green, 70; blue, 185 }, fill opacity =1.0];
\node [blue] at (6, 1.3) {\scriptsize $X_{\alpha}$};

\draw (6,3) circle(1) [line width = 2pt] ;
\node at (6, 2.3) {\scriptsize $X_{\gamma}$};


\draw (8.8,2) circle (0.1) [fill = {rgb, 255:red, 0; green, 0; blue, 0 }, fill opacity =1.0];
\draw (8.8 ,4) circle (0.1) [fill = {rgb, 255:red, 0; green, 0; blue, 0 }, fill opacity =1.0];
\draw (8 ,3) circle (0.1) [fill = {rgb, 255:red, 0; green, 0; blue, 0 }, fill opacity =1.0];

\draw (8.8, 2) -- (8,3) --(8.8, 4);

\end{tikzpicture}
\caption{Outgoing edges}
\label{Outgoing edges}
\end{center}

\end{figure}

\begin{proof}
Since the integrand form does not depend on the parameter $x_{\sigma^{-1}(\alpha)}$, it factors through $C_\sigma \rightarrow C_\sigma/ X_{\alpha} $.
\end{proof}

\begin{lemma}[Decomposing lemma]
\label{Decomposing lemma}
\begin{itemize}
\item
Let $e = \{\sigma^{-1}(\alpha), \sigma^{-1}(\beta)\} $ be a solid edge. Let $X_{\alpha}$ and $X_{\beta}$ be in the same planetary system and $X_{\beta}$ be outside.
Then if $X_{\beta} = I \times S_p$, $\phi_{e}^{\ast} \omega_{S^{j-1}}$ depends only on the variable $\theta_{\sigma^{-1}(\beta)} \in S_p$ of $x_{\sigma^{-1}(\beta)} = (r_{\sigma^{-1}(\beta)}, \theta_{\sigma^{-1}(\beta)}) \in X_{\beta}$. If $X_{\beta} = D_p(\theta^{\prime})$, it depends only on $\theta^{\prime}$.
\item
Let $e = \{\sigma^{-1}(\alpha), \sigma^{-1}(\beta)\} $ be a dashed edge. Let $X_{\alpha} = D_p$ (or $D_p(\theta^{\prime}$)) and $X_{\beta} = I \times S_p$. Then $\phi_{e}^{\ast} \omega_{S^{n-1}}$ depends on only on the variable $x_{\sigma^{-1}(\alpha)} \in X_{\alpha}$ and on the first factor $r_{\sigma^{-1}(\beta)} \in I $ of $x_{\sigma^{-1}(\beta)} = (r_{\sigma^{-1}(\beta)}, \theta_{\sigma^{-1}(\beta)}) \in X_{\beta}.$
\end{itemize}
\end{lemma}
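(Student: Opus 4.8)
The plan is to prove both assertions as \emph{limiting} statements, in the same spirit as the Remark after Lemma \ref{Same system lemma}: I will show that, as the orbit radii $L^l_i$ tend to $0$, the relevant direction map factors through the projection forgetting the claimed redundant coordinate, so that its pullback of the volume form cannot contain the corresponding differential. Throughout I work on a fixed component $C_\sigma$, so that by Lemma \ref{Localizing lemma} and Lemma \ref{pairing lemma} the only configurations that contribute carry, in each crossing $p$, one black vertex in its ball $D_p$ (or $D_p(\theta')$) and one in its annulus $I\times S_p$, joined by a dashed edge. I use the splitting $\mathbb{R}^n = \mathbb{R}^3\times\mathbb{R}^{n-j-2}\times\mathbb{R}^{j-1}$ fixed in Section \ref{Review of Sakai and Watanabe's construction of the wheel-like cycles}, together with the local model $B(v)$, $D$ of a perturbed crossing from subsection \ref{Perturbation of crossings}.

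For the solid case the key input is the hierarchy $L^l_i \ll L^{l'}_i$ for $l<l'$. Since $X_\alpha$ and $X_\beta$ lie in the same system centred at a fixed star $a$, with $X_\beta$ the outer one, I would write $x_{\sigma^{-1}(\beta)} = a + \rho_\beta$ and $x_{\sigma^{-1}(\alpha)} = a + \rho_\alpha$ with $\|\rho_\alpha\| \ll \|\rho_\beta\|$, so that
\[
\frac{x_{\sigma^{-1}(\beta)}-x_{\sigma^{-1}(\alpha)}}{\|x_{\sigma^{-1}(\beta)}-x_{\sigma^{-1}(\alpha)}\|} = \frac{\rho_\beta}{\|\rho_\beta\|} + O\!\left(\frac{\|\rho_\alpha\|}{\|\rho_\beta\|}\right).
\]
In the limit the solid direction is thus $\rho_\beta/\|\rho_\beta\|$, the direction from the centre to $x_{\sigma^{-1}(\beta)}$. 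If $X_\beta = I\times S_p$, writing $x_{\sigma^{-1}(\beta)} = a + (L+r_{\sigma^{-1}(\beta)})\,\theta_{\sigma^{-1}(\beta)}$ this is exactly $\theta_{\sigma^{-1}(\beta)}$, independent of $r_{\sigma^{-1}(\beta)}$; if $X_\beta = D_p(\theta')$, the ball is centred at $L\theta'$ with radius $\ll L$, so the limiting direction is $\theta'$. In either case $\phi_e$ factors through the projection forgetting the remaining coordinate, which is the claim.

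For the dashed case I would localise to the single crossing $p$ and pass to the model $B(v)$, $D$. The disk $D_p$ lies in the unperturbed sheet $\mathbb{R}^3\times\mathbf 0\times\mathbb{R}^{j-1}$, so $x_{\sigma^{-1}(\alpha)}$ has vanishing $\mathbb{R}^{n-j-2}$-component, whereas a band point at along-band position $r_{\sigma^{-1}(\beta)}\in I$ has $\mathbb{R}^{n-j-2}$-component $\gamma(r_{\sigma^{-1}(\beta)})\,v$; since $S_p$ is the cross-sectional sphere, varying $\theta_{\sigma^{-1}(\beta)}$ keeps the argument of $\gamma$, hence this perturbation, fixed. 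The remaining dependence on $\theta_{\sigma^{-1}(\beta)}$ enters only through the position of the band point on its cross-section, and as the radii shrink the annulus $I\times S_p$ collapses onto its core, whose location is a function of $r_{\sigma^{-1}(\beta)}$ alone. Hence in the limit the unit vector $\bigl(K(x_{\sigma^{-1}(\beta)})-K(x_{\sigma^{-1}(\alpha)})\bigr)/\|K(x_{\sigma^{-1}(\beta)})-K(x_{\sigma^{-1}(\alpha)})\|\in S^{n-1}$ depends only on $x_{\sigma^{-1}(\alpha)}$ and $r_{\sigma^{-1}(\beta)}$, so $\phi_e$ factors through the projection forgetting $\theta_{\sigma^{-1}(\beta)}$ and its pullback of $\omega_{S^{n-1}}$ carries no $d\theta_{\sigma^{-1}(\beta)}$.

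I expect the genuine obstacle to be this last step of the dashed case: justifying that the band collapses onto its core \emph{without} the direction to the disk point degenerating, i.e. that the cross-sectional thickness of $I\times S_p$ is negligible against the disk--band separation that $\gamma$ maintains at the crossing. This forces me to order carefully the three small parameters in play --- the orbit radii $L^l_i$, the perturbation/crossing scale, and the band thickness --- and to check that the error terms vanish in the iterated limit in which Lemmas \ref{Localizing lemma}--\ref{Ingoing lemma} are applied. The solid case, by contrast, is a clean single-scale estimate and should be routine.
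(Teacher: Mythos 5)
Your proposal is correct and takes the same route as the paper: the paper's entire proof of this lemma is the single sentence ``This follows from the way we choose the scaling of planetary orbits,'' and your argument is precisely an unpacking of what that scaling hierarchy buys, read (as the Remark after Lemma \ref{Same system lemma} licenses) as a limiting statement. Your explicit estimate in the solid case and your flagged caveat in the dashed case --- that the band's cross-sectional thickness must be ordered below the disk--band separation before the annulus may be replaced by its core --- supply strictly more detail than the paper itself records.
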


\begin{proof}
This follows from the way we choose the scaling of planetary orbits.
\end{proof}

\begin{definition}[Graph-chord pairing]
\label{Graph-chord pairing}
Let $C$ be a chord diagram on directed lines of order $k$.  Let $\Gamma$ be a labeled BCR graph of order $k$.
Then the pairing $<\Gamma, C>$ is defined by counting as follows. First, we only count when
\begin{itemize}
\item [(I)] $\#B(\Gamma) =  2k (= \# V(C))$.
\end{itemize}
Let $\#B(\Gamma) =  \#V(C) =2k$.  We count permutations $\sigma : \#B(\Gamma) \rightarrow  \#V(C)$ which satisfies (I) and all the followings.
\begin{itemize}
\item [(II)] $ \sigma$ induces the map  $ \overline{\sigma} : E(\Gamma) \rightarrow E(C)$.
\item [(III)] If two black vertices $v_1$ and $v_2$ are in the same solid component, $\sigma(v_1)$ and $\sigma(v_2)$ are on the same directed line.
\item [(IV)] If a vertex $w$ of $V(C)$ is not on the $x$-axis, $w$ has an ingoing solid edge: there exists a vertex $w^{\prime}$ lower on the same directed line such that $\sigma^{-1}(w)$ and $\sigma^{-1}(w^{\prime})$ are connected by some solid edge of $\Gamma$.
\end{itemize}
The sign of the counting is $+1$ if and only if the orientation of $\Gamma$ coincides with the induced orientation determined by the induced label from $\sigma$. (See Definition~\ref{Induced color on BCR diagrams}.)
\end{definition}

\begin{definition}[Induced label on BCR diagrams]
\label{Induced color on BCR diagrams}
Let $\sigma: B(\Gamma) \rightarrow V(C) $ satisfy all the conditions (I)(II)(III)(IV) of Definition \ref{Graph-chord pairing}.
Then $\sigma$ defines an induced label on the underlying graph $\overline{\Gamma}$ in a natural way:
we order black vertices using $\sigma$ and the ordering of $V(C)$. (See Definition \ref{Induced ordering}.)
We orient solid edges so that the orientations are compatible with directed lines.
We orient dashed edges so that the orientations are compatible with those of chords.
We order edges in the following order.
\begin{itemize}
\item [(1)] Ingoing solid edge (if it exists) from the initial vertex of the $i$th chord ($i = 1, \dots, k$). There are $(g-1)$ solid edges ordered by this first step.
\item [(2)] The $\overline{\sigma}^{-1}(i)$th dashed edge and the ingoing solid edge from the target point of the $i$th chord ($i = 1, \dots, k$).
\end{itemize}

\begin{rem}
We do not use the induced ordering of edges when $(n,j) = $(odd, odd).
\end{rem}

\begin{theorem}(Refer to \cite[Lemma 4.12]{SW})
\label{key lemma}
Let $C$ be a chord diagram of order $k$ associated with a generalized ribbon cycle $\psi$. Assume $r(c)$ chords have the negative sign.
Let $\Gamma$ be a BCR graph of order $k$. Then the value of the integral $I(\Gamma)$ on $\psi$ is equal to the pairing $(-1)^{r(c)} <\Gamma, C>$.
\end{theorem}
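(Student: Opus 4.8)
The plan is to reduce the analytic pairing $\langle[\psi],[I(\Gamma)]\rangle=\int_\psi\psi^\ast I(\Gamma)$ to the combinatorial count of Definition \ref{Graph-chord pairing} by localizing the configuration space integral to the crossings of $\psi$. Since $I(\Gamma)=\pi_\ast\omega(\Gamma)$ is a fiber integral, pulling back along $\psi$ and integrating over the parameter space $M=(S^{j-1})^{g-1}\times(S^{n-j-2})^{k}$ is the same as integrating $\omega(\Gamma)$ over the total space $E_\psi$ of the configuration-space bundle restricted to the family $\psi$. First I would invoke the Localizing lemma (Lemma \ref{Localizing lemma}): the integrand is supported on configurations for which every crossing slot---each ball $D_{p_i}$ (or $D_{p_i}(\theta'_{\tau^{-1}(2i-1)})$) and each annulus $I\times S_{p_i}$---carries at least one black vertex. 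Together with the defect-$0$ relation $\#B(\Gamma)=2k-\#W(\Gamma)$, this forces $\#B(\Gamma)=2k$ (otherwise some slot is necessarily empty and both sides of the asserted identity vanish), so that $\#W(\Gamma)=0$ and, since there are exactly $2k=\#V(C)$ slots, exactly one black vertex sits in each slot. Hence the integral decomposes as $\int_\psi I(\Gamma)=\sum_\sigma\int_{C_\sigma}\omega(\Gamma)$, where $\sigma$ runs over bijections $B(\Gamma)\to V(C)$ and $C_\sigma$ is the locus where $x_{\sigma^{-1}(\alpha)}\in X_\alpha$.

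Next I would determine which $\sigma$ contribute, matching the constraints (II)--(IV) of Definition \ref{Graph-chord pairing}. The Pairing lemma (Lemma \ref{pairing lemma}) annihilates every $\sigma$ for which the two black vertices assigned to a paired slot $(D_{p_i},I\times S_{p_i})$ are not joined by a dashed edge, forcing the induced edge map $\overline\sigma\colon E(\Gamma)\to E(C)$ to exist, i.e.\ condition (II). The Same system lemma (Lemma \ref{Same system lemma}) discards $\sigma$ with a solid edge between distinct planetary systems, giving (III), and the Ingoing lemma (Lemma \ref{Ingoing lemma}) discards $\sigma$ for which an orbit slot has no ingoing solid edge, giving (IV). For the surviving $\sigma$ I would evaluate $\int_{C_\sigma}\omega(\Gamma)$ using the Decomposing lemma (Lemma \ref{Decomposing lemma}): up to the fixed local geometry, the dashed propagator across the $i$th crossing depends only on the perturbation direction $y_i\in S^{n-j-2}$, while the ingoing solid propagator into an orbit-orbit slot depends only on the $S^{j-1}$-parameter $\theta'$. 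The dimension count---the form degree $(n-1)k+(j-1)(k+g-1)$ equals $\dim E_\psi$---shows that these are precisely the factors needed, so that $\omega(\Gamma)$ restricted to $C_\sigma$ factors as a product of degree-one pullbacks of the volume forms of the $k$ spheres $S^{n-j-2}$ and the $g-1$ spheres $S^{j-1}$; each integrates to $\pm1$, whence $\int_{C_\sigma}\omega(\Gamma)=\pm1$.

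Finally I would fix the sign. For a valid $\sigma$ the sign of $\int_{C_\sigma}\omega(\Gamma)$ is the comparison of the given orientation of $\Gamma$ with the orientation induced by the label that $\sigma$ determines (Definition \ref{Induced color on BCR diagrams}), which is exactly the sign attached to $\sigma$ in the graph-chord pairing. The global factor $(-1)^{r(c)}$ appears because each of the $r(c)$ negatively-signed chords corresponds to a crossing built with the opposite band/disk orientation, which reverses the orientation of the associated $S^{n-j-2}$-integration. Summing over valid $\sigma$ then yields $\int_\psi I(\Gamma)=(-1)^{r(c)}\langle\Gamma,C\rangle$. I expect the main obstacle to be the orientation bookkeeping of this last step: reconciling the orientation of $\Gamma$ as an element of the determinant line, the induced orientation from $\sigma$, the inward-normal boundary conventions inherited from the Stokes computations, and the crossing signs, so that they assemble cleanly into $(-1)^{r(c)}\langle\Gamma,C\rangle$ rather than merely up to an overall undetermined sign. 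A secondary subtlety will be making the Decomposing-lemma factorization fully rigorous, namely checking that each factor is a genuine degree-one map onto its sphere so that the local integral is exactly $\pm1$ and not just nonzero.
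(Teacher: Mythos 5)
Your proposal follows essentially the same route as the paper: localize via the Localizing, Pairing, Same-system and Ingoing lemmas to the loci $C_\sigma$ with $\sigma$ satisfying (I)--(IV) of Definition \ref{Graph-chord pairing}, then factor the integrand by the Decomposing lemma and evaluate each factor as $\pm 1$ with the induced-label sign. The only point you leave as a "secondary subtlety" that the paper settles explicitly is the degree-one claim for the crossing factors: the paper identifies $\int_{x\in D}\int_{r\in I}\omega_{p_i}$, combined with the $S^{n-j-2}$-family parameter $y_i$, as the linking number of the Hopf link $S^{j-1}\cup\Sigma S^{n-j-2}\hookrightarrow\mathbb{R}^n$, which is $+1$ or $-1$ according to the sign of the chord and is precisely the source of the factor $(-1)^{r(c)}$.
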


\begin{proof}
By (I) Localizing lemma \ref{Localizing lemma} (II) Pairing lemma \ref{pairing lemma}, (III) Same system lemma \ref{Same system lemma}, (IV) Ingoing lemma \ref{Ingoing lemma},
the integral survives only on $C_{\sigma}$ for $\sigma$ which satisfies (I) (II) (III) (IV) of Definition \ref{Graph-chord pairing}.
By Decomposing lemma \ref{Decomposing lemma} the integral on $C_{\sigma}$ can be written as 
\begin{equation*}
\pm (-1)^{r(c)} \left(\int_{\theta^{\prime}_j \in S}\bigwedge_{j=1}^{g-1} \eta_{{\tau(j)}} \right)\left(\bigwedge_i \int_{x_{\sigma^{-1}(2i-1)} \in D_{p_i}}\int_{r_{\sigma^{-1}(2i)} \in I}\omega_{p_i} \int_{\theta_{\sigma^{-1}(2i)} \in S_{p_i}} \eta_{p_i}\right).
\end{equation*}
Here $\omega_{p_i}$ is the form associated with the dashed edge $\{x_{\sigma^{-1}(2i-1)}, x_{\sigma^{-1}(2i)}\}$. $\eta_{p_i}$ is the form associated with the solid edge that goes down from $x_{\sigma^{-1}(2i)}$. $\eta_{{\tau(j)}}$ is the form associated with the solid edge that goes down from $x_{\tau(j)}$. The sign at the head is $+$ if and only if the orientation of the configuration space $C_\sigma$  coincides with the orientation induced by the coordinate $(x_{\sigma^{-1}(1)}, x_{\sigma^{-1}(2)}, \dots,  x_{\sigma^{-1}(2k)})$, and $\omega(\Gamma) $ is exactly $\bigwedge_{j=1}^{g-1} \eta_{\tau(j)} \wedge \bigwedge (\omega_{p_i}\eta_{p_i})$.

Recall that the image of the annulus $I \times S_{p_i}$ is perturbed to the direction $y_i \in S^{n-j-2}$.
Then, in the part  $ \int_{x \in D}  \int_{r  \in I} \theta$, the linking number of $D$ and $\cup_{y_i \in S^{n-j-2}} I(y_i)$ is computed. Since this link is the Hopf link $S^{j-1} \cup \Sigma S^{n-j-2} \hookrightarrow \mathbb{R}^n$, the linking number is $+1$ or $-1$ depending on the sign of the chord.
\end{proof}

\begin{notation}
Let $C = (\{t_i\}_{i = 1, \dots, s}, \{p_i\}_{i=1, \dots, k})$ be a chord diagram on directed lines. 
Let $G(C)$ be the set of BCR graphs (without a label) mapped on $C$ satisfying all the conditions of Definition \ref{Graph-chord pairing}. 
\end{notation}

\begin{lemma}
$G(C)$ consists of $\sum_{i=1}^s  2^{t_i -1}$ graphs. 
\end{lemma}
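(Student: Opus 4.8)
The plan is to exploit the rigidity the chord diagram imposes on the dashed edges, and then reduce everything to a line-by-line count of solid edges. Once a bijection $\sigma\colon B(\Gamma)\to V(C)$ satisfying conditions (I)--(IV) is fixed, condition (II) forces the dashed edges of $\Gamma$ to coincide with the $k$ chords of $C$: since $\Gamma$ is an all-black (this is exactly the situation the Localizing lemma reduces us to, and is also what makes $\bar\sigma$ defined on every edge) order-$k$ defect-$0$ graph, each black vertex carries exactly one dashed edge, so the $2k$ vertices are matched into $k$ dashed edges, which $\bar\sigma$ sends bijectively onto the $k$ chords. Hence an element of $G(C)$ is completely determined by its solid-edge set, and by condition (III) every solid edge joins two vertices on the same directed line. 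So the solid-edge set splits as a disjoint union over the $s$ lines, and I would count the admissible configurations one line at a time.

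On a single line $i$, with fixed star $(i,0)$ and orbits $(i,1),\dots,(i,t_i)$ ordered by height, I would combine condition (IV) with the defect-$0$ condition and with admissibility (Definition of admissible graphs: eta-valency at most two, no multiple edges, no solid small loop) to identify the admissible solid subgraphs. The numerical input is that the graphs relevant to the pairing have first Betti number $g=k-s+1$ (matching the degree of the $g$-loop cocycle), which forces the total number of solid edges to equal the number of orbits; combined with condition (IV) this makes \emph{each} orbit carry exactly one incoming solid edge from a strictly lower vertex. The resulting solid subgraph is therefore a spanning tree rooted at the fixed star in which every orbit has at most one child and the root has at most two children. I would then count such trees by the elementary remark that the innermost orbit $(i,1)$ is forced to have the fixed star as parent, while each later orbit either extends an existing chain or opens the single remaining branch at the root, giving exactly two admissible choices at every step. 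This yields $2^{t_i-1}$ configurations on line $i$.

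Finally I would assemble the per-line counts into the stated total $\sum_{i=1}^{s}2^{t_i-1}$ and check that inequivalent admissible configurations give distinct graphs of $G(C)$. The hard part will be precisely this assembly: the per-line choices interact through the no-multiple-edge clause of admissibility, which forbids a solid edge duplicating a chord that happens to lie inside a single line (an internal orbit--orbit pairing), and through the requirement that $\Gamma$ be connected with first Betti number $g$, which limits how the line-trees may be glued across chords. Carefully tracking which orbit--orbit chords are internal, discarding the configurations they exclude, and confirming that the surviving count is exactly $\sum_i 2^{t_i-1}$ is where the genuine bookkeeping lies. The complementary point that two admissible solid structures never yield isomorphic BCR graphs is easier, since the chord data already rigidify the vertex roles, so any isomorphism would have to respect the induced ordering of the vertices coming from the orientations and ordering of $C$.
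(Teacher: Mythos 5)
Your reduction is the same as the paper's: since the relevant graphs are all-black with defect $0$, every black vertex carries exactly one dashed edge, so the $k$ dashed edges are pinned bijectively to the chords by (II), and by (III) all solid edges run along single directed lines; the only remaining freedom is the solid structure on each line. The paper describes the admissible solid component on line $i$ as a path that descends from the top vertex through some subset of the $t_i-1$ intermediate orbits to the fixed star and climbs back through the rest, one binary choice per intermediate orbit, hence $2^{t_i-1}$. Your rooted-tree description (at most two height-increasing chains out of the fixed star) is the same object, and your derivation of it is actually more complete than the paper's one-sentence version: you use $g=k-s+1$ to get $\#E_\eta(\Gamma)=\sum_i t_i$, which upgrades condition (IV) from ``at least one'' to ``exactly one'' ingoing solid edge per orbit, and you invoke admissibility to cap the eta-valency at two. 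All of that is correct.

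The gap is the assembly step, which you explicitly leave open. The per-line choices you set up are independent, so your argument yields the product $\prod_{i=1}^{s}2^{t_i-1}$, not the stated sum $\sum_{i=1}^{s}2^{t_i-1}$. The two coincide for every diagram the paper actually uses ($s=1$, or $s=2$ with $t_1=t_2=2$, where $2\cdot 2=2+2=4$), which is why the computation of $G(C_2)$ and the final pairing are unaffected, but they differ in general (e.g.\ $s=2$, $(t_1,t_2)=(1,3)$ gives $4$ versus $5$), so a proof cannot terminate in the printed formula without comment. Your second worry, about an orbit--orbit chord internal to a single line forcing a solid edge to duplicate a dashed one, is also genuine: for $C_1$ the chord joining the first and third orbits excludes one of the four per-line trees if non-admissible graphs are excluded from $G(C)$, as the blanket convention at the start of the section suggests. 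It is vacuous for $C_2$, whose chords all cross between the two lines, and harmless in the counting formula since multiple-edge graphs have weight $0$ in any cocycle; but a literal proof of the lemma must either assume there are no internal chords or carry out the bookkeeping you postpone. The paper's own proof is silent on both points, so you have not fallen behind it --- you have simply made visible where it, too, stops short.
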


\begin{proof}
Since dashed edges must be mapped on chords, the remaining choices are solid edges.
The top vertex $v$ on each directed line must be one of the ends of some solid component. The solid component first goes down from $v$ to $O$ through some of the vertices on the directed line. After reaching $O$, the solid component goes up though the remaining vertices. We can choose which of $(t_i -1)$ vertices between $v$ and $O$ the solid component passes when it goes down. 
\end{proof}

\subsection{The counting formula of configuration space integrals}

\begin{theorem}[Counting formula]
\label{Counting formula}
Let $H = \sum w(\Gamma_i) \Gamma_i$ be a graph cocycle that consists of $g$-loop graphs of order $k$. 
Let $C$ be a chord diagram on directed lines of order $k$, with $r(C)$ chords of negative sign. Let $\psi$ be a generalized ribbon cycle constructed from $C$.
Then 
\begin{equation*}
 I(H)(\psi) = (-1)^{r(C)} \sum_{\overline{\Gamma} \in G(C)}  w(\Gamma) \#\text{Aut}(\overline{\Gamma})  s(\Gamma, \overline{\Gamma}).
\end{equation*}
Here $\Gamma$ is a labeled graph in $H$ such that the underlying graph of $\Gamma$ is $\overline{\Gamma}$.
The sign $s(\Gamma, \overline{\Gamma}) \in \{+1, -1\}$ is $1$ if and only if the orientation of $\Gamma$ coincides with that of $\overline{\Gamma}$ with the induced label.
\end{theorem}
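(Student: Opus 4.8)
The plan is to reduce the statement to the single-graph evaluation of Theorem \ref{key lemma} by linearity, and then to repackage the signed count defining $\langle\,\cdot\,,C\rangle$ into the automorphism factor $\#\mathrm{Aut}(\overline{\Gamma})$. First I would use linearity of the configuration space integral (fiber integration) to write $I(H)(\psi)=\sum_i w(\Gamma_i)\,I(\Gamma_i)(\psi)$, and then apply Theorem \ref{key lemma} to each labeled graph $\Gamma_i$ to get $I(\Gamma_i)(\psi)=(-1)^{r(C)}\langle\Gamma_i,C\rangle$. It then remains to prove the purely combinatorial identity
\[
\langle\Gamma,C\rangle=\#\mathrm{Aut}(\overline{\Gamma})\,s(\Gamma,\overline{\Gamma})
\]
for a single labeled graph $\Gamma$ whose underlying graph $\overline{\Gamma}$ lies in $G(C)$, together with the vanishing $\langle\Gamma,C\rangle=0$ when $\overline{\Gamma}\notin G(C)$. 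Re-indexing the sum over $i$ by the underlying graph, and noting that graphs occurring in $H$ but not in $G(C)$ drop out (zero pairing) while graphs in $G(C)$ absent from $H$ carry $w(\Gamma)=0$, then produces exactly the displayed sum over $G(C)$.

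The key step is to analyze the set of permutations $\sigma\colon B(\Gamma)\to V(C)$ satisfying conditions (I)–(IV) of Definition \ref{Graph-chord pairing}, for a fixed underlying graph $\overline{\Gamma}$. I would show this set is a torsor under the action $\alpha\mapsto\sigma\circ\alpha$ of $\mathrm{Aut}(\overline{\Gamma})$. Freeness is immediate. For transitivity, given two admissible $\sigma,\sigma'$ the composite $\sigma^{-1}\sigma'$ permutes $B(\Gamma)$; it carries each dashed edge to a dashed edge because, by condition (II), both $\sigma$ and $\sigma'$ send the dashed edges of $\Gamma$ bijectively onto the chords of $C$, and it carries each solid edge to a solid edge because both realize the same fixed solid structure of $\overline{\Gamma}$ along the directed lines dictated by (III), (IV) and the reconstruction of Definition \ref{Induced color on BCR diagrams}. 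Hence $\sigma^{-1}\sigma'\in\mathrm{Aut}(\overline{\Gamma})$, so when $\overline{\Gamma}\in G(C)$ there are exactly $\#\mathrm{Aut}(\overline{\Gamma})$ admissible permutations, and none otherwise.

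It then remains to see that every admissible $\sigma$ contributes the same sign. Two admissible permutations differ by $\sigma'=\sigma\circ\alpha$ with $\alpha\in\mathrm{Aut}(\overline{\Gamma})$, and the orientations induced by $\sigma$ and $\sigma'$ (in the sense of Definition \ref{Induced color on BCR diagrams}) differ precisely by the sign with which $\alpha$ acts on $\det\bigl(\mathbb{R}V(\Gamma)\oplus\bigoplus_{e}\mathbb{R}H(e)\bigr)$. Since $\overline{\Gamma}$ occurs in the graph cocycle $H$ with nonzero coefficient, it is a nonzero class in $\mathcal{D}^{k,0}_g$, so by the first relation of Definition \ref{admissible graphs} it admits no odd automorphism; every $\alpha$ therefore acts trivially on the orientation, all admissible $\sigma$ give the common sign $s(\Gamma,\overline{\Gamma})$, and the displayed identity follows. (The product $w(\Gamma)\,s(\Gamma,\overline{\Gamma})$ is independent of which labeled representative of $\overline{\Gamma}$ is chosen, since changing the label multiplies both factors by the same sign.)

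The main obstacle is the orientation bookkeeping of the last paragraph: one must confirm that the induced orientation of Definition \ref{Induced color on BCR diagrams} transforms under relabeling by $\alpha$ exactly through the determinant sign of $\alpha$, so that the $(\mathrm{odd},\mathrm{odd})$ vanishing of graphs with odd automorphisms can be invoked to make the per-$\sigma$ signs constant; checking that $\sigma^{-1}\sigma'$ preserves the solid-edge structure (not only the dashed one) also requires care, as the solid structure is encoded implicitly through the directed-line conventions rather than through the chord correspondence. The torsor count itself is then routine.
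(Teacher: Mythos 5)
Your overall strategy --- linearity, then Theorem \ref{key lemma} applied to each labeled graph, then a combinatorial regrouping of the signed count $\langle\Gamma,C\rangle$ using automorphisms and the evenness of automorphisms of graphs that survive in the graph complex --- is the same as the paper's (whose proof is exactly ``Theorem \ref{key lemma} plus Symmetry lemma''). However, your central combinatorial step is wrong as stated. The set of admissible permutations $\sigma\colon B(\Gamma)\to V(C)$ for a fixed $\Gamma$ is \emph{not} a torsor under $\mathrm{Aut}(\overline{\Gamma})$: conditions (III) and (IV) do not dictate a unique solid structure on each directed line. A solid component must descend from the top vertex of a line to the origin through some subset of the intermediate vertices and return through the complement, giving $2^{t_i-1}$ distinct layouts per line --- this is precisely the content of the paper's lemma computing $\#G(C)=\sum_i 2^{t_i-1}$. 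Two admissible $\sigma,\sigma'$ realizing different layouts compose to a vertex permutation $\sigma^{-1}\sigma'$ that sends some solid edges of $\Gamma$ to non-edges, so your transitivity argument fails exactly at the solid edges.

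Consequently the identity $\langle\Gamma,C\rangle=\#\mathrm{Aut}(\overline{\Gamma})\,s(\Gamma,\overline{\Gamma})$ is false in general: the admissible $\sigma$ decompose into one $\mathrm{Aut}(\overline{\Gamma})$-orbit for \emph{each} element of $G(C)$ isomorphic to $\overline{\Gamma}$ (recall that $G(C)$ is a set of realizations on $V(C)$, not of isomorphism classes), and the signs attached to distinct orbits need not agree. In Example \ref{Example of counting formula} the graph \graphe{} occurs three times in $G(C_2)$ with signs $+1,+1,-1$, so its total contribution to the pairing is $2\,(1+1-1)$, which is not of the form $\pm\#\mathrm{Aut}(\overline{\Gamma})=\pm 2$ once one perturbs the sign pattern (and can even vanish); moreover your re-indexed sum would run over isomorphism classes rather than over $G(C)$ as in the statement. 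The repair is to group the admissible $\sigma$ by their image $\overline{\Gamma}'\in G(C)$; within each such orbit your evenness-of-automorphisms argument does show the sign is constant and equal to $s(\Gamma,\overline{\Gamma}')$, and summing over all realizations $\overline{\Gamma}'\in G(C)$ then recovers the stated formula.
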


\begin{proof}
This follows from Theorem \ref{key lemma} and Symmetry lemma \ref{Symmetry lemma}.
\end{proof}

\begin{example}
\label{Example of counting formula}
For the chord diagram $C_2= \chorddiagramb{}$, $G(C_2)$ consists of the four graphs drawn in red in Figure \ref{graphchordpairing2}.
The first BCR graph (a) is \graphd{}. The other three graphs (b) (c) (d) are \graphe{}. The induced label is written in black. The sign $s(\Gamma, \overline{\Gamma})$ is
(a) $-1$ (b) $+1$ (c) $+1$ (d) $-1$ respectively.

\begin{figure}[htpb]

\labellist
\small \hair 10pt

\pinlabel $(a)$ at 330 230
\pinlabel $(b)$ at 1000 230
\pinlabel $(c)$ at 1650 230
\pinlabel $(d)$ at 2300 230

\endlabellist

\begin{center}
\includegraphics[width = 15cm]{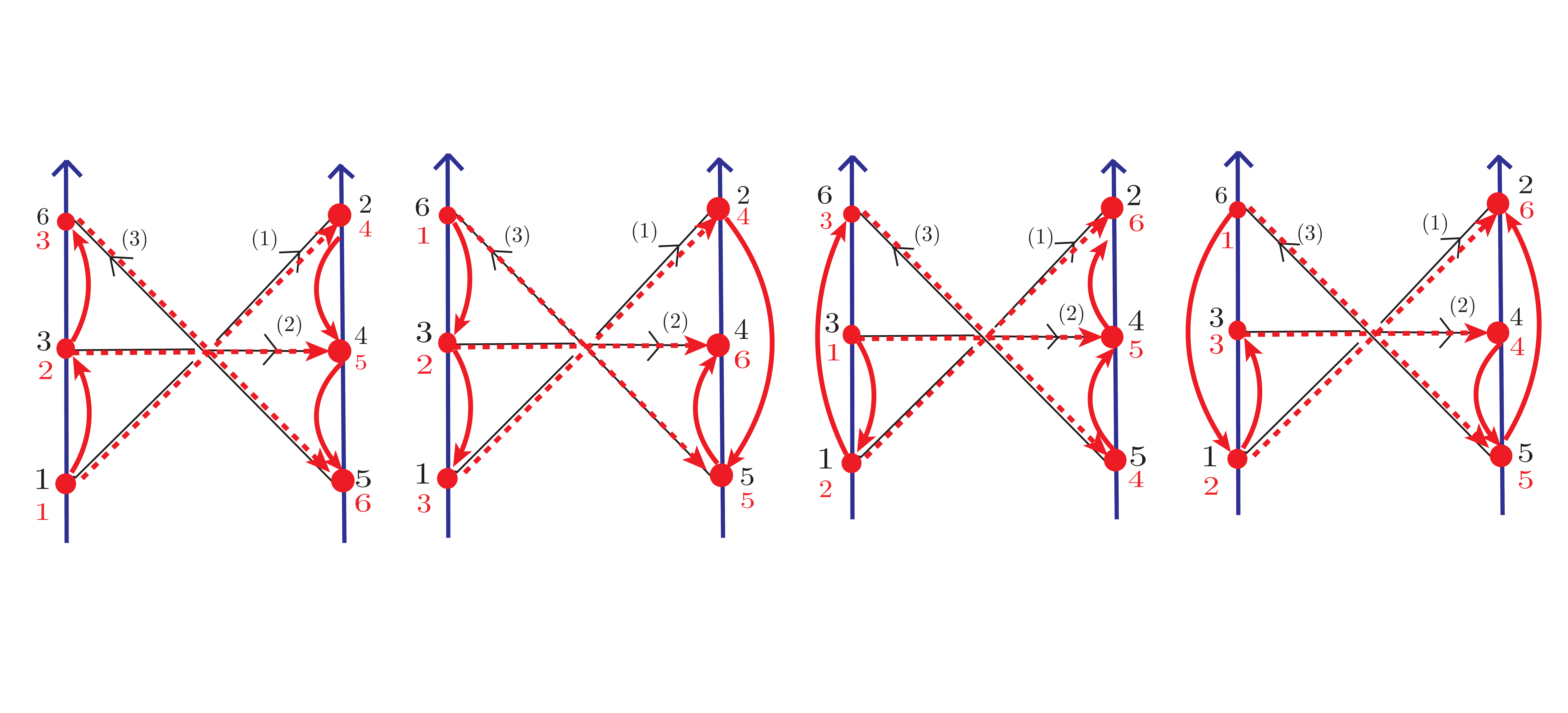}
\caption{BCR graphs of $G(C_2)$}
\label{graphchordpairing2}
\end{center}
\end{figure}
\end{example}
\end{definition}

\subsection{Cancellation of correction terms}

Finally, we see lemmas for canceling the contribution of the collection term $\overline{c}(H)$.

\begin{lemma}\cite [Lemma 4.13] {SW}
\label{psi0}
Let $\overline{\psi}: X \rightarrow \overline{\text{Emb}}(\mathbb{R}^j, \mathbb{R}^n)$ be a generalized ribbon cycle of order $k$, and let $p_1, p_2, \dots, p_k$ be its crossings.
Then there exists a cycle $\overline{\psi}_0: X \rightarrow \overline{\text{Emb}}(\mathbb{R}^j, \mathbb{R}^n)$ satisfying the following.
\begin{itemize}
\item $\overline{\psi}_0$ is obtained from $\overline{\psi}$ by resolving all crossings.
\item The two differentials $D\overline{\psi}_0, D\overline{\psi}$ : $[0,1] \times C_1(\mathbb{R}^j )\times X \rightarrow \text{Inj}(\mathbb{R}^j, \mathbb{R}^n)$ is homotopic through an isotopy of $C_1(\mathbb{R}^j$) that has a support in small neighborhoods of balls of crossings.
\end{itemize}
\end{lemma}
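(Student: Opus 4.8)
The plan is to construct $\overline{\psi}_0$ by inserting the crossing resolutions directly into the construction of $\overline{\psi}$, and then to produce the homotopy of differentials as the effect of sliding the single configuration point of $C_1(\mathbb{R}^j)$ across each crossing ball. The guiding principle is that the differential map records only the linear tangent map $D_x$ of an immersion, and $D_x$ is invariant under translations of the target $\mathbb{R}^n$. Since resolving a ribbon crossing amounts, inside each crossing ball, to translating the disk clear of the band while keeping all pieces flat, the tangent-plane data is merely \emph{redistributed} over the domain rather than genuinely altered; the homotopy through an isotopy of $C_1(\mathbb{R}^j)$ will record exactly this redistribution.

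First I would define $\overline{\psi}_0$ by applying the move $(m1)$ of subsection \ref{The path to the trivial immersion} simultaneously and continuously to every crossing $p_1, \dots, p_k$, both to the terminal embedding and along the whole lifting path $t \mapsto \overline{\psi}(\xi)_t$. Because each $(m1)$ is carried out inside the fixed crossing neighborhood $\hat{U}_i$ in a manner independent of the family parameter $\xi \in X$, this yields a well-defined family $\overline{\psi}_0 : X \rightarrow \overline{\text{Emb}}(\mathbb{R}^j, \mathbb{R}^n)$; as $X$ is closed it is again a cycle, and it coincides with $\overline{\psi}$, as a parametrized immersion, on the complement of $\bigcup_i \hat{U}_i$.

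Next I would localize and build the isotopy. Outside $\bigcup_i \hat{U}_i$ the two families agree, so $D\overline{\psi}_0$ and $D\overline{\psi}$ already coincide there and the homotopy may be taken stationary, which gives the stated support condition. Inside a single crossing ball the immersion is the standard model of a flat $j$-disk $\hat{D}_i$ meeting the flat band/annulus $\hat{B}_i(v_i)$, and $(m1)$ slides $\hat{D}_i$ rigidly in the $x_1$-direction. Consequently the resolved immersion carries, at the parameter point $x$, exactly the tangent plane carried by the original immersion at a displaced point $\Phi_1(x)$, where $\Phi_s$ (with $s \in [0,1]$ and $\Phi_0 = \mathrm{id}$) is an isotopy of $\mathbb{R}^j$ supported in the preimage of $\hat{U}_i$ that encodes the matching of the two tangent-plane fields. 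Translation-invariance of $D_x$ then gives $D\overline{\psi}_0(t,x,\xi) = D\overline{\psi}(t, \Phi_1(x), \xi)$, and $s \mapsto D\overline{\psi}(t, \Phi_s(x), \xi)$ is the desired homotopy through an isotopy of $C_1(\mathbb{R}^j)$.

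The last step is to assemble the local isotopies over all crossings into one isotopy of $C_1(\mathbb{R}^j)$ and to verify its continuity over the full parameter space $[0,1] \times X$. The delicate point, which I expect to be the main obstacle, is precisely this uniformity: one must check that the tangent-plane correspondence $\Phi_s$ persists across the special parameter values of the cycle construction (subsection~\ref{Construction of the cycles sub}), where the branch core $c(\theta)$ is pushed into the $x_{j+2}$-direction near the antipode, and along the entire path $t \mapsto \overline{\psi}(\xi)_t$, where each crossing is in various partially resolved states. Since the moves $(m1)$, $(m3)$, $(m2)$ all have support inside the crossing balls, the supports of the assembled isotopy remain in small neighborhoods of the balls of crossings throughout, which simultaneously secures continuity and delivers the stated support, completing the argument.
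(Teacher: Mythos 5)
First, a caveat: the paper gives no proof of Lemma \ref{psi0} at all --- it is imported verbatim from \cite[Lemma 4.13]{SW} --- so there is no in-paper argument to compare your proposal against. Judged on its own terms, your overall strategy is surely the intended one: define $\overline{\psi}_0$ by performing the resolution move $(m1)$ at every crossing, uniformly in the family parameter $\xi\in X$ and along the whole lifting path $t\mapsto \overline{\psi}(\xi)_t$; observe that the two families literally coincide outside $\bigcup_i\hat U_i$, so the homotopy can be taken stationary there; and account for the remaining discrepancy of differentials by an isotopy of $C_1(\mathbb{R}^j)$ supported near the balls of the crossings. The first and second of these steps are fine as you state them.

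The gap is in the third step, which is where the whole content of the lemma sits. You derive the key identity $D\overline{\psi}_0(t,x,\xi)=D\overline{\psi}(t,\Phi_1(x),\xi)$ from ``translation-invariance of $D_x$'', but that invariance proves something else: on the portion of the domain whose image is rigidly translated by $(m1)$ (the core of the ball $\hat D_i$), it yields $D_x\overline{\psi}_0=D_x\overline{\psi}$ with no displacement of $x$ needed at all. The genuine difference between the two differentials is concentrated precisely where the translation is interpolated --- the transition region where the tube joining the displaced disk to the rest of the presentation is stretched --- and there $D_x\overline{\psi}_0$ is a rescaled and sheared version of the original tangent map which need not coincide with $D_y\overline{\psi}$ for \emph{any} $y$ in the domain. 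So the displaced-point formula is a nontrivial claim that must be arranged by an explicit construction in the local model of a crossing (for instance by building the resolution as a domain reparametrization combined with a target translation, so that the two tangent-plane fields match up point by point), and your proposal does not supply it; the delicate point is this local matching, not the assembly and continuity issue you flag at the end. Note that for the only application, Lemma \ref{Cancellation of the correction term}, all that is used is that $D\overline{\psi}_0$ and $D\overline{\psi}$ agree outside small neighborhoods of the balls and are independent of $X$ inside them, so a correctly stated weak form of your localization argument would suffice --- but as written the central identity is asserted rather than proved.
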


\begin{lemma}
\label{Cancellation of the correction term}
The cycle $\overline{\psi}_0$ satisfies the following.
\begin{itemize}
\item [(1)] $\overline{c}(H)(\overline{\psi}- \overline{\psi}_0)= 0$
\item [(2)] $I(H)(r_{\ast} \overline{\psi}_0) = 0$, where $r: \overline{\text{Emb}}(\mathbb{R}^j, \mathbb{R}^n) \rightarrow \text{Emb}(\mathbb{R}^j, \mathbb{R}^n)$ is the natural projection.
\item [(3)] $\overline{\psi}_0$ is a null-homotopic cycle.
\end{itemize}
\end{lemma}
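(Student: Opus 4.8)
\emph{Part (1).} The three assertions are handled in turn, with Lemma~\ref{psi0} supplying the geometric input. For (1), the key point is that the correction term $\overline{c}(\Gamma)=(-1)^{r}(-1)^{d+1}\pi_{\ast}{p_{23}}_{\ast}\overline{D}^{\ast}b_{\ast}\omega(\Gamma)$ of Definition~\ref{definition of the correction term} depends on a family only through its differential $\overline{D}$. Thus both $\overline{c}(H)(\overline{\psi})$ and $\overline{c}(H)(\overline{\psi}_{0})$ are obtained by fiber-integrating, over $[0,1]\times C_{1}(\mathbb{R}^{j})$ and then over $X$, the pullback along the respective differentials $D\overline{\psi}$ and $D\overline{\psi}_{0}$ of the single form $b_{\ast}\omega(H)$ on $\mathrm{Inj}(\mathbb{R}^{j},\mathbb{R}^{n})$, which is closed by Proposition~\ref{correction term1}. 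By the second bullet of Lemma~\ref{psi0}, $D\overline{\psi}$ and $D\overline{\psi}_{0}$ are homotopic through an isotopy of the $C_{1}(\mathbb{R}^{j})$-factor supported in small neighborhoods of the balls of crossings. The plan is to run the homotopy-invariance argument: introduce the homotopy parameter $u$, apply Stokes' theorem to the closed form $b_{\ast}\omega(H)$ on the cylinder $[0,1]_{u}\times[0,1]_{t}\times C_{1}(\mathbb{R}^{j})$, and check that each boundary face contributes zero. On the face $t=0$ one has $\overline{K}_{0}=\iota$, giving the standard differential for every $u$, and on the face at infinity of $C_{1}(\mathbb{R}^{j})$ every member of the family is standard; both integrands are $u$-independent, so their fiber integral over $[0,1]_{u}$ vanishes, while the $X$-exact term integrates to zero over the closed cycle $X$. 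This gives $\overline{c}(H)(\overline{\psi})=\overline{c}(H)(\overline{\psi}_{0})$, which is (1).

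\emph{Part (2).} Resolving all crossings leaves an unknotted, unlinked system of disks and bands whose boundary is the trivial long embedding $\iota$, and this embedding no longer depends on any parameter $\theta'_{i}\in S^{j-1}$ or $y_{j}\in S^{n-j-2}$, since these only controlled the now-resolved crossings. Hence $r\circ\overline{\psi}_{0}\colon X\to\mathrm{Emb}(\mathbb{R}^{j},\mathbb{R}^{n})$ is the constant map at $\iota$. As $I(H)$ is a form of positive degree $3n-2j-7=\dim X$, its pullback by a constant map vanishes, so $I(H)(r_{\ast}\overline{\psi}_{0})=\int_{X}(r\circ\overline{\psi}_{0})^{\ast}I(H)=0$. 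This argument needs nothing about the (non-)closedness of $I(H)$ on $\mathrm{Emb}(\mathbb{R}^{j},\mathbb{R}^{n})$, which is exactly why it applies here.

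\emph{Part (3).} Here I would produce an explicit null-homotopy. Since every crossing of $\overline{\psi}_{0}$ is resolved, the parametrized family carries no essential linking, and the lift of subsection~\ref{Lifting the cycle} — the moves $(m1)$, $(m3)$, $(m2)$ leading to the trivial immersion — can be carried out continuously and simultaneously in all parameters. Shrinking the planetary orbits together with the perturbation directions to the basepoint then deforms $\overline{\psi}_{0}$ within $\overline{\mathrm{Emb}}(\mathbb{R}^{j},\mathbb{R}^{n})$ to the constant family at $\iota$, yielding a map $X\times[0,1]\to\overline{\mathrm{Emb}}(\mathbb{R}^{j},\mathbb{R}^{n})$ that witnesses null-homotopy. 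Equivalently, since $r\circ\overline{\psi}_{0}$ is constant, $\overline{\psi}_{0}$ factors through the fiber $\Omega_{\iota}\mathrm{Imm}(\mathbb{R}^{j},\mathbb{R}^{n})$ of $r$, and the resolving path contracts the resulting family of loops.

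The step I expect to be the main obstacle is the boundary bookkeeping in (1). The one potentially nonzero face is $t=1$, where the differentials of the genuinely distinct embeddings $r\circ\overline{\psi}$ and $r\circ\overline{\psi}_{0}$ are compared. Here I would exploit that the homotopy is realized by a $C_{1}(\mathbb{R}^{j})$-isotopy supported in small balls, so that on this face the contribution localizes to those neighborhoods; a change of variables in the fiber integral combined with a degree count on the small supporting regions should force it to vanish. Carrying out this localization rigorously inside the compactified configuration bundle is the delicate point.
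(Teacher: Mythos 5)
There are two genuine gaps. First, in part (1) your Stokes set-up is workable in principle, but you stop exactly where the proof actually happens. After localizing to $[0,1]\times N\times X$, where $N$ is the union of the small neighborhoods of the balls of crossings, a bare degree count does not finish the job: that domain has dimension $1+j+\dim X$, which is exactly the degree $d$ of $\overline{D}^{\ast}b_{\ast}\omega(H)$, so nothing vanishes for dimensional reasons alone. The missing ingredient --- and the decisive observation in the paper's proof --- is that on these neighborhoods neither $D\overline{\psi}$ nor $D\overline{\psi}_0$ depends on the parameter space $X$, so the restricted integrand is pulled back from the $(1+j)$-dimensional space $[0,1]\times N$ and therefore vanishes in degree $d>1+j$. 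With that observation in hand one does not even need Stokes or the auxiliary homotopy parameter $u$: since the isotopy of Lemma \ref{psi0} is supported in $N$, the two integrands literally agree outside $[0,1]\times N\times X$, so the difference $\overline{c}(H)(\overline{\psi})-\overline{c}(H)(\overline{\psi}_0)$ is an integral over $[0,1]\times N\times X$ of forms that are each killed by the $X$-independence. This is the paper's (much shorter) argument, and it is the step you flag as ``the main obstacle'' without supplying.

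Second, part (2) rests on the claim that $r\circ\overline{\psi}_0$ is the constant map at $\iota$, which Lemma \ref{psi0} does not give you and which is not true of the construction: resolving a crossing by the move $(m1)$ only pulls the disk $D_i$ away from the band, while the band remains perturbed in the direction of its $S^{n-j-2}$-parameter (indeed the isotopy in the second bullet of Lemma \ref{psi0} is supported only near the balls, so the annuli are untouched), and the underlying embeddings therefore still vary over $X$. The correct reason for $I(H)(r_{\ast}\overline{\psi}_0)=0$ --- and what ``due to the first property of Lemma \ref{psi0}'' is encoding --- is that the configuration space integral localizes to the crossings by Lemmas \ref{Localizing lemma} and \ref{pairing lemma} and there computes the linking number of each disk with the corresponding family of annuli, which is $0$ once the crossing is resolved; note that null-homotopy of the family alone would not suffice, precisely because $I(H)$ is not closed on $\text{Emb}(\mathbb{R}^j,\mathbb{R}^n)$. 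Your sketch of (3) via the moves $(m1)$, $(m3)$, $(m2)$ is acceptable, but the alternative justification ``since $r\circ\overline{\psi}_0$ is constant'' inherits the same false premise.
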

\begin{proof}
(1) Recall that $\overline{c}(H)$ depends only on the differential $\overline{D}: [0,1] \times C_1(\mathbb{R}^j) \times \overline{\text{Emb}}(\mathbb{R}^j, \mathbb{R}^n) \longrightarrow \text{Inj}(\mathbb{R}^j, \mathbb{R}^n)$. $\overline{c}(H)$ is defined as the fiber integral of the pullback of some closed form $\omega$ on $\text{Inj}(\mathbb{R}^j, \mathbb{R}^n)$.  By the second property of Proposition \ref{psi0}, the two compositions $D \overline{\psi}_0 =  \overline{D} \circ (id_{[0,1]} \times id_{C_1(\mathbb{R}^j)}\times \overline{\psi}_0)$ and $ D \overline{\psi} = \overline{D} \circ (id_{[0,1]} \times id_{C_1(\mathbb{R}^j)} \times \overline{\psi})$ differ only in small neighborhoods of balls of crossings. Moreover, observe that neither $D \overline{\psi}_0$ nor $D\overline{\psi}$ depend on the parameter space $X$, in these small neighborhoods of balls. Hence by a dimensional reason, the integral vanishes on these neighborhoods. So we have (1).
(2) and (3) are due to the first property of Lemma \ref{psi0}.
\end{proof}

\begin{cor}
\label{Cor of cancellation of the correction term}
$(r^{\ast}I(H)+\overline{c}(H))(\overline{\psi}) =  I(H)(\psi)$. 
\end{cor}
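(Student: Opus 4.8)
The plan is to expand the pairing on the left-hand side into its two summands, reduce the term coming from $r^{\ast}I(H)$ by the pullback--pushforward adjunction, and then show that the correction term $\overline{c}(H)$ contributes nothing when paired with $\overline{\psi}$. First I would write
\[
(r^{\ast}I(H)+\overline{c}(H))(\overline{\psi}) = (r^{\ast}I(H))(\overline{\psi}) + \overline{c}(H)(\overline{\psi}).
\]
For the first summand, since $\overline{\psi}$ is by construction a lift of $\psi$ along $r$ (so that $r_{\ast}\overline{\psi} = \psi$), the adjunction gives $(r^{\ast}I(H))(\overline{\psi}) = I(H)(r_{\ast}\overline{\psi}) = I(H)(\psi)$. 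Thus the entire corollary reduces to establishing the single vanishing statement $\overline{c}(H)(\overline{\psi}) = 0$.

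To prove that vanishing I would bring in the resolved cycle $\overline{\psi}_0$ of Lemma \ref{psi0}. By part (1) of Lemma \ref{Cancellation of the correction term} we have $\overline{c}(H)(\overline{\psi}-\overline{\psi}_0)=0$, so it suffices to show $\overline{c}(H)(\overline{\psi}_0)=0$. The key point is that although $\overline{c}(H)$ is not closed on its own, the full cocycle $\overline{z}^3_2 = r^{\ast}I(H)+\overline{c}(H)$ \emph{is} closed, as established in Section \ref{Construction of the cocycles}. Rewriting accordingly,
\[
\overline{c}(H)(\overline{\psi}_0) = \overline{z}^3_2(\overline{\psi}_0) - (r^{\ast}I(H))(\overline{\psi}_0),
\]
I would argue that the first term vanishes because $\overline{z}^3_2$ is a closed form and $\overline{\psi}_0$ is null-homotopic by part (3) of Lemma \ref{Cancellation of the correction term}, hence a boundary, which therefore pairs trivially with any cocycle; and the second term vanishes by adjunction together with part (2), namely $(r^{\ast}I(H))(\overline{\psi}_0) = I(H)(r_{\ast}\overline{\psi}_0) = 0$.

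Combining these gives $\overline{c}(H)(\overline{\psi}) = \overline{c}(H)(\overline{\psi}_0) = 0$, and therefore $(r^{\ast}I(H)+\overline{c}(H))(\overline{\psi}) = I(H)(\psi)$, as claimed. The substance of the argument is already packaged into Lemmas \ref{psi0} and \ref{Cancellation of the correction term} and into the closedness of $\overline{z}^3_2$, so no step is genuinely an obstacle. The only point that requires care is the bookkeeping around the fact that $\overline{c}(H)$ itself is not closed: the null-homotopy of $\overline{\psi}_0$ must be applied to the complete closed cocycle $\overline{z}^3_2$, and only after subtracting off the $r^{\ast}I(H)$ piece (which is handled separately by part (2)) does one recover the desired vanishing of $\overline{c}(H)$ on $\overline{\psi}_0$.
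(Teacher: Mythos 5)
Your argument is correct and uses exactly the same ingredients as the paper's proof — parts (1), (2), (3) of Lemma \ref{Cancellation of the correction term} together with the closedness of $\overline{z}^3_2$ and the adjunction $r^{\ast}$/$r_{\ast}$ — merely rearranged algebraically (you split off $\overline{c}(H)(\overline{\psi})$ and show it vanishes, whereas the paper subtracts $\overline{\psi}_0$ from the whole pairing first). Your remark that the null-homotopy of $\overline{\psi}_0$ must be applied to the full closed form $\overline{z}^3_2$ rather than to $\overline{c}(H)$ alone is exactly the right point of care, and is implicit in the paper's first step.
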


\begin{proof}
By  Lemma  \ref{Cancellation of the correction term} (1)(2)(3),  we have
\begin{align*}
&(r^{\ast}I(H)+\overline{c}(H))(\overline{\psi}) \\
\underset{(3)}{=}  &(r^{\ast}I(H)+\overline{c}(H))(\overline{\psi}- \overline{\psi}_0) \\
\underset{(1)}{=}  &(r^{\ast}I(H))(\overline{\psi}- \overline{\psi}_0) \\
 \underset{(2)}{=}& I(H)(\psi).
 \end{align*}
 \end{proof}

\begin{rem}
In particular, we have $\overline{c}(H)(\overline{\psi}) = 0$. We can show this directly without using $\overline{\psi}_0$, if we make the argument of the proof of (1) more precise: the parameter that runs on $S^{n-j-2}$ only affects a small neighborhood of the annulus of exactly one crossing. So if the cycle $\overline{\psi}$ has at least two crossings, the integral $\overline{c}(H)(\overline{\psi})$ vanishes by a dimensional reason.
\end{rem}


\section{Proof of Main Result}
\label{Proof of Main Result}
Finally, we perform the pairing between the cocycle given in Section \ref{Construction of  the cocycles} and the cycle given in Section \ref{Construction of the cycles}, and prove Main Result \ref{main result 2} and Corollary \ref{main corollary}. The following completes the proof of Theorem \ref{main result 2}.

\begin{theorem}
\label{pairing 2}
\begin{align*}
\overline{z}^3_2(\overline{\psi})& =1.\\
\end{align*}
\end{theorem}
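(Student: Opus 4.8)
The plan is to reduce the geometric pairing to a finite combinatorial count and then evaluate that count. The first step is to invoke Corollary~\ref{Cor of cancellation of the correction term}, which says precisely that against this cycle the correction term contributes nothing and that
\[
\overline{z}^3_2(\overline{\psi}) = \bigl(r^{\ast}I(H)+\overline{c}(H)\bigr)(\overline{\psi}) = I(H)(\psi).
\]
So it suffices to evaluate the configuration space integral $I(H)$ on the generalized ribbon cycle $\psi$ attached to the chord diagram $C_2 = \chorddiagramb{}$ (whose ribbon presentation is $P_2$), all of the analytic input having already been packaged into the lemmas of Section~\ref{Some lemmas for computing cocycle-cycle pairing}.

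Next I would apply the Counting formula (Theorem~\ref{Counting formula}) with $g=2$, $k=3$:
\[
I(H)(\psi) = (-1)^{r(C_2)} \sum_{\overline{\Gamma} \in G(C_2)} w(\Gamma)\,\#\mathrm{Aut}(\overline{\Gamma})\, s(\Gamma, \overline{\Gamma}).
\]
The number $r(C_2)$ of negatively signed chords is read off from the sign assignment fixed in Section~\ref{Ribbon presentations with more than one node}, giving $r(C_2)=1$. The set $G(C_2)$ has already been determined in Example~\ref{Example of counting formula}: it has the four members (a), (b), (c), (d), where (a) is the ladder graph \graphd{} and (b), (c), (d) are the crossed-ladder graph \graphe{}, with signs $s(\Gamma,\overline{\Gamma})$ equal to $-1,+1,+1,-1$ respectively.

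It then remains to read off, from the cocycle $H$ in Figure~\ref{2-loop graph cocycle of order 3}, the weights $w(\Gamma)$ of the labeled representatives whose underlying graphs are \graphd{} and \graphe{}, together with the automorphism numbers $\#\mathrm{Aut}(\overline{\Gamma})$. Writing $w_a,\#\mathrm{Aut}_a$ for the ladder and $w_b,\#\mathrm{Aut}_b$ for the crossed-ladder (the latter shared by (b), (c), (d) since they have isomorphic underlying graphs), and using $s_b+s_c+s_d = 1$, the sum collapses to
\[
I(H)(\psi) = (-1)^{1}\bigl[\, -\,w_a\,\#\mathrm{Aut}_a + w_b\,\#\mathrm{Aut}_b\,\bigr] = w_a\,\#\mathrm{Aut}_a - w_b\,\#\mathrm{Aut}_b,
\]
and I would verify that the weights that make $H$ a cocycle produce exactly $1$.

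The main obstacle I expect is the orientation bookkeeping rather than any analysis: one must correctly match each unlabeled graph of $G(C_2)$ with a labeled $\Gamma_i$ in $H$, propagate the induced label of Definition~\ref{Induced color on BCR diagrams} through the fixed orientation convention, and confirm the four recorded signs $s(\Gamma,\overline{\Gamma})$. Coupled with pinning down the automorphism factors $\#\mathrm{Aut}$ and checking that the cocycle weights conspire with $(-1)^{r(C_2)}$ to give $1$ rather than $0$ or $-1$, this sign-and-weight accounting is the delicate part; once it is settled the identity $\overline{z}^3_2(\overline{\psi})=1$ follows, and hence the non-triviality asserted in Theorem~\ref{main result 2}.
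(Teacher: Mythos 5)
Your proposal is correct and follows the paper's proof essentially verbatim: reduce to $I(H)(\psi)$ via Corollary~\ref{Cor of cancellation of the correction term}, then apply the Counting formula~\ref{Counting formula} with the four graphs and signs $(-1,+1,+1,-1)$ from Example~\ref{Example of counting formula}. The only step you leave unexecuted is reading off the explicit values $w_a=-\tfrac14$, $\#\mathrm{Aut}_a=4$, $w_b=-1$, $\#\mathrm{Aut}_b=2$ from Figure~\ref{2-loop graph cocycle of order 3}, which plugged into your formula $w_a\,\#\mathrm{Aut}_a - w_b\,\#\mathrm{Aut}_b$ give $-1+2=1$ as claimed.
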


\begin{proof}
By  Corollary \ref{Cor of cancellation of the correction term},  we have
\begin{align*}
\overline{z}^3_2(\overline{\psi}) =  I(H)(\psi).
 \end{align*}
Observe that the underlining graph of \graphd{} has 4 automorphisms, and the underlining graph of \graphe{} has 2 automorphisms. 
In the graph cocycle $H$, $w(\graphd{})= -\frac{1}{4}$ and $w(\graphe{})= -1$.
Hence by Counting formula \ref{Counting formula} and Example \ref{Example of counting formula},
\begin{align*}
I(H)(\psi) &= -\sum_{\overline{\Gamma} \in G(C_2)} w(\Gamma) \# \text{Aut}(\overline{\Gamma})  s(\Gamma, \overline{\Gamma})
= -(-\frac{1}{4} \times 4 \times (-1)+ (-1)\times 2 \times (+1+1-1)) = 1.
\end{align*}
\end{proof}

\begin{rem}
We can give a simpler proof of Theorem \ref{pairing 2} using a graph homology \cite{SW}. We give a sketch of the proof here.
Let $\mathcal{A}(k, g)$ be the graph homology generated by $g$-loop BCR graphs of order~$k$.
Let $\overline{w}: \mathcal{A}(k, g) \rightarrow \mathbb{R}$ be a weight system on it. Then
\[
H = \sum_{\substack{\overline{\Gamma} \\ k(\overline{\Gamma}) = k, g(\overline{\Gamma}) = g}} \frac{\overline{w}(\Gamma)}  {\# \text{Aut}(\overline{\Gamma})}\Gamma
\]
is a graph cocycle. Set $w(\Gamma) = \frac{\overline{w}(\Gamma)}  {\# \text{Aut}(\overline{\Gamma)}}$.
Then the counting formula \ref{Counting formula} is
\begin{equation*}
 I(H)(\psi) = (-1)^{r(C)} \sum_{\overline{\Gamma} \in G(C)}  \overline{w}(\Gamma)  s(\Gamma, \overline{\Gamma}).
\end{equation*}
Consider the weight system $\overline{w}: \mathcal{A}(3, 2) \rightarrow \mathbb{R}$ such that $\overline{w}(\graphg) = \pm 1$.
Observe that the linear combination of four graphs $\sum_{\overline{\Gamma} \in G(C_2)}  s(\Gamma, \overline{\Gamma}) \Gamma $
gives the same class as the hairy graph $\pm \graphg \in \mathcal{A}(3, 2)$. (To see this, use STU relations.) Then we have
\[
 I(H)(\psi) = (-1)^{r(C_2)} \ \overline{w} \left(\sum_{\overline{\Gamma} \in G(C_2)} s(\Gamma, \overline{\Gamma}) \Gamma \right) = \pm \overline{w}(\graphg) \neq 0 .
\]
\end{rem}

Finally, we prove Corollary \ref{main corollary}, the case $n-j-2=0$. Let 
\[
\overline{\psi}(0,0,0) : S^{j-1} \longrightarrow \overline{\text{Emb}}(\mathbb{R}^j, \mathbb{R}^{j+2})
\]
be the $(0,0, 0)\in S^0\times S^0 \times S^0$ component of $\overline{\psi}$, that is, the component where no crossing is resolved. By Proposition \ref{unknot component}, $\overline{\psi}(0,0,0)$ is in the unknot component (the path component of the trivial path of the trivial immersion).

\begin{cor}
When $n-j = 2$, $\overline{\psi}$ and $\overline{\psi}(0,0,0)$ give the same homology class. Hence, when $j$ is odd, $\overline{\psi}$ gives an non-trivial element of
\[
\pi_{\ast}(\overline{\text{Emb}}(\mathbb{R}^j, \mathbb{R}^{j+2})_{\iota}) \otimes \mathbb{Q}.
\]
\end{cor}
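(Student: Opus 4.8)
The plan is to reduce the assertion to the pairing value $\overline{z}^3_2(\overline{\psi}) = 1$ of Theorem \ref{pairing 2}, by concentrating the cycle $\overline{\psi}$ onto its single component $\overline{\psi}(0,0,0)$ and then passing to homotopy via the Hurewicz map. Because $n - j = 2$ forces $n-j-2 = 0$, the parameter space of $\overline{\psi}$ is $(S^{j-1})^{g-1}\times(S^{n-j-2})^{k} = S^{j-1}\times(S^0)^3$ (here $g=2$, $k=3$), an oriented $(j-1)$-manifold that is a disjoint union of $2^3$ copies of $S^{j-1}$ indexed by $\epsilon\in\{0,2\}^3$; the value $\epsilon_i=0$ is the unperturbed band $B(0)$ (the $i$-th crossing present) and $\epsilon_i=2$ is the pushed-off band $B(2)$ (the $i$-th crossing resolved). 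Writing $\overline{\psi}(\epsilon)$ for the restriction to the copy $S^{j-1}\times\{\epsilon\}$, the K\"unneth decomposition of the fundamental class gives $[\overline{\psi}] = \sum_{\epsilon}\mathrm{sgn}(\epsilon)\,[\overline{\psi}(\epsilon)]$ in $H_{j-1}(\overline{\text{Emb}}(\mathbb{R}^j,\mathbb{R}^{j+2}))$, with $\mathrm{sgn}(0,0,0)=+1$.

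First I would establish the homology equality $[\overline{\psi}]=[\overline{\psi}(0,0,0)]$ by proving that every partially resolved component is null-homotopic, i.e. $[\overline{\psi}(\epsilon)]=0$ whenever $\epsilon\neq(0,0,0)$. If $\epsilon_i=2$, the band $B_i$ is lifted entirely off the disk $D_i$, so the ribbon presentation of $\overline{\psi}(\epsilon)$ has one fewer crossing; applying the moves $S1$ and $S4$ as in Proposition \ref{unknot component} untangles it, and — this is the essential point — the residual rotation $\theta'\in S^{j-1}$ coming from the orbit-orbit pairing can then be swept away, because the linking that obstructed its contraction has disappeared. Since $\mathrm{sgn}(0,0,0)=+1$, only the all-present term survives and the equality follows.

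Next I would extract nontriviality. As $\overline{z}^3_2$ is a closed form, its pairing with a cycle depends only on the homology class of that cycle; combining the equality just proved with Theorem \ref{pairing 2} gives $\overline{z}^3_2(\overline{\psi}(0,0,0)) = \overline{z}^3_2(\overline{\psi}) = 1 \neq 0$. For $j$ odd the dimension $n=j+2$ is also odd, so $H$ is a bona fide graph cocycle and $\overline{z}^3_2$ is a well-defined closed form; by Proposition \ref{unknot component}, $\overline{\psi}(0,0,0)$ lands in the trivial component $\overline{\text{Emb}}(\mathbb{R}^j,\mathbb{R}^{j+2})_{\iota}$. Thus $\overline{\psi}(0,0,0)\colon S^{j-1}\to\overline{\text{Emb}}(\mathbb{R}^j,\mathbb{R}^{j+2})_{\iota}$ satisfies $\int_{S^{j-1}}\overline{\psi}(0,0,0)^{\ast}\overline{z}^3_2\neq 0$, so its image under the Hurewicz homomorphism is nonzero in $H_{j-1}(\overline{\text{Emb}}(\mathbb{R}^j,\mathbb{R}^{j+2})_{\iota};\mathbb{R})$; by naturality of the rational Hurewicz map $\pi_{j-1}\otimes\mathbb{Q}\to H_{j-1}(-;\mathbb{Q})$ the class of $\overline{\psi}(0,0,0)$ is nonzero in $\pi_{j-1}(\overline{\text{Emb}}(\mathbb{R}^j,\mathbb{R}^{j+2})_{\iota})\otimes\mathbb{Q}$, as required.

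\textbf{The main obstacle} is the parametrized null-homotopy of the partially resolved families $\overline{\psi}(\epsilon)$, $\epsilon\neq(0,0,0)$. Every fiber of $\overline{\psi}(0,0,0)$ is already the unknot by Proposition \ref{unknot component}, so all the content lies in the family direction: one must contract the rotation $\theta'$ uniformly over $S^{j-1}$ after a single crossing is deleted, and check that the antipodal short-cut and $x_{j+2}$-perturbation introduced in Step 3 of the construction do not obstruct this. If a clean parametrized null-homotopy proves elusive, I would fall back to the weaker conclusion needed for nontriviality, namely $\overline{z}^3_2(\overline{\psi}(\epsilon))=0$ for $\epsilon\neq(0,0,0)$: by the counting formula (Theorem \ref{key lemma}) a resolved crossing contributes an \emph{unlinked} pair in the linking-number step, so that term vanishes, and the signed sum $\overline{z}^3_2(\overline{\psi})=\sum_\epsilon\mathrm{sgn}(\epsilon)\,\overline{z}^3_2(\overline{\psi}(\epsilon))$ again collapses to the $(0,0,0)$ term.
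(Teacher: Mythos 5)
Your proposal follows essentially the same route as the paper: decompose $\overline{\psi}$ over the $(S^0)^3$ factor into $2^3$ components $\overline{\psi}(\varepsilon_1,\varepsilon_2,\varepsilon_3)$ and observe that every component with some $\varepsilon_i=2$ is degenerate (null-homologous), so only $\overline{\psi}(0,0,0)$ survives and inherits the nonzero pairing of Theorem \ref{pairing 2}. The paper's own proof is in fact terser than yours --- it simply asserts that the partially resolved components are degenerate cycles --- so the parametrized null-homotopy you flag as the main obstacle, together with your fallback via vanishing of the pairing on those components, is a reasonable filling-in of the same argument rather than a departure from it.
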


\begin{proof}
Since $S_0 = \{x_3=0, x_3=2\}$, we write
\[
\overline{\psi} = \sum_{\epsilon_i \in \{0,2\}} \overline{\psi}(\varepsilon_1,\varepsilon_2,\varepsilon_3)
\]
Observe that if one of $\varepsilon_i$ is $2$, $\overline{\psi}(\varepsilon_1,\varepsilon_2,\varepsilon_3)$ is a degenerate cycle. 
\end{proof}

\newpage

\end{document}